\documentclass[reqno,10pt,a4paper]{article}
\usepackage{amsmath}
\usepackage{amsthm}
\usepackage{amsfonts}
\usepackage{amssymb}
\usepackage{bbm}
\usepackage{indentfirst}
\usepackage{marginnote}
\usepackage{xcolor}
\usepackage[left=3cm,top=3cm,right=3cm,bottom=3cm]{geometry}
\usepackage{color}
\usepackage[colorlinks=true, linkcolor=blue, citecolor=blue]{hyperref}

\newtheorem{thm}{Theorem}[section]
\newtheorem{lem}[thm]{Lemma}
\newtheorem{co}[thm]{Corollary}
\newtheorem{pr}[thm]{Proposition}
\newtheorem{re}[thm]{Remark}
\numberwithin{equation}{section}

\begin{document}
	\title{Central limit theorem and Berry-Esseen bounds for a branching random walk with immigration in a random environment}
	\author{Chunmao Huang\footnote{Corresponding author  \newline \indent \ \ Email addresses: cmhuang@hitwh.edu.cn (Chunmao Huang), renyukun@mail.preprego.com (Yukun Ren), lrz2000hitwh@163.com (Runze Li).}, \ \ Yukun Ren, \ \ Runze Li
	\\
	\small{\emph{Harbin Institute of Technology (Weihai), Department of Mathematics, 264209, Weihai, China}}}
	\date{}
	\maketitle
	
	\begin{abstract}
		We consider a branching random walk on $d$-dimensional real space  with immigration in a time-dependent random environment. Let $Z_n(\mathbf t)$ be the so-called partition function of the process, namely, the moment generating  function of the counting measure describing the dispersion of individuals at time $n$. For $\mathbf t$ fixed, the logarithm   $\log Z_n(\mathbf t)$ satisfies a central limit theorem. By studying the logarithmic moments of the intrinsic submartingale of the system and its convergence rates,  we establish the uniform and non-uniform Berry-Esseen bounds corresponding to the central limit theorem, and   discover the exact convergence rate in the central limit theorem. \\
		
		\emph{MSC subject classifications.}  60J80, 60K37, 60F05.

\emph{Key words:} branching random walk with immigration; random environment; central limit theorem; Berry-Esseen bound
	\end{abstract}

	\section{Introduction}\label{BS1}
Branching random walk is an important field of stochastic processes. There are many achievements in the literature: see e.g. \cite{B19771, B19772, B1997, L1997}. Unlike the processes having a fixed  distribution for all particles in the classical branching random walk, the reproduction distributions may change with the time or the space location in reality. As an important extension, branching random walks in random environments  attract  extensive attention and have many developments. The random environments in time or in space were considered, and a lot of valuable topics were studied, involving   limit theorems, convergence of martingales, asymptotic behaviours; see e.g. \cite{greven, BCG93, bk04, C2007, Y2008, HL2014, GL2014, WH17, HW2020, XK2021}.
In this paper, we are interested in the time-dependent random environment which   is a stochastic process varying from generation to generation. 
The model of branching random walk  in time-dependent random environment (BRWRE)  was first proposed by  Biggins and Kyprianou \cite{bk04}, and it  is also proposed by Kuhlbusch \cite{k04},  called the  weighted branching process  in random environment. In this model,  reproduction distributions differ  from generation to generation according to a random environment varying with the time, and the limit behaviours of the system will be affected by the environment; see e.g. \cite{bk04, k04, HL2014, GL2014, WH17, GL2018, ZH22} for   related studies. Recent years, branching processes with immigration have made a lot of achievements; see e.g. \cite{CLR12, wl17, DL20, LVZ21, LL21, hww22}. These research indicate that not only the environment, but also the immigration will influence the asymptotic properties of the branching system. In order to characterize the cross effect of environment and immigration on branching random walks, we consider the branching random walk with immigration in time-dependent random environment (BRWIRE). As a combination of BRWRE and branching process with immigration, this model extends largely the classical branching process and branching random walk, and has a wide application. However, due to the   complexity of the model, there are  few relevant results on this subject, which limits   many practical applications.

 In this paper, we  are interested in  BRWIRE and will focus on the central limit theorem associated to the partition function of the process.  When  all the  particles stay  at the origin, BRWIRE degenerates to a branching process with immigration in random environment (BPIRE), and the  partition function  at time $n$ denotes  the population size of the $n$-th generation. For BPIRE, Wang and Liu  established a central limit theorem associated to  the population size in  \cite{wl17}, and then  showed the corresponding Berry-Esseen bound in \cite{WL21}. Meanwhile, for a branching process without immigration in random environment (BPRE), 
 the central limit theorem was established by Huang and Liu \cite{hl12}, the Berry-Esseen bound was shown  by Grama \emph{et al.}  \cite{glm17}, and 
 Gao \cite{G21} discovered the exact convergence rate in the central limit theorem. With our in-depth study for the case with immigration, we find that the moment conditions of these results can be completed or improved, and  it is also possible to obtain more profound conclusions. These studies and findings urge us to further explore the central limit theorem for BRWIRE. The purpose of the paper is to study the  central limit theorem associated to the partition function for BRWIRE, by  discovering  its speed of convergence in the form of the   (uniform and non-uniform) Berry-Esseen bounds as well as its exact convergence  rate. We want to extend the existing results on BPRE or BPIRE to BRWIRE while improving the conclusions and the moment conditions. In terms of research methods,    we shall adopt  similar approaches to  BPIRE, since BRWIRE and BPIRE have similar structure. The basic idea is that the logarithm of the partition function will satisfy similar limit theorems with similar convergence rates to the associated random walk  if  the intrinsic submartingale of the process converges  as fast as possible. However, 
 as BRWIRE contains the information of particle locations, many techniques applicable to BPIRE are no longer available for BRWIRE. So we also need to invent  new solutions based on the basic idea, especially in the study of convergence rates of the  intrinsic submartingale.

 \section{Model and main results}\label{BS11}
	
Let us describe the model in detail.  Here we consider \emph{a branching random walk with immigration in a time-dependent random environment (BRWIRE)}, which is an extension of the so-called branching random walk in a time-dependent random environment (BRWRE) by introducing the immigration.
The random environment, denoted by $\xi=(\xi_n)$,  is an independent identically distributed (i.i.d.) sequence of random variables indexed by the  time $n\in\mathbb N=\{0,1,2,\cdots\}$. Each realization of $\xi_n$ corresponds to two distributions $\eta_n=\eta(\xi_n)$ and $\iota_{n}=\iota(\xi_n)$ on $\mathbb{N}\times(\mathbb{R}^d)^{\mathbb N}$: $\eta_n$ determines the branching distribution  and $\iota_n$ determines the immigration distribution at time $n$.

When the environment $\xi=(\xi_n)$ is given, the branching random walk is performed  on the $d$-dimensional real space $\mathbb R^d$. At time $0$, there is an initial particle $\emptyset$ of Generation $0$ located at $\mathbf S_{\emptyset}=\mathbf 0\in\mathbb{R}^d$.
At time $1$, this particle is replaced by  $N=N({\emptyset})$ new born particles, located at $\mathbf L_i=\mathbf L_i({\emptyset })$, $i=1,2,\cdots,  N$, where the random vector
$\mathcal  X({\emptyset})=(N, \mathbf L_1, \mathbf L_{2},\cdots)$ is of distribution $\eta_0$.  At the same time, $V_0$ new immigrant particles  join the family,  with locations $\mathbf S_{0_0i}$, $i=1,2,\cdots, V_0$, where the random vector $\mathcal Y_0=(V_0, \mathbf S_{0_01}, \mathbf S_{0_02},\cdots)$ is of distribution $\iota_0$. All the new born particles and the new immigrant particles form   Generation $1$  of the system. Then, each particle of Generation $1$ produces at time $2$  a random number of offspring and some new  immigrants   come and join the family  at the same time, which form  Generation $2$ of the system. In general, each particle $u$ of Generation $n$ located at $\mathbf S_u$ is replaced at time $n+1$ by $N(u)$ new born particles $ui$ of generation $n+1$, $i=1,2,\cdots, N(u)$,  where $ui$ denotes the $i$-th child of $u$; the location of $ui$ satisfies
$$\mathbf S_{ui}=\mathbf S_u+\mathbf L_i(u),\qquad   i=1,2,\cdots,  N(u), $$
where the  random vector $\mathcal X(u)=(N(u),\mathbf L_1(u), \mathbf L_2(u),\cdots)$ is of distribution $\eta_n$; at the same time, $V_n$ new  immigrants   join the family, located at $\mathbf S_{0_ni}$, $i=1,2,\cdots,  V_n$, where the random vector $\mathcal Y_n=(V_n, \mathbf S_{0_n1}, \mathbf S_{0_n2},\cdots)$ is of distribution $\iota_n$. All the new born particles $ui$, $i=1,2\cdots, N(u)$, and all the new immigrant particles $0_n i$, $i=1,2,\cdots, V_n$, form Generation $n+1$ of the family.
Conditioned on the environment $\xi$, 
the random vectors $\mathcal  X(u)$ indexed by all the sequences $u$ and $\mathcal  Y_n$ indexed by $n$  are  conditionally independent of each other. 

Let $\mathbb{U}_X=\cup_{n\geq 1}\mathbb{N^*}^n\cup\{\emptyset\}$, $\mathbb U_Y=\cup_{n\geq 0}\{0_nu: u\in\cup_{n\geq 1}\mathbb{N^*}^n\}$ and $\mathbb{U}=\mathbb{U}_X\cup\mathbb U_Y$, where $\mathbb N^*=\{1,2,\cdots\}$.  For $u=u_1\cdots u_n\in\mathbb{U}$, we write $|u|$ for the length of $u$, with the convention that   $|\emptyset|=0$ and $|0_n|=n$. Let $\mathbb{T}$ be the family tree  with defining elements $\{N(u)\}_{u\in\mathbb{U}}$ and
$\mathbb{T}_n=\{u\in\mathbb{T}: |u|=n\}$ be the set of particles of Generation $n$ of the family. For $n\in\mathbb N$, define
$$Z_n(\mathbf t)=\sum_{u\in\mathbb{T}_n}e^{\mathbf t\mathbf S_u}\qquad (\mathbf t\in\mathbb R^d).$$
Here  and throughout the paper the product $\mathbf x \mathbf y$ denotes the inner product of $\mathbf x$ and $\mathbf y\in \mathbb R^d$.
Called by Physicians the \emph{partition function}, $Z_n(\mathbf t)$ is the moment generating function of  the measure which describes the dispersion of particles in Generation $n$.   In particular, when $\mathbf t=\mathbf 0$, $Z_n(\mathbf 0)$ is the population size of Generation $n$, and it forms a branching process with immigration in a random environment (BPIRE); moreover, if $V_n=0$ for all $n$, there is no immigration and $Z_n(\mathbf 0)$ forms a  branching process (without immigration) in a random environment (BPRE).  In this paper,  we  are interested in the central limit associated to $\log Z_n(\mathbf t)$,  as well as  its convergence rates, for $\mathbf t\in\mathbb R^d$ fixed.

Denote by $\mathbb{P}_\xi$ the so-called quenched law, i.e. the conditional
probability  when the environment $\xi$ is given. The total probability can be expressed as $\mathbb P(\mathrm{d}x, \mathrm{d}\xi)=\mathbb P_\xi(\mathrm{d}x)\tau(\mathrm{d}\xi)$, with $\tau$ the distribution of the environment $\xi$; it is usually called the annealed law. We also use $\mathbb P_{\xi,Y}$ to denote the  conditional probability  when both the environment $\xi$ and the immigrant sequence $Y=(\mathcal Y_n)$ are given. The expectations with respect to $\mathbb{P}_{\xi,Y}$, $\mathbb{P}_\xi$ and $\mathbb P $  will be denoted respectively by $\mathbb E_{\xi,Y}$, $\mathbb E_\xi $ and $\mathbb E$.

For $n\in\mathbb N$ and $\mathbf t\in\mathbb R^d$, set
$$Y_n(\mathbf t)=\sum_{i=1}^{V_n}e^{\mathbf t\mathbf S_{0_ni}}, \qquad
m_n(\mathbf t)=\mathbb E_\xi\sum_{i=1}^{N(u)}e^{\mathbf t\mathbf L_i(u)}\;\;\;\;(u\in\mathbb {N^*}^n),$$
$$\Pi_0(\mathbf t)=1  \qquad \text{and}\qquad \Pi_n(\mathbf t)=\prod_{i=0}^{n-1}m_i(\mathbf t)\quad (n\geq 1).
$$
It is known that the normalization 
\begin{equation*}
	W_n(\mathbf t)=\frac{ Z_n(\mathbf t)}{\Pi_n(\mathbf t)} 
\end{equation*}
is the intrinsic non-negative sub-martingale of the system. Let $\mathbb{T}^X$ be the family tree  with defining elements $\{N(u)\}_{u\in\mathbb{U}_X}$ and
$\mathbb{T}^X_n=\{u\in\mathbb{T}^X: |u|=n\}$ be the set of particles of Generation $n$ originating from the initial particle $\emptyset$. For $n\in\mathbb N$ and $\mathbf t\in\mathbb R^d$, set
\begin{equation*}
	\bar Z_n(\mathbf t)=\sum_{u\in\mathbb T^X_n}e^{\mathbf t\mathbf S_u}\qquad\text{and}\qquad \bar W_n(\mathbf t)=\frac{\bar Z_n(\mathbf t)}{\Pi_n(\mathbf t)}.
\end{equation*}
Then $\bar W_n(\mathbf t)$ is the so-called Biggins  martingale in  branching random walks, and hence it converges almost surely  (a.s.) to some limit $\bar W(\mathbf t)$ with $\mathbb E_\xi\bar W(\mathbf t)\leq 1$.

We will focus on the central limit theorem on $\log Z_n(\mathbf t)$ for $\mathbf t\in\mathbb R^d$ fixed. For the sake of brevity, we will omit $\mathbf t$  in some notation in the following description, such as writing  $Z_n=Z_n(\mathbf t)$, $W_n=W_n(\mathbf t)$ and $Y_n(\mathbf t)=Y_n$.
For $\mathbf t\in\mathbb R^d$, denote
$$\mu=\mu(\mathbf t)=\mathbb E\log m_0(\mathbf t)\quad\text{and}
\quad \sigma=\sigma(\mathbf t)=\sqrt{Var(\log m_0(\mathbf t))}$$
if the expectations exist. Throughout the paper, we assume that 
$$\mathbb P(N=0)=1,$$
which means that each particle produces at least one offspring. If $\mu>0$, it is known that  the submartingale $W_n$ converges a.s.  to  a non-degenerate limit under appropriate moment conditions.

\begin{lem}[\cite{hwl22+, WH19}, Convergence of $W_n$]\label{a.s.}
If $\mu>0$,  
$\mathbb E\log^+\frac{Y_0}{m_0(\mathbf t)}<\infty$, $\mathbb E\log \frac{m_0(p\mathbf t)}{m_0(\mathbf t)^p}<0$ and $\mathbb E\log \mathbb E_\xi \bar W_1^p<\infty$ for some $p>1$, then
 the limit
\begin{equation*}
W=\lim_{n\rightarrow\infty}W_n
\end{equation*}
exists a.s. with values in $(0,\infty)$.
\end{lem}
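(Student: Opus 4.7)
The plan is to split $W_n$ into the Biggins martingale $\bar W_n$ of the ancestral subtree rooted at $\emptyset$ and the successive immigrant contributions, to handle the former by Biggins' theorem in random environment, and to control the latter by a submartingale argument performed conditionally on both the environment $\xi$ and the immigrant sequence $Y=(\mathcal Y_n)$.

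First I would classify each particle of $\mathbb T_n$ by its oldest ancestor (either $\emptyset$ or some immigrant $0_ki$ with $k\le n-1$) to obtain the decomposition
$$W_n=\bar W_n+\sum_{k=0}^{n-1}\frac{1}{\Pi_{k+1}(\mathbf t)}\sum_{i=1}^{V_k}e^{\mathbf t\mathbf S_{0_ki}}\bar W_{n-k-1}^{(k,i)},$$
where $\bar W_j^{(k,i)}$ denotes the Biggins martingale, evaluated at generation $j$ in the environment $(\xi_{k+1},\xi_{k+2},\dots)$, of the independent branching random walk subtree issuing from the immigrant $0_ki$. In particular $W_n\ge\bar W_n$. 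A direct conditional computation at step $n$ then shows that under $\mathbb P_{\xi,Y}$ the process $(W_n)$ is a non-negative submartingale with one-step increment
$$\mathbb E_{\xi,Y}[W_{n+1}\mid\mathcal F_n]=W_n+\frac{Y_n}{\Pi_{n+1}(\mathbf t)},$$
where $\mathcal F_n$ is the natural filtration up to generation $n$, giving the closed form $\mathbb E_{\xi,Y}W_n=1+\sum_{k=0}^{n-1}Y_k/\Pi_{k+1}(\mathbf t)$.

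The key analytic step is then to verify the almost-sure convergence of the series $\sum_{k\ge 0}Y_k/\Pi_{k+1}(\mathbf t)$. Setting $U_k:=\log(Y_k/m_k(\mathbf t))$, the variables $U_k$ are i.i.d.\ with $\mathbb E U_k^+<\infty$ by the hypothesis $\mathbb E\log^+(Y_0/m_0(\mathbf t))<\infty$; a standard tail-sum Borel--Cantelli argument then forces $U_k\le k\mu/2$ for all large $k$, while the strong law of large numbers yields $k^{-1}\sum_{j=0}^{k-1}\log m_j(\mathbf t)\to\mu>0$. Combining the two gives $Y_k/\Pi_{k+1}(\mathbf t)\le e^{-k\mu/4}$ eventually, hence a.s.\ summability. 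Consequently $(W_n)$ is an $L^1(\mathbb P_{\xi,Y})$-bounded non-negative submartingale and Doob's theorem yields the existence of a finite a.s.\ limit $W$.

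Finally, the positivity $W>0$ a.s.\ reduces, via $W\ge\bar W$, to showing $\bar W>0$, which is Biggins' theorem in random environment: the assumptions $\mathbb E\log(m_0(p\mathbf t)/m_0(\mathbf t)^p)<0$ and $\mathbb E\log\mathbb E_\xi\bar W_1^p<\infty$ are the $L^p$ integrability conditions that ensure $\bar W_n\to\bar W$ a.s.\ and in $L^p$ with $\mathbb E_\xi\bar W=1$, whence $\bar W>0$ a.s. The main technical hurdle is the summability step, where the moment condition on $Y_0/m_0(\mathbf t)$ must be balanced against the positive drift $\mu>0$ to dominate the random numerators $Y_k$; the positivity of $\bar W$ itself is the classical $L^p$ Biggins' result already developed in the cited references \cite{hwl22+,WH19}, and so is not a new obstacle.
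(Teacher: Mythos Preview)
The paper does not supply its own proof of this lemma: it is quoted from the references \cite{hwl22+, WH19} and used as a black box. So there is no in-paper argument to compare against.

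Your reconstruction is correct and is essentially the standard route. The decomposition you write down coincides (after reindexing $j=k+1$) with the one the paper records later as \eqref{CRE2.2.23} and uses in the proof of Lemma~\ref{s3l01}, so you are in line with the paper's framework. The submartingale computation under $\mathbb P_{\xi,Y}$, the a.s.\ summability of $\sum_k Y_k/\Pi_{k+1}(\mathbf t)$ via $\mathbb E\log^+(Y_0/m_0(\mathbf t))<\infty$ together with the SLLN for $\log\Pi_k(\mathbf t)$, and the reduction of positivity to $\bar W>0$ via $W\ge\bar W$ are all sound. Two minor remarks: (i) to assert that the $U_k$ are i.i.d.\ you should note explicitly that the pairs $(\xi_k,\mathcal Y_k)$ are i.i.d., which follows from the model; (ii) the log-moment hypotheses on $m_0(p\mathbf t)/m_0(\mathbf t)^p$ and $\mathbb E_\xi\bar W_1^p$ yield $L^p(\mathbb P_\xi)$-boundedness of $(\bar W_n)$ for a.e.\ $\xi$ (not $L^p(\mathbb P)$), which is exactly what is needed to get $\mathbb E_\xi\bar W=1$ a.s.\ and hence, using $\mathbb P(N\ge1)=1$, that $\bar W>0$ a.s.
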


When the limit $W\in(0,\infty)$ exists a.s.,  we can establish a central limit theorem on $\log Z_n$. Set 
$$S_n=\log \Pi_n(\mathbf t)-n \mu=\sum_{i=0}^{n-1}(\log m_i(\mathbf t)-\mu).$$
It is clear that $S_n$ is a centred random walk with i.i.d. displacements $\{\log m_n(\mathbf t)-\mu\}$. Hence it satisfies the classical central limit theorem for  i.i.d. sequence, which says that as $n$ tends to infinity, 
$$\frac{S_n}{\sqrt{n}\sigma}\longrightarrow U\in\mathcal N(0,1)\quad \text{in distribution,}$$
if $\sigma\in(0,\infty)$.
Notice that 
$$\frac{\log Z_n-n\mu}{\sqrt{n}\sigma}=\frac{S_n}{\sqrt{n}\sigma}+\frac{\log W_n}{\sqrt{n}\sigma}.$$
If $W_n$ tends to $W\in(0,\infty)$ a.s., we have $\frac{\log W_n}{\sqrt{n}\sigma}\to 0$ a.s., so that
$$\frac{\log Z_n-n\mu}{\sqrt{n}\sigma}\longrightarrow U\in\mathcal N(0,1)\quad \text{in distribution.}$$
We state this central limit theorem as follows.	
	
		\begin{thm}[Central limit theorem]\label{tt1}
		If $\mu, \sigma\in(0,\infty)$,  
$\mathbb E\log^+\frac{Y_0}{m_0(\mathbf t)}<\infty$, $\mathbb E\log \frac{m_0(p\mathbf t)}{m_0(\mathbf t)^p}<0$ and $\mathbb E\log \mathbb E_\xi \bar W_1^p<\infty$ for some $p>1$, then 
		\begin{equation*}\label{te1}
			\lim_{n \to \infty}\sup_{x\in \mathbb R}\left | \mathbb{P}\left ( \frac{ \log{Z _{n}  }  -n\mu  }{\sqrt{n} \sigma  }\le x \right ) -\Phi \left ( x \right )  \right |=0,
		\end{equation*}
		where $\Phi(x)=\frac{1}{\sqrt 2\pi}\int_{-\infty}^x e^{-s^2/2}\mathrm{d}s$ is the standard normal distribution function. 
	\end{thm}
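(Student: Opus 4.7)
The plan is essentially to exploit the decomposition the authors already flagged:
\begin{equation*}
\frac{\log Z_n-n\mu}{\sqrt{n}\sigma}=\frac{S_n}{\sqrt{n}\sigma}+\frac{\log W_n}{\sqrt{n}\sigma},
\end{equation*}
and treat the two terms separately. For the first term, $S_n=\sum_{i=0}^{n-1}(\log m_i(\mathbf t)-\mu)$ is a centred random walk with i.i.d.\ increments; the hypothesis $\sigma\in(0,\infty)$ makes the variance finite and nonzero, so the classical Lindeberg--L\'evy CLT gives $S_n/(\sqrt n\sigma)\Rightarrow U\sim\mathcal N(0,1)$, together with the (Pólya) uniform convergence
\begin{equation*}
\sup_{x\in\mathbb R}\left|\mathbb P\!\left(\frac{S_n}{\sqrt n\sigma}\le x\right)-\Phi(x)\right|\longrightarrow 0,
\end{equation*}
since $\Phi$ is continuous.

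For the second term, I would invoke Lemma~\ref{a.s.} directly: the four moment hypotheses on $\mu$, $Y_0$, $m_0(p\mathbf t)$ and $\bar W_1$ are exactly those of that lemma, so $W_n\to W$ a.s.\ with $W\in(0,\infty)$. Consequently $\log W_n\to\log W$ a.s.\ is a.s.\ finite, which yields
\begin{equation*}
\frac{\log W_n}{\sqrt n\sigma}\longrightarrow 0 \qquad \text{a.s.,}
\end{equation*}
and in particular in probability.

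To finish, I would combine the two pieces via Slutsky's theorem to conclude that $(\log Z_n-n\mu)/(\sqrt n\sigma)$ converges in distribution to a standard normal. Uniform convergence of the distribution functions then follows once again from Pólya's theorem, because the limiting distribution function $\Phi$ is continuous on $\mathbb R$; this upgrades the pointwise convergence to the supremum statement claimed.

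Honestly, this CLT is the ``soft'' result of the paper and there is no real obstacle: the heavy lifting has been outsourced to Lemma~\ref{a.s.}, which guarantees that the remainder $\log W_n/(\sqrt n\sigma)$ is negligible. The genuine difficulty of the paper lies elsewhere --- namely in quantifying how fast $\log W_n$ stabilises, in order to produce the Berry--Esseen bounds and the exact convergence rate announced in the abstract --- but for the qualitative CLT stated here, the Slutsky/Pólya argument above is all that is needed.
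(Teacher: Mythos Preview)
Your proposal is correct and mirrors the paper's own argument almost verbatim: the paper proves Theorem~\ref{tt1} inline just before stating it, via the same decomposition $\frac{\log Z_n-n\mu}{\sqrt n\sigma}=\frac{S_n}{\sqrt n\sigma}+\frac{\log W_n}{\sqrt n\sigma}$, the classical CLT for $S_n$, and Lemma~\ref{a.s.} to kill the second term. Your only additions are the explicit invocations of Slutsky and P\'olya, which the paper leaves implicit.
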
	

We are interested in the convergence rates in this central limit theorem. In order to show these rates, 
 we introduce the following assumption:\\*\\ 
 $\mathbf{(H)}$ There exist constants $\varepsilon>0$ and $p>1$ such that
$$\mathbb E\left(\frac{m_0(p\mathbf t)}{m_0(\mathbf t)^p}\right)^\varepsilon<1\quad\text{and}\quad \mathbb E\left(\mathbb E_\xi\bar W_1^p\right)^\varepsilon<\infty.$$
This moment assumption  is used to ensure the existence of  the moments or logarithmic moments of $  W_n$ such that   $W_n$ converges with a fast rate, if $\varepsilon$ and $p$ satisfy certain restriction. In particular, for branching processes, namely, when $\mathbf t=\mathbf 0$, we mention that we just need (H) for $\varepsilon>0$ and $p>1$ without additional restrictions; about the existence of  moments  for BPRE or BPIRE, see \cite{huang14, glm17, HZG} for   positive moments, harmonic moments and logarithmic moments of $W_n$ respectively.

In Theorem \ref{tt1}, the condition $\sigma<\infty$ implies that the moment condition on $\log m_0(\mathbf t)$ is  $\mathbb E(\log m_0(\mathbf t))^a<\infty$ for $a=2$  at least. If a moment with   higher order  exists, such as $\mathbb E(\log m_0(\mathbf t))^a<\infty$ for  some $a>2$, it is known that the central limit theorem on $S_n$ will converge  very fast, with a polynomial rate $n^{-\delta/2}$ for  $\delta=\min\{a-2,1\}$. Such result is called the Berry-Esseen bound or Berry-Esseen inequality. When  (H) holds for certain  $\varepsilon$ and $p$, one may expect that $W_n$ converges very fast such that a similar result also holds for $\log Z_n$.

\begin{thm}[Uniform Berry-Esseen bound]\label{tt2}
Assume that $\sigma>0$, $\mathbb{E}(\log m_0(\mathbf t))^{a} < \infty$  for some $a>2$, $\mathbb E m_0(\mathbf t)^{-\alpha}<1$ and  $\mathbb E \left (\frac{Y_0}{m_0(\mathbf t)}\right)^\alpha<\infty$ for some $\alpha>0$.   Suppose that either of the following two conditions holds:
\begin{itemize}
\item[$\mathrm(i)$]$\mathbb E(\frac{e^{\mathbf t\mathbf L_1}}{m_0(\mathbf t)})^{-\alpha}<\infty$ and  $ {(H)}$ is true for some $\varepsilon >0$ and $p>\max\{1+\frac{1}{\varepsilon},\frac{2}{\varepsilon}\}$;
\item[$\mathrm(ii)$]$\mathbb E \left | \mathbf t \mathbf L_1 \right | ^a<\infty$ and $(H)$ is true for some $\varepsilon>0$, $p > \max\{1+\frac{a}{(a-r)\varepsilon},\frac{2a}{(a-r)\varepsilon}\}$ and $r\in(0,a)$.
\end{itemize}
Set  $\delta=\min\{a-2,1\}$ if (i) is satisfied, and $\delta=\min\{a-2,r,1\}$ if  (ii) is   satisfied.
Then
\begin{equation}\label{bs}
\sup_{x\in \mathbb R}\left | \mathbb{P}\left ( \frac{ \log_{}{Z _{n}  }  -n\mu   }{\sqrt{n} \sigma  }\le x \right ) -\Phi \left ( x \right )  \right |\le Cn^{-{\delta }/{2} },
	\end{equation}	
	where   $C>0$ is a  constant   depending on $\mathbf t$.
\end{thm}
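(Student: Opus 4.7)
The strategy is to decompose
\[
\frac{\log Z_n-n\mu}{\sqrt n\,\sigma}=\frac{S_n}{\sqrt n\,\sigma}+\frac{\log W_n}{\sqrt n\,\sigma},
\]
apply the classical Esseen inequality to the centred i.i.d.\ random walk $S_n$, and show that the multiplicative correction $\log W_n/(\sqrt n\,\sigma)$ affects the Kolmogorov distance to $\Phi$ by at most $O(n^{-\delta/2})$. Since $\mathbb E(\log m_0(\mathbf t))^a<\infty$ with $a>2$, the classical Esseen inequality already produces
\[
\sup_x \bigl|\mathbb P(S_n/(\sqrt n\,\sigma)\le x)-\Phi(x)\bigr|\le C n^{-\delta/2},\qquad \delta=\min\{a-2,1\},
\]
so the whole matter reduces to controlling $\log W_n$.

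The key intermediate result I would establish is the uniform logarithmic moment bound $\sup_n\mathbb E|\log W_n|^q<\infty$ for every $q>0$. This follows from a matching pair of estimates on positive and harmonic moments via the elementary inequality $|\log x|^q\le C_{q,\rho}(x^\rho+x^{-\rho})$. The positive moment $\sup_n\mathbb E W_n^{p\varepsilon}<\infty$ is obtained by a Burkholder/Biggins-type argument: decomposing $Z_n$ according to the generation at which each immigrant family enters expresses $W_n$ as a weighted sum of independent copies of the Biggins martingale $\bar W$, and the two clauses of $\mathrm{(H)}$ together with $\mathbb E(Y_0/m_0(\mathbf t))^\alpha<\infty$ force these copies to be uniformly bounded in $L^{p\varepsilon}$. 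The harmonic moment $\sup_n\mathbb E W_n^{-\alpha}<\infty$ is more delicate; I would iterate the one-step recursion
\[
W_{n+1}(\mathbf t)=\frac{1}{m_0(\mathbf t)}\sum_{i=1}^{N}e^{\mathbf t\mathbf L_i}W_n^{(i)}(\mathbf t)+\frac{Y_0(\mathbf t)}{\Pi_1(\mathbf t)},
\]
where $\{W_n^{(i)}\}$ are the natural environment-shifted copies attached to the children, using $\mathbb E m_0(\mathbf t)^{-\alpha}<1$ to produce a geometric contraction at each step, and transferring this contraction through the individual displacements via either the lower-tail hypothesis $\mathbb E(e^{\mathbf t\mathbf L_1}/m_0(\mathbf t))^{-\alpha}<\infty$ of (i) or the Markov/truncation bound based on $\mathbb E|\mathbf t\mathbf L_1|^a<\infty$ of (ii); the latter costs an extra factor that accounts for the parameter $r\in(0,a)$ in the statement.

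With the uniform moment bound in hand, I would close the proof by the standard smoothing device: for every $a_n>0$,
\[
\mathbb P\!\left(\tfrac{\log Z_n-n\mu}{\sqrt n\,\sigma}\le x\right)-\Phi(x)\le \Bigl[\mathbb P(S_n/(\sqrt n\,\sigma)\le x+a_n)-\Phi(x+a_n)\Bigr]+\bigl[\Phi(x+a_n)-\Phi(x)\bigr]+\mathbb P\bigl(|\log W_n|>\sqrt n\,\sigma\,a_n\bigr),
\]
together with the symmetric lower bound. The first bracket is $O(n^{-\delta/2})$ by Esseen, the second is $O(a_n)$ by the Lipschitz property of $\Phi$, and the third is $O((\sqrt n\,a_n)^{-q})$ by Markov's inequality. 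Taking $a_n=n^{-\delta/2}$ and $q$ arbitrarily large (permissible because all logarithmic moments are finite) yields the desired rate when $\delta<1$; in the boundary case $\delta=1$ a slightly sharper conditional argument, conditioning on the environment $\xi$ under which $S_n$ is deterministic and only $W_n$ carries randomness, is needed to recover the exact exponent.

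The main obstacle is the uniform harmonic-moment estimate $\sup_n\mathbb E W_n^{-\alpha}<\infty$: in the pure branching-process setting this is already the technical core of \cite{glm17} and \cite{WL21}, and here the individual weights $e^{\mathbf t\mathbf L_i}/m_n(\mathbf t)$ can be arbitrarily small when $\mathbf t\mathbf L_i$ is very negative, so additional control on the displacement is unavoidable. The two alternative hypotheses (i) and (ii) are precisely tailored to supplying this control, and verifying that either one suffices will constitute the bulk of the technical work.
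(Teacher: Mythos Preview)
Your smoothing device is sound when $\delta<1$ and condition (i) holds, but there are two genuine gaps.

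First, the claim that $\sup_n\mathbb E|\log W_n|^q<\infty$ for \emph{every} $q>0$ is only valid under condition (i), where harmonic moments are available (Lemma~\ref{le2}). Under condition (ii) the hypothesis $\mathbb E(e^{\mathbf t\mathbf L_1}/m_0(\mathbf t))^{-\alpha}<\infty$ is absent, so the harmonic-moment route fails; the paper instead controls the lower tail of $\bar W$ through the decay of its Laplace transform (Proposition~\ref{pb}) and obtains only $\sup_n\mathbb E|\log W_n|^{r_1}<\infty$ for some $r_1$ slightly above the parameter $r$ appearing in (ii). With only that moment available, your Markov estimate $\mathbb P(|\log W_n|>\sqrt n\,\sigma a_n)\le C(\sqrt n\,a_n)^{-q}$ does not in general reach $n^{-\delta/2}$.

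Second, and more fundamentally, the case $\delta=1$ cannot be rescued by conditioning on $\xi$. With $a_n=n^{-1/2}$ the quantity $\sqrt n\,\sigma a_n$ is bounded, and since $\log W_n\to\log W$ is nondegenerate, $\mathbb P(|\log W_n|>\sqrt n\,\sigma a_n)$ stays bounded away from $0$; conditioning on $\xi$ makes $S_n$ deterministic but the quenched law of $\log W_n$ still depends on $\xi$, so no useful decoupling emerges. The paper's remedy is a two-time-scale decomposition: with $m=[\sqrt n]$ one writes $\log W_n=\log W_m+(\log W_n-\log W_m)$. The point is that $(\log W_m,S_m)$ is \emph{independent} of the increment $S_n-S_m$; conditioning on $(\log W_m,S_m)$ and applying the Berry--Esseen bound to $S_n-S_m$ (Lemma~\ref{s4l03}) yields
\[
\mathbb P\!\left(\frac{\log W_m}{\sqrt n\,\sigma}+\frac{S_n}{\sqrt n\,\sigma}\ge x-a_n,\ \frac{S_n}{\sqrt n\,\sigma}\le x\right)\le Cn^{-\delta/2}
\]
using only a single finite logarithmic moment, with no requirement that $|\log W_m|$ be small. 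The remainder $\log W_n-\log W_m$ is handled by the convergence rate of $W_n$ (Proposition~\ref{s3l02}), which is of order $m^{-r}=n^{-r/2}$ plus an exponentially small term. This independence device is the missing idea; a direct bound on $\mathbb P(|\log W_n|>\sqrt n\,\sigma a_n)$ cannot achieve the sharp rate $n^{-1/2}$.
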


Theorem \ref{tt2} gives the uniform Berry-Essen bound  corresponding to the central limit theorem on $\log Z_n$. The uniformity means that this bound  holds uniformly for all $x\in\mathbb R$ and it is independent of $x$.  There is some overlap between  the conditions (i) and (ii) of Theorem \ref{tt2}, but there is no inclusion relationship. However, in   view of the requirement for (H), the condition (ii) is stronger than the condition (i). For $\mathbf t=0$, under the common assumption (H) which now becomes $\mathbb E\left(\mathbb E_\xi\bar W_1^p\right)^\varepsilon<\infty$ for some $\varepsilon>0$ and $p>1$ without other restrictions,    the moment requirement $\mathbb E m_0(\mathbf 0)^\alpha<\infty$ in the condition (i) implies that $\mathbb E (\log m_0(\mathbf 0))^a<\infty$ for all $a>0$, which means that the condition (i) is stronger than the  condition  (ii); see \cite{WL21, HZG} for the Berry-Esseen bound for BPIRE.

In Berry-Esseen bounds, if the upper bound is a sequence of functions $f_n(x)$ depending on $x\in\mathbb R$, we call such result the non-uniform Berry-Esseen bound. For the random walk $S_n$, the non-uniform Berry-Esseen bound holds (see Lemma \ref{s4l1}, cf. \cite{L2, B66, P95}):  under the moment condition that  $\mathbb E(\log m_0(\mathbf t))^a<\infty$ for  some $a>2$,  if $\sigma>0$, then
\begin{equation}\label{nbss}
\left | \mathbb{P}\left ( \frac{  S_n }{\sqrt{n} \sigma  }\le x \right ) -\Phi \left ( x \right )  \right |\le Cn^{-{\delta }/{2} }(1+|x|)^{-a}\qquad (\forall \;x\in\mathbb R),
\end{equation}
where $\delta=\min\{a-2, 1\}$. Obviously, the non-uniform Berry-Esseen bound is weaker than the uniform one. Just like $S_n$,  we can also establish a non-uniform Berry-Esseen bound on $\log Z_n$.

\begin{thm}[Non-uniform Berry-Esseen bound]\label{tt20}
Assume that $\sigma>0$, $\mathbb{E}(\log m_0(\mathbf t))^{a} < \infty$  for some $a>2$, $\mathbb E m_0(\mathbf t)^{-\alpha}<1$ and  $\mathbb E \left (\frac{Y_0}{m_0(\mathbf t)}\right)^\alpha<\infty$ for some $\alpha>0$.   Set $\delta=\min\{a-2,1\}$. 
\begin{itemize}
\item[$\mathrm(a)$]If $\mathbb E(\frac{e^{\mathbf t\mathbf L_1}}{m_0(\mathbf t)})^{-\alpha}<\infty$ and  $ {(H)}$ is true for $\varepsilon >0$  and $p>\max\{1+\frac{1}{\varepsilon},\frac{2}{\varepsilon}\}$, then for   $\lambda\in(0, a)$, we have 
\begin{equation}\label{nbs}
\left | \mathbb{P}\left ( \frac{ \log_{}{Z _{n}  }  -n\mu   }{\sqrt{n} \sigma  }\le x \right ) -\Phi \left ( x \right )  \right |\le Cn^{-{\delta }/{2} }(1+|x|)^{-\lambda}\qquad (\forall \;x\in\mathbb R).
	\end{equation}
\item[$\mathrm(b)$]	If	$\mathbb E \left | \mathbf t \mathbf L_1 \right | ^a<\infty$ and $(H)$ is true for  some $\varepsilon>0$ and $p > \max\{1+\frac{a}{(a-\max\{2\delta,1\})\varepsilon},\frac{2a}{(a-\max\{2\delta,1\})\varepsilon}\}$, 
then there exists   $\lambda_0\in(0, a]$ such that \eqref{nbs} holds for $\lambda\in(0, \lambda_0)$.
\end{itemize}
\end{thm}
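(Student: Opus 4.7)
The plan is to reduce \eqref{nbs} to three ingredients: the non-uniform Berry--Esseen bound \eqref{nbss} for the i.i.d.\ random walk $S_n$, the uniform bound of Theorem~\ref{tt2}, and polynomial tail estimates for $\log W_n$. Starting from the identity $\log Z_n - n\mu = S_n + \log W_n$, for any smoothing parameter $h_n>0$ and any $x\in\mathbb R$ one has
\begin{equation*}
\mathbb{P}\left(\frac{\log Z_n - n\mu}{\sqrt n\,\sigma} \le x\right) \le \mathbb{P}\left(\frac{S_n}{\sqrt n\,\sigma}\le x+h_n\right) + \mathbb{P}\left(|\log W_n|>h_n\sqrt n\,\sigma\right),
\end{equation*}
together with the matching lower bound obtained by replacing $x+h_n$ by $x-h_n$. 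After subtracting $\Phi(x)$, the first term is controlled by \eqref{nbss} plus the mean-value estimate $|\Phi(x\pm h_n)-\Phi(x)|\le h_n\,\phi(|x|-h_n)$ valid for $h_n<|x|$, and the second is handled by Markov's inequality once we have a uniform-in-$n$ bound $\sup_n \mathbb{E}|\log W_n|^\lambda<\infty$.

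\textbf{Uniform moment bounds on $\log W_n$.} The crucial technical input is $\sup_n \mathbb{E}|\log W_n|^\lambda<\infty$ for a suitable range of $\lambda$. The positive part is controlled via $\sup_n \mathbb{E} W_n^s<\infty$ for some $s>1$, which follows from (H) combined with $\mathbb{E}(Y_0/m_0(\mathbf t))^\alpha<\infty$ by standard $L^p$-boundedness of the intrinsic submartingale in random environment. The negative part requires harmonic-moment bounds $\sup_n \mathbb{E} W_n^{-s}<\infty$ for some $s>0$, obtained from $\mathbb{E} m_0(\mathbf t)^{-\alpha}<1$ together with the remaining hypothesis: in case (a), $\mathbb{E}(e^{\mathbf t\mathbf L_1}/m_0(\mathbf t))^{-\alpha}<\infty$ yields harmonic moments of any sufficiently small order and hence $\lambda$ in the full range $(0,a)$; in case (b), only the polynomial displacement moment $\mathbb{E}|\mathbf t\mathbf L_1|^a<\infty$ is available, and the argument produces harmonic moments only in a restricted range, yielding the corresponding restricted range $\lambda\in(0,\lambda_0)$ for some $\lambda_0\le a$ determined by the moment parameters.

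\textbf{Assembly.} With those tail estimates in hand, I would split on $|x|$. For $|x|\le 1$, \eqref{nbs} follows directly from Theorem~\ref{tt2} since $(1+|x|)^{-\lambda}$ is bounded away from zero. For $|x|>1$ I would choose $h_n = c(1+|x|)\,n^{(\delta-\lambda)/(2\lambda)}$; it suffices to consider $\lambda$ close to its maximum (which is $>1\ge\delta$, so $h_n\to 0$ as $n\to\infty$), since smaller $\lambda$ follow by monotonicity in the factor $(1+|x|)^{-\lambda}$. With this choice, Markov's inequality bounds $\mathbb{P}(|\log W_n|>h_n\sqrt n\,\sigma)$ by exactly $Cn^{-\delta/2}(1+|x|)^{-\lambda}$; the Berry--Esseen term for $S_n$ is already of that order via \eqref{nbss} and $\lambda\le a$; and the Gaussian smoothing error $h_n\,\phi(|x|-h_n)$ is super-polynomially small in $|x|$, provided $h_n\le |x|/2$, which holds for $n$ large enough since the factor $n^{(\delta-\lambda)/(2\lambda)}$ vanishes.

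\textbf{Main obstacle.} The principal difficulty is the second step in case (b): securing polynomial moments of $\log W_n$ uniformly in $n$ when the displacement moment is only $\mathbb{E}|\mathbf t\mathbf L_1|^a<\infty$. This calls for a truncation and iteration argument for the negative moments $\mathbb{E}_\xi W_n^{-s}$ that balances the polynomial tails of $\mathbf t\mathbf L_1$ against the strength of (H). The quantitative trade-off between the achievable $s$, the target $\lambda$, and the rate exponent $\delta$ is exactly what dictates the threshold $p>\max\{1+a/((a-\max\{2\delta,1\})\varepsilon),\,2a/((a-\max\{2\delta,1\})\varepsilon)\}$ appearing in part (b), and leaves the optimal value of $\lambda_0$ somewhat implicit.
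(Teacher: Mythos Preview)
Your moment inputs are essentially right: under the stated hypotheses, Theorem~\ref{lm} in the paper gives $\sup_n\mathbb E|\log W_n|^r<\infty$ for all $r>0$ in case~(a) and for $r$ in a bounded range in case~(b). The gap is in the assembly.

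Take $\delta=1$ (i.e.\ $a\ge 3$), which is the principal case. With your choice $h_n=c(1+|x|)\,n^{(1-\lambda)/(2\lambda)}$, look at $|x|$ just above $1$. There $|x|-h_n\approx 1$, so the Gaussian smoothing error is
\[
|\Phi(x\pm h_n)-\Phi(x)|\;\asymp\; h_n\,\varphi(1)\;\asymp\; n^{(1-\lambda)/(2\lambda)},
\]
and you need this to be $\le Cn^{-1/2}$. That forces $(1-\lambda)/(2\lambda)\le -1/2$, i.e.\ $1\le 0$. More generally, for any $\delta\in(0,1]$ the same computation at $|x|\approx 1$ requires $\lambda\ge \delta/(1-\delta)$, which is vacuous when $\delta=1$ and, even for $\delta<1$, is stronger than ``$\lambda$ close to its maximum'' once $a>1+\sqrt 3$. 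Replacing the $\lambda$-th moment by a higher $r$-th moment in the Markov step does not help: the compatibility condition becomes $r\ge \delta/(1-\delta)$, still impossible for $\delta=1$. The phrase ``super-polynomially small in $|x|$'' is correct, but you also need the right decay in $n$ when $|x|$ is bounded, and that is exactly what fails.

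The paper avoids this by not treating $\log W_n$ as a single black box. It splits $\log W_n=\log W_m+(\log W_n-\log W_m)$ with $m=[\sqrt n]$ and uses two distinct mechanisms: (i) $\log W_n-\log W_m$ is handled by the \emph{exponential} convergence rate of the submartingale (Propositions~\ref{s3l02}--\ref{s3l03}), so its contribution is $o(n^{-\delta/2})$ regardless of $|x|$; (ii) $\log W_m$ is independent of $S_n-S_m$, which lets one condition on $(\log W_m,S_m)$ and apply the non-uniform Berry--Esseen bound to $S_n-S_m$ (Lemma~\ref{sl403}). Step~(ii) is what replaces your crude mean-value estimate $|\Phi(x\pm h_n)-\Phi(x)|$: the integral $\int|z|^v\,\nu_n(\mathrm dy,\mathrm dz)$ there picks up the factor $n^{-v/2}$ from $\log W_m/\sqrt n$ \emph{without} any competing loss, because the smoothing is done by the genuinely Gaussian piece $S_n-S_m$ rather than by a deterministic shift. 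Your union-bound approach throws away this independence structure, and that is precisely why it cannot reach the rate $n^{-1/2}$.
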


\begin{re}In Theorem \ref{tt20}(b), the value of $\lambda_0$ can be calculated precisely. Denote $$a^*=\sup\{a:\; \mathbb E \left | \mathbf t \mathbf L_1 \right | ^a<\infty, \;\mathbb{E}(\log m_0(\mathbf t))^a <\infty \}\quad \text{and}\quad  r^*=\min\left\{\frac{p\varepsilon-2}{p\varepsilon},\;\frac{(p-1)\varepsilon-1}{(p-1)\varepsilon}\right\}a^*.$$
It can be observed that
$$\max\{2\delta,1\}\le r^*\le a^*$$
and 
$$a^*\geq \max\left\{a, \;\;\max\left\{\frac{p\varepsilon}{p\varepsilon-2},\;\frac{(p-1)\varepsilon}{(p-1)\varepsilon-1}\right\}\max\{2\delta,1\}\right\}.$$
Set $q^*=q(r^*)$, where the function $q(r)$ is defined as follows:
if $\delta=1$, $q(r)=\frac{r}{2}$; if $\delta\in(0,1/2]$, $q(r)=r$; if $\delta\in(1/2,1)$,
\begin{align*}
q(r)=
\begin{cases}
r, & \text{if \;$r\geq\frac{\delta}{1-\delta}$,}\\
\frac{(1+\delta)-\sqrt{(1+\delta)^2-4r(1-\delta)}}{2(1-\delta)},& \text{if\; $2\delta\leq r<\frac{\delta}{1-\delta}$.}
\end{cases}
\end{align*}
Then $\lambda_0=\min\left\{\eta^*,\;\frac{\eta^*}{\eta^*+1}a^*\right\}$, where $\eta^*=\min\{r^*-1,\; q^*\}$. One can calculate that 
\begin{align*}
\lambda_0=\eta^*=\begin{cases}
\frac{r^*}{2},& \text{if $\delta=1$,}\\
r^*-1, & \text{if $\delta\in(0,1/2]$.}
\end{cases}
\end{align*}
When $\delta\in(1/2,1)$,  the situation is a little complicated: if $\delta\leq 2\sqrt{2}-2$, then we still have $\eta^*=r^*-1$; however, if $\delta> 2\sqrt{2}-2$, 
there exists two constants $r_1<r_2$ depending on $\delta$ such that   
\begin{align*}
\eta^*=\begin{cases}
r^*-1,& \text{if $r^*\leq r_1$ or $r^*\geq r_2$,}\\
q^*, & \text{if $r_1<r^*<r_2$,}
\end{cases}
\end{align*}
where $r_{1}=\max\left\{2\delta, \; 1+\frac{\delta-\sqrt{\delta^2-4(1-\delta)}}{2(1-\delta)}\right\}$ and $r_{2}=\min\left\{\frac{\delta}{1-\delta},\;1+\frac{\delta+\sqrt{\delta^2-4(1-\delta)}}{2(1-\delta)}\right\}$. 
From the representation of $\lambda_0$, we can see that the value of $\lambda_0$ may be very small and even near $0$  when $a^*$  and $\min\left\{\frac{p\varepsilon-2}{p\varepsilon},\;\frac{(p-1)\varepsilon-1}{(p-1)\varepsilon}\right\}$ are small.
In particular, if $a^*=\infty$, we  have $\eta^*=q^*=r^*=\infty$, so that $\lambda_0=\infty$.
\end{re}

Comparing Theorem \ref{tt20} with Theorem \ref{tt2}, we see that the condition (a) of Theorem \ref{tt20}  is the same as the  condition (i) of Theorem \ref{tt2},  but \eqref{nbs} can leads to \eqref{bs}, which means that the result of  Theorem \ref{tt20}  is weaker than that of Theorem \ref{tt2}. However, the condition (b) of Theorem \ref{tt20}  is a little stronger than the  condition (ii) of Theorem \ref{tt2}, which indicates that the non-uniform Berry-Esseen bound may not exist even if the uniform Berry-Esseen bound   exists. Comparing   \eqref{nbs} with \eqref{nbss}, we find that (by noticing $\lambda<a$) the non-uniform Berry-Esseen bound  on $\log Z_n$ is larger than the corresponding one on $S_n$. Due to the difference between $\log Z_n$ and $S_n$, it is difficult for $\lambda$ to reach its supreme $a$ through our approach, although  it can be infinitely close to $a$ under ideal conditions.

Using the  method  in the proof of Theorem \ref{tt20}, and combining it with the moments and convergence rates of $\log W_n$ for BPIRE (cf. \cite{HZG}), we can establish the non-uniform Berry-Esseen bound on $\log Z_n$ for BPIRE.

\begin{co}\label{cobs}
Fix $\mathbf t=\mathbf 0$. Assume that $\sigma>0$, $\mathbb{E}(\log m_0(\mathbf 0))^{a} < \infty$  for some $a>2$, $\mathbb E \left (\frac{Y_0}{m_0(\mathbf 0)}\right)^\varepsilon<\infty$ and  $\mathbb E\left(\mathbb E_\xi\bar W_1^p\right)^\varepsilon<\infty$ for some $\varepsilon>0$ and $p>1$. Set   $$a^*=\sup\{a:\; \mathbb{E}(\log m_0(\mathbf 0))^{a} < \infty\} \quad\text{and}\quad \lambda_0=a^*-1.$$ Then \eqref{nbs} holds for $\lambda\in(0, \lambda_0)$ and $\delta=\min\{a-2,1\}$. In particular, if $a^*=\infty$, then $\lambda_0=\infty$ and $\delta=1$. In addition, if $a^*<\infty$ and $\lambda\in[\lambda_0, a^*)$, then \eqref{nbs} also holds for this $\lambda$ and $0<\delta<\min\{a^*-2, a^*-\lambda\}$.
\end{co}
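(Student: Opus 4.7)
The plan is to follow the same two-sided truncation strategy as in the proof of Theorem \ref{tt20}(b), specializing to $\mathbf{t}=\mathbf{0}$ so that BRWIRE collapses to a BPIRE, and to feed in the sharper moment and convergence-rate estimates for $\log W_n$ that \cite{HZG} supplies in this setting. I start from the decomposition
$$\frac{\log Z_n - n\mu}{\sqrt{n}\sigma} = \frac{S_n}{\sqrt{n}\sigma} + \frac{\log W_n}{\sqrt{n}\sigma}$$
and, for a threshold $\epsilon_n = \epsilon_n(x) > 0$ to be chosen, I use the standard sandwich inequality to obtain
\begin{align*}
\left|\mathbb{P}\left(\frac{\log Z_n - n\mu}{\sqrt{n}\sigma} \leq x\right) - \Phi(x)\right| &\leq \max_{\pm}\left|\mathbb{P}\left(\frac{S_n}{\sqrt{n}\sigma} \leq x\pm\epsilon_n\right) - \Phi(x\pm\epsilon_n)\right| \\
&\quad + \max_{\pm}\bigl|\Phi(x\pm\epsilon_n) - \Phi(x)\bigr| + \mathbb{P}\bigl(|\log W_n| > \epsilon_n\sqrt{n}\sigma\bigr).
\end{align*}
The first term is controlled by the non-uniform Berry--Esseen bound \eqref{nbss} for $S_n$, which is absorbed into $Cn^{-\delta/2}(1+|x|)^{-\lambda}$ as soon as $\lambda<a$ and $\epsilon_n\leq 1$; the second term is handled by the local Lipschitz property of $\Phi$, enhanced by rapid Gaussian decay when $|x|$ is large.

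The key new input is the tail estimate for $\log W_n$. Since $\mathbf{t}=\mathbf{0}$, the condition $\mathbb{E}[m_0(p\mathbf{t})/m_0(\mathbf{t})^p]^\varepsilon < 1$ in (H) is automatic, so (H) reduces to $\mathbb{E}(\mathbb{E}_\xi\bar W_1^p)^\varepsilon < \infty$ without any further restriction. Under this and the other hypotheses of the corollary, \cite{HZG} supplies both the uniform-in-$n$ logarithmic-moment bound $\sup_n \mathbb{E}|\log W_n|^\kappa < \infty$ for every $\kappa\in(0,a^*)$ and a convergence-rate estimate for $\log W_n \to \log W$. Markov's inequality then yields $\mathbb{P}(|\log W_n|>\epsilon_n\sqrt{n}\sigma) \leq C_\kappa(\epsilon_n\sqrt{n})^{-\kappa}$ for any $\kappa<a^*$, and optimizing the choice $\epsilon_n(x)$ of the form $c\,(1+|x|)^{\lambda/\kappa}\,n^{-1/2+\delta/(2\kappa)}$ balances this against the Berry--Esseen contribution and produces the target $Cn^{-\delta/2}(1+|x|)^{-\lambda}$. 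The admissibility constraint emerging from the optimization is $\kappa\gtrsim\lambda+1$, which is compatible with $\kappa<a^*$ precisely when $\lambda<a^*-1=\lambda_0$, giving the first claim.

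For the second claim, when $a^*<\infty$ and $\lambda\in[\lambda_0,a^*)$, the admissibility constraint $\kappa\gtrsim\lambda+1$ can no longer be satisfied within $(0,a^*)$, so the optimization needs additional slack: replacing the target rate $n^{-\delta/2}$ by $n^{-\tilde\delta/2}$ with $\tilde\delta<\min\{a^*-2,\,a^*-\lambda\}$ restores compatibility and yields the same non-uniform bound at the price of a slower decay in $n$. The main technical obstacle is the borderline $\delta=1$ case, where a purely polynomial tail estimate for $\log W_n$ is insufficient to match the rate $n^{-1/2}$ for bounded $|x|$; here one must split $|\log W_n|\leq|\log W|+|\log W_n-\log W|$, apply the $n$-independent tail estimate for $\log W$ (of polynomial type from the $L^\kappa$-boundedness) together with the faster convergence rate $\mathbb{E}|\log W_n-\log W|^\kappa\to 0$ from \cite{HZG}. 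Beyond this, the argument is a direct adaptation of the template established for Theorem \ref{tt20}(b).
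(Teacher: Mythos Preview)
Your sandwich strategy is a natural first attempt and is genuinely different from the paper's route, but it has a real gap when $\delta=1$ (i.e., whenever the third moment of $\log m_0(\mathbf 0)$ is finite, which is the generic case once $a^*>3$). The bottleneck is not the tail term $\mathbb P(|\log W_n|>\epsilon_n\sqrt n\,\sigma)$ but the middle term $|\Phi(x\pm\epsilon_n)-\Phi(x)|$. With your choice $\epsilon_n\asymp(1+|x|)^{\lambda/\kappa}n^{-1/2+\delta/(2\kappa)}$ this term is $\varphi(\xi)\epsilon_n$, and even after exploiting the Gaussian decay in $x$ you are left with the $n$-constraint $n^{-1/2+\delta/(2\kappa)}\le Cn^{-\delta/2}$, i.e.\ $\kappa\ge\delta/(1-\delta)$. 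At $\delta=1$ no finite $\kappa<a^*$ works. Your proposed fix---splitting $|\log W_n|\le|\log W|+|\log W_n-\log W|$ and feeding in the convergence rate from \cite{HZG}---only sharpens the third (tail) term and does nothing for the Lipschitz term, which depends solely on the size of $\epsilon_n$, not on which random variable causes the overshoot. Also, your stated admissibility constraint ``$\kappa\gtrsim\lambda+1$'' is not what the optimization actually produces; the binding constraint is $\kappa\ge\delta/(1-\delta)$ together with $\kappa>\lambda$, so the identification $\lambda_0=a^*-1$ does not fall out of your argument as written.

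The paper circumvents this by inserting an intermediate time $m=[\sqrt n]$ and conditioning on $(S_m,\log W_m)$, then applying the non-uniform Berry--Esseen bound to the \emph{independent} increment $S_n-S_m$ (Lemmas~\ref{sl403} and~\ref{5llw}). After conditioning, the shift $\log W_m/(\sqrt n\,\sigma)$ has $L^v$ norm of order $n^{-v/2}$ for any $v\le 1$ with $v<r$, which matches $n^{-\delta/2}$ without the $\delta/(1-\delta)$ obstruction; the residual $|\log W_n-\log W_m|$ is then disposed of via the arbitrary-polynomial rate $\mathbb E|\log W_n-\log W_m|^r=O(m^{-\gamma})$ for any $\gamma>0$ supplied by \cite{HZG}. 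For the range $\lambda\in[\lambda_0,a^*)$ the paper trades rate for spatial decay by choosing $v=r-\lambda<1$ in Lemma~\ref{5llw}, which is exactly what forces $\delta<\min\{a^*-2,a^*-\lambda\}$. If you adopt this conditioning-at-time-$m$ device in place of the direct sandwich, your outline essentially collapses to the paper's proof.
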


Compared with Theorem \ref{tt20},  the $\lambda_0$ in Corollary \ref{cobs} is at least more than $1$, and hence the phenomenon that $\lambda_0$ is near 0 will never happen, since there is no additional relationship on  $\varepsilon>0$ and $p>1$ in (H) for BPIRE.

If $\mathbb E (\log m_0(\mathbf t))^a<\infty$ for $a\geq 3$, the Berry-Esseen bound implies that the convergence rate in central limit theorem on $\log Z_n$ is at least $n^{-1/2}$. One may wonder whether there can be a  faster rate. In fact, when $\log m_0(\mathbf t)$ is a.s. non-lattice, we will see  that $n^{-1/2}$ is just the exact convergence rate. 

\begin{thm}[Exact convergence rate]\label{tt3}
	Assume that $\log m_0(\mathbf t)$ is a.s. non-lattice, $\sigma>0$, $\mathbb E (\log m_0(\mathbf t))^3<\infty$, $\mathbb E m_0(\mathbf t)^{-\alpha}<1$ and $\mathbb E \left (\frac{Y_0 }{m_0(\mathbf t)}\right)^\alpha<\infty$ for some $\alpha>0$.  If either of the following two assertions holds:
\begin{itemize}
\item[$\mathrm(i)$]$\mathbb E(\frac{e^{\mathbf t\mathbf L_1}}{m_0(\mathbf t)})^{-\alpha}<\infty$  and  $ {(H)}$ is true for some $\varepsilon >0$ and $p>\max\{1+\frac{1}{\varepsilon},\frac{2}{\varepsilon}\}$;
\item[$\mathrm(ii)$]$\mathbb E \left | \mathbf t \mathbf L_1 \right | ^a<\infty$, $\mathbb{E}(\log m_0(\mathbf t))^a <\infty$ for some $a\ge 3$, and $(H)$ is true for some $\varepsilon>0$ and $p > \max\{1+\frac{a}{(a-1)\varepsilon},\frac{2a}{(a-1)\varepsilon}\}$, 
\end{itemize}
then 
\begin{equation}\label{te3}
		\lim_{n\rightarrow\infty}\sqrt{n}\left[\mathbb P\left(\frac{\log Z_n  -n\mu }{\sqrt{n}\sigma  }\leq x\right)-\Phi(x)\right]=-\frac{1}{\sigma }\varphi(x)\mathbb E\log W+Q(x),
	\end{equation}
where  $\varphi(x)=\frac{1}{\sqrt {2\pi}} e^{-x^2/2}$ is the density function of the standard normal distribution, and $Q(x)=\frac{1}{6\sigma^3}\mu_3(1-x^2)\varphi(x)$, with $\mu_3=\mathbb E(\log m_0(\mathbf t)-\mu)^3$.
\end{thm}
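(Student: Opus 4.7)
The plan is to use the decomposition
$$\frac{\log Z_n - n\mu}{\sqrt n\,\sigma} = T_n + R_n, \qquad T_n := \frac{S_n}{\sqrt n\,\sigma}, \qquad R_n := \frac{\log W_n}{\sqrt n\,\sigma},$$
and combine two inputs. (a) Since $\log m_0(\mathbf t)$ is non-lattice with finite third moment, the classical Esseen--Cram\'er expansion gives
$$\mathbb P(T_n \le y) = \Phi(y) + \frac{Q(y)}{\sqrt n} + o(n^{-1/2})$$
uniformly in $y \in \mathbb R$, with $Q$ as in \eqref{te3}. (b) Assumption $(H)$ together with the hypotheses in $(i)$ or $(ii)$, and the logarithmic-moment bounds for $W_n$ developed in the spirit of \cite{HZG}, should yield $\mathbb E|\log W_n - \log W|^2 \to 0$ together with the key quantitative input
$$\mathbb E \log W_n = \mathbb E \log W + o(n^{-1/2}).$$

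Writing
$$\mathbb P(T_n + R_n \le x) - \Phi(x) = \big[\mathbb P(T_n \le x) - \Phi(x)\big] + \big[\mathbb P(T_n + R_n \le x) - \mathbb P(T_n \le x)\big],$$
the first bracket contributes $Q(x)/\sqrt n + o(n^{-1/2})$ by (a). For the second bracket the target identity is
$$\mathbb P(T_n + R_n \le x) - \mathbb P(T_n \le x) = -\varphi(x)\,\mathbb E R_n + o(n^{-1/2}),$$
whose leading side, multiplied by $\sqrt n$, converges by (b) to $-\varphi(x)\mathbb E\log W/\sigma$, matching the first term of \eqref{te3}. Morally this is a Taylor expansion of the CDF of $T_n$ at $x$; to make it rigorous I would apply Esseen's smoothing inequality to the Fourier expansion
$$\mathbb E e^{it(T_n + R_n)} = \mathbb E\big[e^{itT_n}\,\mathbb E_\xi e^{itR_n}\big] = \mathbb E e^{itT_n} + it\,\mathbb E\big[e^{itT_n}\,\mathbb E_\xi R_n\big] + O(t^2\,\mathbb E R_n^2),$$
in which the factorization uses that $T_n$ is $\xi$-measurable, and then combine with the expansion of $\mathbb E e^{itT_n}$ from (a).

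The main obstacle is that $T_n$ and $R_n$ are \emph{not} independent: both are functions of the environment $\xi$. Replacing $\mathbb E[e^{itT_n}\,\mathbb E_\xi R_n]$ by the decoupled product $\mathbb E e^{itT_n}\cdot \mathbb E R_n$ requires a quantitative decorrelation between the CLT-scaled walk $T_n$ (a function of $\xi_0, \dots, \xi_{n-1}$) and the conditional expectation $\mathbb E_\xi \log W_n$. I would first approximate $\mathbb E_\xi \log W_n$ by $\mathbb E_\xi \log W$ (controlled through $(H)$), and then exploit the branching identity relating $W$ to the particles at generation $n$ and their descendants, together with analogous relations accounting for the immigrants $\mathcal Y_k$, $k \ge n$, to show that, modulo an $L^2$-error of size $o(1)$, $\mathbb E_\xi \log W$ depends essentially on the shifted environment $(\xi_n, \xi_{n+1}, \dots)$ and is hence asymptotically independent of $T_n$. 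Quantifying this decorrelation, and upgrading it to the required $o(n^{-1/2})$ bound using the moment conditions of $(i)$/$(ii)$ and $(H)$, is the technical crux of the argument.
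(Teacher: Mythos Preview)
Your decomposition $\frac{\log Z_n-n\mu}{\sqrt n\sigma}=T_n+R_n$ and the Fourier expansion are fine, but the decorrelation step is a genuine gap, and your proposed fix points in the wrong direction. The random variable $\mathbb E_\xi\log W_n$ is measurable with respect to $(\xi_0,\dots,\xi_{n-1})$, and the branching identity you invoke expresses $W$ as $W_n$ times a weighted average of the limits $W^{(u)}$ rooted at generation $n$; taking logs and $\mathbb E_\xi$, the contribution $\mathbb E_\xi\log W_n$ persists and remains a function of $(\xi_0,\dots,\xi_{n-1})$, not of the shifted environment. So $\mathbb E_\xi\log W$ does \emph{not} become, modulo small error, a function of $(\xi_n,\xi_{n+1},\dots)$: it is genuinely correlated with $T_n$, and there is no evident mechanism to reduce the covariance $\mathbb E[e^{itT_n}\,\mathbb E_\xi R_n]-\mathbb E e^{itT_n}\cdot\mathbb E R_n$ to $o(n^{-1/2})$ along these lines.

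The paper avoids this difficulty by a truncation-in-time trick rather than by proving decorrelation. One introduces an intermediate level $m=[n^\beta]$ with $\beta\in(\tfrac{1}{2r},\tfrac12)$ and a threshold $a_n=n^{-b}$ with $b>\tfrac12$, and writes
\[
\frac{\log Z_n-n\mu}{\sqrt n\,\sigma}=\frac{\log Z_m-m\mu}{\sqrt n\,\sigma}+\frac{S_n-S_m}{\sqrt n\,\sigma}+D_{m,n},\qquad D_{m,n}=\frac{\log W_n-\log W_m}{\sqrt n\,\sigma}.
\]
The point is that $\log Z_m-m\mu$ is measurable with respect to $(\xi_0,\dots,\xi_{m-1})$ and the branching/immigration up to time $m$, while $S_n-S_m$ depends only on $(\xi_m,\dots,\xi_{n-1})$; hence these two pieces are \emph{genuinely independent}. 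Conditioning on the first block, one applies the Esseen expansion (Lemma \ref{s4l1}(b)) to $S_n-S_m$, a sum of $n-m\sim n$ i.i.d.\ centred terms, and integrates over the law of the first block; this yields directly
\[
\mathbb P\!\left(\frac{\log Z_m-m\mu}{\sqrt n\,\sigma}+\frac{S_n-S_m}{\sqrt n\,\sigma}\le x\pm a_n\right)=\Phi(x)-\frac{\varphi(x)}{\sigma\sqrt n}\,\mathbb E\log W+\frac{Q(x)}{\sqrt n}+o(n^{-1/2}),
\]
where the replacement of $\mathbb E\log W_m$ by $\mathbb E\log W$ uses $\sup_n\mathbb E|\log W_n|^r<\infty$ for some $r>1$ (Theorem \ref{lm}). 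The remainder $D_{m,n}$ is handled by the quantitative tail bound of Proposition \ref{s3l02}, which gives $\sqrt n\,\mathbb P(|D_{m,n}|>a_n)\to 0$ since $m=[n^\beta]\to\infty$ and $\beta r>\tfrac12$. This is the missing idea: decouple by shifting the martingale time from $n$ down to $m=o(n)$, rather than trying to prove asymptotic independence at level $n$.
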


Theorem \ref{tt3} shows the exact convergence in the central limit theorem, that is 
 $$\mathbb P\left(\frac{\log Z_n -n\mu}{\sqrt{n}\sigma }\leq x\right)-\Phi(x)\sim g(x)n^{-1/2}\quad (n\to\infty),$$
where the function $g(x)$ has a concrete expression that $g(x)=-\frac{1}{\sigma }\varphi(x)\mathbb E\log W+Q(x)$. This result is an generalization of \cite{G21} for BPRE and \cite{HZG} for BPIRE.

\medskip
The remainder of the paper  is arranged as follows. In Section \ref{BS2}, we recall the existence of the  positive and harmonic moments of $W_n$ and show sufficient conditions  for the existence of its logarithmic moments. Then by assuming the  existence of the logarithmic moments, we investigate the  convergence rates of $\log W_n$ in Section \ref{BS3}. These fast decay rates will play an important role in the study of   convergence rates in  limit theorems associated to $Z_n$. In Section \ref{BS4}, we present some auxiliary results which are helpful to evaluate the differences between $\log Z_n-n\mu$ and $S_n$. Finally, Section \ref{BS5} is devoted to the proofs of main theorems of the paper.

\section{Moments  of $\log W_n$}\label{BS2}	
In this section, we are interested in  logarithmic moments of $W_n$. The logarithmic moments can be controlled by positive and negative moments. We recall the results on moments of $W_n$ at first.

\begin{lem}[Positive moments \cite{WH22+}]\label{le3}
Let $\alpha>0$. Assume that $\mathbb E\left(\frac{Y_0}{m_0(\mathbf t)}\right)^\alpha<\infty$. If either of the following two conditions holds:
\begin{itemize}
\item[$\mathrm(i)$] $\alpha\in(0,1]$ and  $\mathbb Em_0(\mathbf t)^{-\alpha}<1$;
\item[$\mathrm(ii)$] $\alpha>1$,  $\max \left \{ \mathbb{E}m_0(\mathbf t)^{-\alpha},  \mathbb{E}\frac{m_0(\alpha\mathbf t)}{m_0(\mathbf t)^\alpha}\right \} <1$ and $\mathbb E\bar W_1^\alpha<\infty$,
\end{itemize}
then we have $\sup_n\mathbb E W_n^\alpha<\infty$.
\end{lem}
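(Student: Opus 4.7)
The plan is to decompose $Z_{n+1}(\mathbf t)$ by tagging each particle of generation $n+1$ with the most recent ``founder'' in its ancestral line---either the initial particle $\emptyset$ or the unique immigrant $0_k i$ from which it descends---yielding
\begin{equation*}
Z_{n+1}(\mathbf t)=\bar Z_{n+1}(\mathbf t)+\sum_{k=0}^{n}\sum_{i=1}^{V_k}e^{\mathbf t\mathbf S_{0_k i}}\bar Z^{(0_k i)}_{n-k}(\mathbf t),
\end{equation*}
where $\bar Z^{(0_k i)}_{n-k}$ is the Biggins-type partition function (without immigration) of relative generation $n-k$ rooted at $0_k i$, built from the shifted environment $(\xi_{k+1},\xi_{k+2},\ldots)$. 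Dividing by $\Pi_{n+1}(\mathbf t)$ gives the corresponding identity for $W_{n+1}$. The key structural remark is that, conditional on the environment $\xi$ and the immigrant data $Y=(\mathcal Y_n)$, the normalizations $\bar W^{(0_k i)}_{n-k}$ are, for each fixed $k$, i.i.d.\ with mean one, and the families corresponding to different $k$ are independent.

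For case (i) with $\alpha\in(0,1]$, I apply subadditivity of $x\mapsto x^\alpha$ to the decomposition, then Jensen's inequality conditional on $(\xi,Y)$ (together with $\mathbb E_{\xi,Y}\bar W^{(0_k i)}_{n-k}=1$) to push the power through the inner random sum, obtaining
\begin{equation*}
\mathbb E W_{n+1}^\alpha\le \mathbb E\bar W_{n+1}^\alpha+\sum_{k=0}^{n}\mathbb E\Bigl(\frac{Y_k(\mathbf t)}{\Pi_{k+1}(\mathbf t)}\Bigr)^\alpha.
\end{equation*}
The first term is at most $(\mathbb E\bar W_{n+1})^\alpha=1$ by concavity; for each summand the independence of $\Pi_k$ (a functional of $\xi_0,\ldots,\xi_{k-1}$) and $Y_k/m_k(\mathbf t)$ (a functional of $\xi_k$ and $\mathcal Y_k$) produces the product $(\mathbb E m_0(\mathbf t)^{-\alpha})^k\,\mathbb E(Y_0/m_0(\mathbf t))^\alpha$, which is a convergent geometric series under the stated hypotheses.

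For case (ii) with $\alpha>1$, I apply Minkowski's inequality in $L^\alpha(\mathbb P)$ to the same decomposition. The term $\|\bar W_{n+1}\|_\alpha$ is handled by the classical Biggins $L^\alpha$ convergence theorem for BRWRE, whose hypotheses $\mathbb E[m_0(\alpha\mathbf t)/m_0(\mathbf t)^\alpha]<1$ and $\mathbb E\bar W_1^\alpha<\infty$ are exactly those imposed; this yields $\sup_n\mathbb E\bar W_n^\alpha<\infty$. For each immigration term I condition on $(\xi,Y)$ and use the weighted Jensen inequality
\begin{equation*}
\Bigl(\sum_i a_i b_i\Bigr)^\alpha\le \Bigl(\sum_i a_i\Bigr)^{\alpha-1}\sum_i a_i b_i^\alpha
\end{equation*}
with $a_i=e^{\mathbf t\mathbf S_{0_k i}}$ and $b_i=\bar W^{(0_k i)}_{n-k}$. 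Taking $\mathbb E_{\xi,Y}$ of this bound, the conditional i.i.d.\ structure of the $b_i$'s turns the right-hand side into $Y_k(\mathbf t)^\alpha\,\mathbb E_\xi\bar W^\alpha_{n-k}$, so the same independence factorization as in case (i) gives
\begin{equation*}
\mathbb E\Bigl(\frac{1}{\Pi_{k+1}(\mathbf t)}\sum_{i=1}^{V_k}e^{\mathbf t\mathbf S_{0_k i}}\bar W^{(0_k i)}_{n-k}\Bigr)^\alpha\le \bigl(\mathbb E m_0(\mathbf t)^{-\alpha}\bigr)^k\,\mathbb E\Bigl(\frac{Y_0}{m_0(\mathbf t)}\Bigr)^\alpha\,\sup_n\mathbb E\bar W_n^\alpha.
\end{equation*}
Taking $\alpha$-th roots and summing in $k$ again yields a convergent geometric series, so $\|W_{n+1}\|_\alpha$ is uniformly bounded.

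The main obstacle is the weighted immigration sum in case (ii): a direct application of Minkowski on the inner random sum is insufficient, so the weighted Jensen step combined with the conditional i.i.d.\ structure of $\bar W^{(0_k i)}_{n-k}$ is the crucial technical point. The supply of the uniform bound $\sup_n\mathbb E\bar W_n^\alpha<\infty$ from the standard BRWRE $L^\alpha$ theory is invoked as a black box, and this is the only place where the extra conditions $\mathbb E[m_0(\alpha\mathbf t)/m_0(\mathbf t)^\alpha]<1$ and $\mathbb E\bar W_1^\alpha<\infty$ of case (ii) are needed.
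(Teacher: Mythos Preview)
Your proof is correct. Note, however, that the paper does not give its own proof of this lemma: it is imported from the cited preprint \cite{WH22+} and stated without argument, so there is no in-paper proof to compare against. Your decomposition is exactly the one the paper records elsewhere as \eqref{CRE2.2.23}, and your treatment of the two cases---subadditivity plus Jensen for $\alpha\in(0,1]$, Minkowski plus the weighted Jensen/convexity step for $\alpha>1$---is the standard route and is consistent with the techniques used throughout the paper (compare the handling of $\mathbb E|W_{k+1}-W_k|^\alpha$ in the proof of Lemma~\ref{s3l01}). One small notational point: in case~(ii), after conditioning on $(\xi,Y)$ you obtain $Y_k(\mathbf t)^\alpha\,\mathbb E_{T^{k+1}\xi}\bar W_{n-k}^\alpha$, not $\mathbb E_\xi\bar W_{n-k}^\alpha$; the shifted environment is what makes the subsequent factorization under the annealed law go through, and the stationarity of $\xi$ is what lets you bound $\mathbb E\bigl[\mathbb E_{T^{k+1}\xi}\bar W_{n-k}^\alpha\bigr]$ by $\sup_n\mathbb E\bar W_n^\alpha$. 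With that correction the displayed bound in your case~(ii) is justified exactly as written.
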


\begin{lem}[Harmonic moments \cite{WH22+}]\label{le2}
Assume that (H) is true for   some $\varepsilon >0$ and $p>\max\{1+\frac{1}{\varepsilon},\frac{2}{\varepsilon}\}$. Set $A_1=\frac{e^{\mathbf t\mathbf L_1}}{m_0(\mathbf t)}$. If $\mathbb E A_1^{-\alpha}<\infty$, then there exists $r>0$ such that $\sup_n\mathbb E W_n^{-r}<\infty$.
\end{lem}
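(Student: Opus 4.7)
The plan is to first reduce the harmonic-moment bound on $W_n$ to the corresponding bound on the Biggins martingale $\bar W_n$ of the branching random walk without immigration, and then to establish the latter by a Laplace-transform induction adapted to the random environment. The reduction is immediate: since the descendants of the initial particle form a subfamily of Generation $n$, we have $Z_n \ge \bar Z_n$ and therefore $W_n \ge \bar W_n$, so
$$\mathbb E W_n^{-r} \le \mathbb E \bar W_n^{-r}$$
for every $r>0$. Hence it suffices to prove $\sup_n \mathbb E \bar W_n^{-r} < \infty$ for some $r \in (0,\alpha]$ under the hypotheses of the lemma.

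For $\bar W_n$, I would exploit the Biggins recursion
$$\bar W_{n+1} = \sum_{i=1}^{N(\emptyset)} A_i\, \bar W_n^{(i)},$$
where, given $\xi$ and $\mathcal X(\emptyset)$, the $\bar W_n^{(i)}$ are conditionally independent copies of the Biggins martingale under the shifted environment $T\xi=(\xi_1,\xi_2,\ldots)$. Introducing the quenched Laplace transform $\phi_n^\xi(\theta):=\mathbb E_\xi e^{-\theta \bar W_n}$, the recursion becomes
$$\phi_{n+1}^\xi(\theta) = \mathbb E_\xi \prod_{i=1}^{N(\emptyset)} \phi_n^{T\xi}(\theta A_i),$$
and the integral representation $\mathbb E \bar W_n^{-r} = \frac{1}{\Gamma(r)}\int_0^\infty \theta^{r-1}\,\mathbb E\,\phi_n^\xi(\theta)\,d\theta$ reduces matters to proving a uniform polynomial decay of the form $\mathbb E\,\phi_n^\xi(\theta) \le C(1+\theta)^{-r-\gamma}$ with $\gamma>0$. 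The naive one-factor bound $\phi_{n+1}^\xi(\theta)\le\phi_n^{T\xi}(\theta A_1)$ only yields $\mathbb E \bar W_{n+1}^{-r}\le \mathbb E A_1^{-r}\cdot\mathbb E \bar W_n^{-r}$, which is not uniformly bounded in $n$ without the unavailable hypothesis $\mathbb E A_1^{-r}\le 1$. To refine, I would retain two factors of the product and interpolate them via H\"older at exponent $p$: the first clause of $(H)$, namely $\mathbb E(\mathbb E_\xi\sum_i A_i^p)^\varepsilon<1$, supplies the strict contraction that prevents geometric blow-up, while the second clause $\mathbb E(\mathbb E_\xi \bar W_1^p)^\varepsilon<\infty$ provides the initial integrability of $\bar W_1^{-r}$ needed to seed the induction. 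The thresholds $p>1+1/\varepsilon$ and $p>2/\varepsilon$ then emerge from optimizing the H\"older exponents so that the resulting product of expectations remains finite and the contraction factor stays strictly less than one.

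The main obstacle will be carrying out this Laplace-transform induction in the presence of the random environment. Unlike the BPRE case, the quenched object $\phi_n^\xi$ depends on the full environment sequence, and one must simultaneously track its decay in $\theta$ and its integrability in $\xi$; these are entangled through $A_i=e^{\mathbf t\mathbf L_i}/m_0(\mathbf t)$, which couples the displacements of the children with the environmental normalizer. Splitting the $\theta$-integral according to $\{\theta A_1\le 1\}$ versus $\{\theta A_1>1\}$, and estimating each piece using $\mathbb E A_1^{-\alpha}<\infty$ on the former and the $(H)$-contraction on the latter, should close the induction and yield the existence of a suitable $r>0$ with $\sup_n \mathbb E \bar W_n^{-r}<\infty$, hence the conclusion.
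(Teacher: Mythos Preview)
The paper does not prove this lemma; it is cited from the preprint \cite{WH22+}. However, the proof of Proposition~\ref{pb} in the present paper invokes \cite[Theorem~2.4]{WH22+} and reproduces its key recursive inequality \eqref{4tpe10}, so the structure of the intended argument is visible there.

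Your reduction $W_n\ge\bar W_n$ is correct and is indeed the first step, and the Laplace-transform framework is the right one. But the core of your plan---``retain two factors of the product and interpolate them via H\"older at exponent $p$''---is not a proof, and as you yourself concede in the last paragraph, you have not carried out the contraction step. Keeping two siblings at a single generation does not by itself yield a factor strictly less than~$1$ uniformly in~$n$, especially since $\mathbb P(N=1)>0$ is allowed here. Your alternative suggestion of splitting on $\{\theta A_1\le 1\}$ versus $\{\theta A_1>1\}$ controls a tail term but again gives no contraction on the main term.

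What the cited argument actually does (as displayed in the proof of Proposition~\ref{pb}) is rather different in mechanism. One iterates the spine lower bound $\bar W\ge A_1(0)\cdots A_1(n-1)\,\bar W^{(u_n)}$ over $n$ generations; the contraction factor comes from the events $\{N(j)\ge2,\,N^{(b)}(j)\ge1\}$ along the spine, each contributing a multiplier $B_K<1$, so that the leading term carries $\rho_{K,b}^n$ with $\rho_{K,b}<1$. The discrepancy between this spine approximation and $\bar\phi$ is controlled by $L^p$ bounds on the martingale increments $\bar W_{k+1}-\bar W_k$, and \emph{that} is where $(H)$ enters, through the quantities $\Delta_j^{(p)}(k_1,\ldots,k_v)$ whose expectations decay geometrically because $\mathbb E\bigl(\tfrac{m_0(p\mathbf t)}{m_0(\mathbf t)^p}\bigr)^\varepsilon<1$. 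The thresholds $p>1+1/\varepsilon$ and $p>2/\varepsilon$ arise from making these remainder sums finite, not from ``optimizing the H\"older exponents'' on a two-factor product. The resulting inequality has the shape $\bar\phi(s)\le q\,\mathbb E\bar\phi(Gs)+\mathbb P(s<V)$ with $q<1$; the hypothesis $\mathbb E A_1^{-\alpha}<\infty$ then gives polynomial (rather than merely logarithmic, as in Proposition~\ref{pb}) decay of $\bar\phi$, which is what the integral representation of $\mathbb E\bar W^{-r}$ needs.
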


Although one can use moments of $W_n$ to ensure the existence of its logarithmic moments, it is still necessary to study this question by a direct calculation. With a different approach, one may find another sufficient condition for  the existence of logarithmic moments.
Following this idea, we obtain the  theorem below on moments of $\log W_n$.

\begin{thm}[Logarithmic moments]\label{lm}
Set $A_1=\frac{e^{\mathbf t\mathbf L_1}}{m_0(\mathbf t)}$. Let $r>0$.  If $\mathbb E m_0(\mathbf t)^{-\alpha}<1$, $\mathbb E \left (\frac{Y_0 }{m_0(\mathbf t)}\right)^\alpha<\infty$ for some $\alpha>0$, and either of the following two conditions holds:
\begin{itemize}
\item[$\mathrm(i)$] $\mathbb E A_1^{-\alpha}<\infty$, and  $ {(H)}$ is true for some $\varepsilon >0$ and $p>\max\{1+\frac{1}{\varepsilon},\frac{2}{\varepsilon}\}$;
\item[$\mathrm(ii)$]$\mathbb E \left | \log A_1 \right | ^a<\infty$ for some $a>r$, and $(H)$ is true for some $\varepsilon>0$ and $p > \max\{1+\frac{a}{(a-r)\varepsilon},\frac{2a}{(a-r)\varepsilon}\}$, 
\end{itemize}
then 
$$\sup_n\mathbb E|\log W_n|^r<\infty\quad\text{and}\quad \mathbb E|\log W|^r<\infty.$$
\end{thm}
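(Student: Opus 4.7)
The plan is to split $|\log W_n|^r$ into its positive and negative halves, estimate each via moments of $W_n$ (or of the Biggins sub-object $\bar W_n$), and then transfer the uniform bound to $W$ via Fatou's lemma. Throughout I will rely on the elementary inequalities $(\log^+ x)^r \le (r/\beta)^r x^{\beta}$ on $[1,\infty)$ and $(\log^- x)^r \le (r/s)^r x^{-s}$ on $(0,1]$, valid for any $\beta,s>0$, so that logarithmic moments reduce to power moments.

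For the positive half, I would set $\alpha_0=\min\{\alpha,1\}$; the hypotheses $\mathbb{E} m_0(\mathbf t)^{-\alpha}<1$ and $\mathbb{E}(Y_0/m_0(\mathbf t))^{\alpha}<\infty$ also hold with $\alpha$ replaced by $\alpha_0\le 1$, so Lemma \ref{le3}(i) gives $\sup_n \mathbb{E} W_n^{\alpha_0}<\infty$. Taking $\beta=\alpha_0$ then yields $\sup_n \mathbb{E}(\log^+ W_n)^r<\infty$; this step is common to (i) and (ii). For the negative half under (i), Lemma \ref{le2} is directly available and provides some $s>0$ with $\sup_n \mathbb{E} W_n^{-s}<\infty$, so the bound $(\log^- x)^r \le (r/s)^r x^{-s}$ gives $\sup_n \mathbb{E}(\log^- W_n)^r<\infty$ without further work.

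The main obstacle is the negative half under (ii), where harmonic moments of $W_n$ are no longer available. I would first use $W_n\ge\bar W_n$ (because immigration contributes only non-negative terms) to reduce to estimating $\sup_n \mathbb{E}(\log^-\bar W_n)^r$. Iterating the one-child lower bound $\bar W_n \ge A_1^{(u_{k-1})}\bar W_{n-k}^{(\mathrm{sub})}$ along a spine of depth $k$ yields
\[
\log \bar W_n \;\ge\; \sum_{j=0}^{k-1}\log A_1^{(u_j)} \;+\; \log \bar W_{n-k}^{(\mathrm{sub})},
\]
with the spine summands independent given the environment and the residual $\bar W_{n-k}^{(\mathrm{sub})}$ a Biggins martingale of depth $n-k$. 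A union bound then decomposes $\mathbb{P}(\log^-\bar W_n>y)$ into a spine contribution, controlled by $\mathbb{E}|\log A_1|^a<\infty$ (Markov furnishes an $O(k\,y^{-a})$ term), and a residual left-tail $\mathbb{P}(\bar W_{n-k}^{(\mathrm{sub})}\le e^{-cy})$, controlled by the $L^p$-integrability supplied by (H). Choosing the spine depth $k=k(y)$ adaptively (roughly $k\asymp \log y$) balances these two pieces and yields $\mathbb{P}(\log^-\bar W_n>y)\le C\, y^{-(r+\delta)}$ uniformly in $n$ for some $\delta>0$; integrating in $y$ delivers $\sup_n \mathbb{E}(\log^-\bar W_n)^r<\infty$.

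Finally, the hypotheses of Lemma \ref{a.s.} follow from ours by Jensen's inequality (the condition $\mathbb{E} m_0(\mathbf t)^{-\alpha}<1$ yields $\mu>0$, while (H) upgrades the logarithmic integrabilities), so $W_n\to W\in(0,\infty)$ almost surely and $|\log W_n|^r\to|\log W|^r$ almost surely; Fatou's lemma together with the uniform bound just established gives $\mathbb{E}|\log W|^r<\infty$. The hardest step is the tail estimate in (ii): a fixed-depth spine iteration accumulates $n$ error terms, so the essential technical point is to balance the spine depth against the residual $L^p$-tail from (H) in a way that remains uniform in $n$. The quantitative restrictions $(p-1)\varepsilon>a/(a-r)$ and $p\varepsilon>2a/(a-r)$ in (ii) are exactly what is needed to close this balance.
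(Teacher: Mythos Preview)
Your treatment of case (i) and of the positive half $\mathbb{E}(\log^+ W_n)^r$ under (ii) matches the paper's: both use Lemma~\ref{le3} and, for (i), Lemma~\ref{le2}. The gap is in the negative half under (ii). Your one-child spine gives $\mathbb{P}(\bar W_n\le e^{-y})\le \mathbb{P}\bigl(\sum_{j<k}\log^- A_1^{(u_j)}>y/2\bigr)+\mathbb{P}\bigl(\bar W_{n-k}^{(\mathrm{sub})}\le e^{-y/2}\bigr)$; the first term is indeed $O(k^{a\vee1}y^{-a})$, but the claim that the residual is ``controlled by the $L^p$-integrability supplied by (H)'' does not hold: $L^p$ bounds control \emph{upper} tails, not lower tails. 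The residual is the same type of quantity you started with, and a one-child spine yields no contraction, so iterating only accumulates a factor of $n$ (concretely $\mathbb{P}(\bar W_n\le e^{-y})\le Cn\,y^{-a}$ via the full spine), which is not uniform in $n$. For fixed $y$, $\mathbb{P}(\bar W_m\le e^{-y/2})\to\mathbb{P}(\bar W\le e^{-y/2})>0$ as $m\to\infty$, so no adaptive choice $k=k(y)\asymp\log y$ makes the residual small either.

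The paper circumvents this circularity by first reducing $\sup_n\mathbb{E}(\log^- W_n)^r$ to $\mathbb{E}(\log^-\bar W)^r$ via a convex decreasing majorant $g$ together with $W_n\ge\bar W_n$ and the martingale property (giving $\sup_n\mathbb{E}g(\bar W_n)=\mathbb{E}g(\bar W)$), and then analysing the Laplace transform $\bar\phi(s)=\mathbb{E}e^{-s\bar W}$ of the \emph{limit}. Proposition~\ref{pb} establishes $\bar\phi(s)=O((\log s)^{-r_1})$ for some $r_1>r$ through a recursive inequality for $\bar\phi$ whose essential feature is a contraction factor $B_K<1$ on the event $\{N\ge2\}$, coming from the \emph{additional} children; that branching-induced contraction---absent from a pure one-child spine---is precisely what makes the iteration close. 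The parameter constraints $(p-1)\varepsilon>a/(a-r)$ and $p\varepsilon>2a/(a-r)$ in (ii) enter when verifying the moment hypotheses of Lemma~\ref{le1} for this recursion, not in any balance of spine depth against an $L^p$ tail.
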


We shall prove Theorem \ref{lm} by working on the  Laplace transform of the limit $\bar W$. Define
$$\bar \phi(s)=\mathbb E e^{-s \bar W} \qquad (s>0).$$
The moments of $\bar W$ rely on the decay rates of $\bar \phi(s)$ as $s$ tends to infinity. The following lemma provides an approach to study the decay rates  of $\bar \phi(s)$.

\begin{lem}[\cite{HZG}, Lemma 2.1]\label{le1}
Let $\phi(s)$ be a bounded function satisfying
\begin{equation*} 
\phi(s)\leq q \mathbb E\phi(Gs)+\mathbb P(s<V)\qquad (\forall s\geq 0),
\end{equation*} 
where $q\in(0,1)$ is a constant, and $G, V$ are non-negative random variables. If  $\mathbb E|\log G|^r<\infty$ and $\mathbb E|\log V|^r<\infty$ for some $r>0$, then $\phi(s)=O((\log s)^{-r})$ as $s\to \infty$.
\end{lem}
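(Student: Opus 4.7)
The plan is to iterate the functional inequality and then control each resulting tail probability by Markov's inequality; no deeper probabilistic input will be needed. I would introduce two independent sequences $(G_i)_{i\ge 1}$ and $(V_i)_{i\ge 1}$ of i.i.d.\ copies of $G$ and $V$ respectively, assumed mutually independent, and set $P_k=G_1G_2\cdots G_k$ with $P_0=1$. Applying the hypothesis to $\phi(P_k s)$, taking conditional expectation given $\mathcal F_k=\sigma(G_1,\dots,G_k)$, and iterating on $k$ yields, by a straightforward induction,
\[
\phi(s)\le q^n\,\mathbb E\phi(P_n s)+\sum_{k=0}^{n-1}q^k\,\mathbb P(P_k s<V_{k+1})\qquad(n\ge 1).
\]
Since $\phi$ is bounded by some constant $M$, the first term is at most $Mq^n\to 0$ as $n\to\infty$, and it suffices to prove that $\sum_{k=0}^{\infty}q^k\,\mathbb P(P_k s<V_{k+1})=O((\log s)^{-r})$ as $s\to\infty$.

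For $\log s>0$, the event $\{P_k s<V_{k+1}\}$ coincides with $\bigl\{\log V_{k+1}-\sum_{i=1}^{k}\log G_i>\log s\bigr\}$, so Markov's inequality gives
\[
\mathbb P(P_k s<V_{k+1})\le\frac{\mathbb E\bigl|\log V_{k+1}-\sum_{i=1}^{k}\log G_i\bigr|^{r}}{(\log s)^{r}}.
\]
Using the $C_r$-inequality together with subadditivity of $|\cdot|^r$ when $r\in(0,1]$ and Jensen's inequality when $r\ge 1$, the numerator can be bounded by $C(1+k^{\max\{r,1\}})$, where $C$ depends only on $r$, $\mathbb E|\log G|^r$ and $\mathbb E|\log V|^r$.

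Plugging this in,
\[
\sum_{k=0}^{\infty}q^k\,\mathbb P(P_k s<V_{k+1})\le\frac{C}{(\log s)^{r}}\sum_{k=0}^{\infty}q^k\bigl(1+k^{\max\{r,1\}}\bigr),
\]
and the remaining series converges because $q\in(0,1)$. Combined with the decay of $q^n\,\mathbb E\phi(P_n s)$, this gives $\phi(s)\le C'(\log s)^{-r}$ for all sufficiently large $s$, which is the claim.

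The main (and essentially only) technical point will be the moment bound on the partial sum of logarithms, for which one has to distinguish $r\in(0,1]$ from $r>1$ and choose the appropriate convexity/subadditivity inequality. Beyond that, the argument is pure geometric-series bookkeeping, and in particular no law-of-large-numbers, regeneration, or renewal-type input is invoked.
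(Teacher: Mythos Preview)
Your argument is correct. The paper itself does not prove this lemma; it is quoted verbatim from \cite{HZG} (there stated as Lemma~2.1), so there is no in-paper proof to compare against. Your iteration of the functional inequality via i.i.d.\ copies of $G$ and $V$, followed by Markov's inequality on $\bigl|\log V_{k+1}-\sum_{i=1}^k\log G_i\bigr|^r$ and a geometric-series bound, is exactly the standard way this type of estimate is established, and all steps are valid (in particular, the moment hypotheses force $G>0$ and $V>0$ a.s., so the logarithms are well defined).
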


With  the help of Lemma \ref{le1}, we can obtain the following result on decay rates of $\bar \phi(s)$, the  Laplace transform of $\bar W$.
\begin{pr}\label{pb}
Set $A_1=\frac{e^{\mathbf t\mathbf L_1}}{m_0(\mathbf t)}$.
Let $r>0$. If  $\mathbb E \left | \log A_1 \right | ^a<\infty$ for some $a>r$, and $(H)$ is true for some $\varepsilon>0$ and $p > \max\{1+\frac{a}{(a-r)\varepsilon},\frac{2a}{(a-r)\varepsilon}\}$, then we have $\bar \phi(s)=O((\log s)^{-r})$ as $s\to \infty$.
\end{pr}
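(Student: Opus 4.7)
The plan is to apply Lemma \ref{le1} with $\phi=\bar\phi$, which reduces the proposition to establishing a functional inequality
$$\bar\phi(s) \le q\,\mathbb{E}\bar\phi(Gs) + \mathbb{P}(s < V), \qquad q \in (0,1),$$
with non-negative random variables $G,V$ such that $\mathbb{E}|\log G|^r < \infty$ and $\mathbb{E}|\log V|^r < \infty$. The natural starting point is the smoothing equation for $\bar W$: conditional on the environment, the quantities $\bar W^{(i)}$ associated with the offspring of the root are i.i.d.\ copies of $\bar W$ in the shifted environment $T\xi$, which is independent of $\xi_0$ and identically distributed as $\xi$. In particular $\bar W\ge A_1\bar W^{(1)} + A_2\bar W^{(2)} I(N\ge 2)$.

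First I would fix $c>0$ and split the expectation defining $\bar\phi(s)$ according to whether $sA_2\bar W^{(2)} I(N\ge 2)\ge c$. On that event the lower bound on $\bar W$ gives $e^{-s\bar W}\le e^{-c}\,e^{-sA_1\bar W^{(1)}}$; on the complement, $e^{-s\bar W}\le 1$. Using $\mathbb{E}[e^{-sA_1\bar W^{(1)}}] = \mathbb{E}\bar\phi(sA_1)$ (by the independence of $T\xi$ from $(\xi_0,A_1)$ and $T\xi\stackrel{d}{=}\xi$), this yields
$$\bar\phi(s) \le e^{-c}\,\mathbb{E}\bar\phi(sA_1) + \mathbb{P}\bigl(sA_2\bar W^{(2)} I(N\ge 2) < c\bigr).$$
This is already of the desired contraction form with $q=e^{-c}\in(0,1)$ and $G=A_1$, and the hypothesis $\mathbb{E}|\log A_1|^a<\infty$ with $a>r$ immediately gives $\mathbb{E}|\log G|^r<\infty$.

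The remaining work is to rewrite the probability on the right as $\mathbb{P}(s<V)$ with a $V$ enjoying the required log-moment, while avoiding the naive circular choice $V=c/(A_2\bar W^{(2)})$ (whose log-moment is exactly what we ultimately wish to control). To break the circularity I would introduce a truncation $\delta>0$ and use the union bound
$$\mathbb{P}\bigl(sA_2\bar W^{(2)} I(N\ge 2) < c\bigr) \le \mathbb{P}\bigl(sA_2<c/\delta\bigr) + \mathbb{P}(\bar W<\delta) + \mathbb{P}(N=1).$$
The first summand is $\mathbb{P}(s<V)$ with $V=(c/\delta)/A_2$, whose log-moment is finite by the same $\mathbb{E}|\log A_1|^a<\infty$ hypothesis. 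The term $\mathbb{P}(\bar W<\delta)$ is controlled for small $\delta$ by the $L^p$-Markov bound on $\bar W^{-\beta}$: under (H), Lemma \ref{le2} provides such a harmonic moment, and the sharper condition $p>\max\{1+a/((a-r)\varepsilon),\,2a/((a-r)\varepsilon)\}$ ensures that the resulting polynomial decay $\mathbb{P}(\bar W<\delta)\lesssim\delta^\beta$ has an exponent $\beta$ large enough to be compatible with the logarithmic requirement on $V$. The residual term $\mathbb{P}(N=1)$, together with any remaining constant term, is removed by iterating the whole construction along the leftmost surviving path for $k$ generations: $A_1$ is replaced by the product $A_1A_1^{(1)}\cdots A_1^{(k-1)}$ of i.i.d.\ copies (still of finite $r$-th log-moment), and the geometric factor $\mathbb{P}(N=1)^k$ can be absorbed by choosing $k$ large.

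The main obstacle is precisely this last bookkeeping step: building $V$ \emph{entirely} out of the branching data $(A_i,N)$ and the $L^p$-moments of $\bar W$ supplied by (H), without ever invoking $\mathbb{E}|\log\bar W|^r$ (which is what the proposition is ultimately used to prove in Theorem \ref{lm}). The precise numerical constraint on $p$ in (H) arises in balancing the Markov exponent $\beta$ on $\bar W$ against the log-moment exponent $r$ on $A_1$ in this balance. Once a recursion of the Lemma \ref{le1} form has been assembled, the lemma yields $\bar\phi(s)=O((\log s)^{-r})$ directly.
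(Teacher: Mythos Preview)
Your reduction to Lemma~\ref{le1} is the right framework, and the smoothing-equation bound $\bar\phi(s)\le e^{-c}\,\mathbb E\bar\phi(sA_1)+\mathbb P\bigl(sA_2\bar W^{(2)}I(N\ge2)<c\bigr)$ is correct. The gap is in how you dispose of the term $\mathbb P(\bar W<\delta)$: you invoke Lemma~\ref{le2} to obtain a harmonic moment $\mathbb E\bar W^{-\beta}<\infty$, but Lemma~\ref{le2} requires the additional hypothesis $\mathbb E A_1^{-\alpha}<\infty$, which is \emph{not} assumed in Proposition~\ref{pb}. Indeed, Proposition~\ref{pb} is precisely the ingredient used in Theorem~\ref{lm} to treat condition~(ii), where only $\mathbb E|\log A_1|^a<\infty$ is available; under condition~(i) (where $\mathbb E A_1^{-\alpha}<\infty$ holds) the proof of Theorem~\ref{lm} bypasses Proposition~\ref{pb} entirely and uses Lemma~\ref{le2} directly. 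So your argument presupposes exactly the hypothesis the proposition is designed to avoid. Even granting a harmonic moment, the quantity $\mathbb P(\bar W<\delta)$ is an $s$-independent constant and cannot be written as $\mathbb P(s<V)$ for a $V$ with $\mathbb E|\log V|^r<\infty$; nor does your iteration remove it --- the iteration kills $\mathbb P(N=1)$ geometrically, but $\mathbb P(\bar W<\delta)$ reappears at every step and accumulates to a non-vanishing sum.

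The paper's proof avoids harmonic moments altogether. It imports from \cite{WH22+} a refined inequality for $\bar\phi$ (see \eqref{4tpe10}) built from the $L^p$ martingale increments of $\bar W_n$ and the quantities $\Delta_j^{(p)}(k_1,\dots,k_v)$; this inequality is already of the form $\bar\phi(s)\le q\,\mathbb E\bar\phi(Gs)+\mathbb P(s<V)$ with $G$ supported on finite products $\prod_{l} A_1(l)$ and $V$ a finite maximum of such products, so that $\mathbb E|\log V|^r<\infty$ follows immediately from $\mathbb E|\log A_1|^a<\infty$. The numerical constraint $p>\max\{1+\tfrac{a}{(a-r)\varepsilon},\,\tfrac{2a}{(a-r)\varepsilon}\}$ has nothing to do with the size of a harmonic-moment exponent: it enters through a H\"older inequality with conjugate exponents $a/r$ and $a/(a-r)$, used to separate $|\log A_1(l)|^r$ from $\Delta_{j+1}^{(p)}(\cdot)^{\delta}$ when verifying $\mathbb E|\log G|^r<\infty$, and it is what permits the choice $\tilde\delta=\tfrac{a}{a-r}\delta<\varepsilon$ so that $\mathbb E\Delta_0^{(p)}(\cdot)^{\tilde\delta}$ still decays geometrically.
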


\begin{proof}
The proof follows the idea in  the proof of \cite[Theorem 2.4]{WH22+}. We first introduce 
some notation. For $j\leq n-1$, denote
$$\Pi_{j,n}(\mathbf t)=\prod_{i=j}^{n-1}m_i(\mathbf t).$$
In particular, we have $\Pi_{0,n}(\mathbf t)=\Pi_{n}(\mathbf t)$.
Since $p>1$, there exists an integer $m\geq 0$ such that $p\in(2^m, 2^{m+1}]$. Let $T$ be the shift operator such that $T\xi=(\xi_1,\xi_2,\cdots)$ if $\xi=(\xi_0,\xi_1, \xi_2, \cdots)$. Set
$$\eta_n^{(p)}(\mathbf t)=\mathbb E_{T^n\xi}|\bar W_1(\mathbf t)-1|^p,$$
$$
x_i^{(p)}=\left\{
\begin{array}{ll}
2^i,& i=0,1,\cdots, m,\\
p,& i=m+1,
\end{array}
\right.
$$
and for $v\in\{1,2,\cdots, m+1\}$,
$$\Delta_j^{(p)}(k_1,\cdots,k_v)=\prod_{i=1}^v\frac{\Pi_{j,k_i+j}(x_{i}^{(p)} \mathbf t)^{p/x_{i}^{(p)}}}{\Pi_{j,k_i+j}(x_{i-1}^{(p)} \mathbf t)^{p/x_{i-1}^{(p)}}}\eta_{k_i+j}^{(p/x_{i-1}^{(p)})}(x_{i-1}^{(p)}\mathbf t).$$
Take $\delta$ satisfying 
$0<\delta<\frac{a-r}{a}\varepsilon$ and $p>\max\{1+\frac{1}{\delta}, \frac{2}{\delta}\}$.  Set    $K=K_n=n^\gamma$, where $\gamma$ satisfies $\frac{1}{\delta}<\gamma<\frac{(\tilde p-1)p}{\tilde p}$, with $\tilde p=\min\{p,2\}$. Let  $(N(l), A_1(l),A_2(l),\cdots)$, $l=0, 1,2,\cdots$, be  a sequence of random vectors   which is conditional independent  when given the environment $\xi$, and with distributions $$\mathbb P_\xi((N(l), A_1(l),A_2(l),\cdots)\in\cdot)=\mathbb P_{T^l\xi}((N, A_1,A_2,\cdots)\in\cdot).$$ 
Set $A_1(-1)=1$ by convention. 
For constant $b>0$, denote $N^{(b)}(l)=\mathbf{1}_{\{N(l)\geq 2\}}\sum_{i=2}^{N(l)}\mathbf{1}_{\{A_i(l)>b\}}$. According to the proof of \cite[Theorem 2.4]{WH22+} (refer to (5.14)), we have for all $b>0$ and $n> 1$,
\begin{eqnarray}\label{4tpe10}
\bar \phi(s)&\leq &\mathbb P(s<V)+\mathbb E\left[\bar\phi\left(s\prod_{l=0}^{n-1}A_1(l)\right)\prod_{j=0}^{n-1}\left(\mathbf{1}_{\{N(j)=1\}}+\mathbf{1}_{\{N(j)\geq 2, N^{(b)}(j)=0\}}\right.\right.\notag\\
&&\left.\left.+ B_K\mathbf{1}_{\{N(j)\geq 2, N^{(b)}(j)\geq 1\}}
\right)\right]
+\frac{c_p^\delta}{K^\delta}\sum_{j=0}^{n-1}\left[\left(\mathbb E\bar \phi\left(s\prod_{l=0}^{n-1}A_1(l)\right)\right)^{1/\beta}\right.\notag\\
&&\left.+\sum_{v=1}^{m+1}\sum_{k_1=0}^{n-j-2}\cdots\sum_{k_v=0}^{k_{v-1}-1}\left(\mathbb E\bar \phi\left(s\prod_{l=0}^{n-1}A_1(l)\right)\Delta_{j+1}^{(p)}(k_1,\cdots, k_v)^{\delta}\right)^{1/\beta}\right.\;\;\notag\\
&&\left.+\sum_{v=1}^{m+1}\sum_{k_1=n-j-1}^{\infty}\cdots\sum_{k_v=0}^{k_{v-1}-1}\left(\mathbb E\bar \phi\left(s\prod_{l=0}^{j+k_1+1}A_1(l)\right)\Delta_{j+1}^{(p)}(k_1,\cdots, k_v )^{\delta}\right)^{1/\beta}\right]^\beta.\;\;\;\;\;\;\;\;\;\;\;
\end{eqnarray}
where $\beta=\max\{\frac{p}{2},1\}\delta$, $c_p$ is a positive constant depending just on $p$, 
$$B_K=1-c_pK^{-\frac{\tilde p}{(\tilde p-1)p}}\in(0,1),$$
and
$$V=S_{p,b,K}\max_{0\leq j\leq n-1}\{\prod_{l=0}^{j-1}A_1(l)^{-1},1\},$$
with $S_{p,b,K}$ a positive constant depending on $p,b,K$. Let us   define a random variable $G$ as follows: for any measurable bounded function  $g$,
\begin{eqnarray*}
\mathbb E g(G)&=&\frac{1}{q}\left\{\mathbb E\left[ g\left(\prod_{l=0}^{n-1}A_1(l)\right)\prod_{j=0}^{n-1}\left(\mathbf{1}_{\{N(j)=1\}}+\mathbf{1}_{\{N(j)\geq 2, N^{(b)}(j)=0\}}\right.\right.\right.\notag\\
&&\left.\left.+ B_K\mathbf{1}_{\{N(j)\geq 2, N^{(b)}(j)\geq 1\}}
\right)\right]
+\frac{c_p^\delta}{K^\delta}\sum_{j=0}^{n-1}\left[\left(\mathbb Eg\left(\prod_{l=0}^{n-1}A_1(l)\right)\right)^{1/\beta}\right.\notag\\
&&\left.+\sum_{v=1}^{m+1}\sum_{k_1=0}^{n-j-2}\cdots\sum_{k_v=0}^{k_{v-1}-1}\left(\mathbb E g\left(\prod_{l=0}^{n-1}A_1(l)\right)\Delta_{j+1}^{(p)}(k_1,\cdots, k_v)^{\delta}\right)^{1/\beta}\right.\;\;\notag\\
&&\left.\left.+\sum_{v=1}^{m+1}\sum_{k_1=n-j-1}^{\infty}\cdots\sum_{k_v=0}^{k_{v-1}-1}\left(\mathbb E g\left(\prod_{l=0}^{j+k_1+1}A_1(l)\right)\Delta_{j+1}^{(p)}(k_1,\cdots, k_v)^{\delta}\right)^{1/\beta}\right]^\beta\right\},
\end{eqnarray*}
where $q$ is the normalization constant  
that 
\begin{eqnarray*}
q&=&\mathbb E\left(\prod_{j=0}^{n-1}\left(\mathbf{1}_{\{N(j)=1\}}+\mathbf{1}_{\{N(j)\geq 2, N^{(b)}(j)=0\}}+ B_K\mathbf{1}_{\{N(j)\geq 2, N^{(b)}(j)\geq 1\}}
\right)\right)\\
&&+\frac{c_p^\delta}{K^\delta}\sum_{j=0}^{n-1}\left(1+\sum_{v=1}^{m+1}\sum_{k_1=0}^{\infty}\cdots\sum_{k_v=0}^{k_{v-1}-1}\left(\mathbb E \Delta_{j+1}^{(p)}(k_1,\cdots, k_v)^{\delta}\right)^{1/\beta}\right)^\beta.
\end{eqnarray*}
We can calculate  
$$q=\rho_{K,b}^n+C_{p,\delta}\frac{n}{K^\delta},$$
where
$$\rho_{K,b}=\mathbb P(N=1)+\mathbb P(N\geq 2,N^{(b)}=0)+B_K(N\geq 2,N^{(b)}\geq 1)$$
and
$$C_{p,\delta}=c_p^\delta\left(1+\sum_{v=1}^{m+1}\sum_{k_1=0}^{\infty}\cdots\sum_{k_v=0}^{k_{v-1}-1}\left(\mathbb E \Delta_{0}^{(p)}(k_1,\cdots, k_v)^{\delta}\right)^{1/\beta}\right)^\beta<\infty.$$
Thus, we can write \eqref{4tpe10} as
\begin{equation*} 
\bar \phi(s)\leq q \mathbb E\bar \phi(Gs)+\mathbb P(s<V).
\end{equation*}
Notice that 
$$q=\rho_{K,b}^n+C_{p,\delta}\frac{n}{K^\delta}\stackrel{b\downarrow0}{\longrightarrow}\rho_{K}^n+C_{p,\delta}\frac{n}{K^\delta}\stackrel{n\uparrow\infty}{\longrightarrow}0,$$
where $\rho_K=\mathbb P(N=1)+B_K\left(1-\mathbb P(N=1)\right)$.  Thus, we can choose appropriate  $n, b$ such that $q\in(0,1)$. In order to derive $\bar \phi(s)=O((\log s)^{-r})$, by Lemma \ref{le1}, it remains to show that $\mathbb E|\log V|^r<\infty$ and
$\mathbb E |\log G|^r<\infty$.

Since $\mathbb E|\log A_1|^a<\infty$ for some $a>r$, we can calculate that 
\begin{eqnarray*}
\mathbb E|\log V|^r&=&\mathbb E\left|\log \left(S_{p,b,K}\max_{0\leq j\leq n-1}\{\prod_{l=0}^{j-1}A_1(l)^{-1},1\}\right) \right|^r\\
&\le& C\left(|\log S_{p,b,K}|^r+\mathbb E|\log A_1|^r\sum_{j=1}^{n-1}j\right)<\infty,
\end{eqnarray*}
where here and throughout the paper $C>0$ represents a general constant (it may depend on $\mathbf t$).
For $\mathbb E |\log G|^r$, we can calculate that 
\begin{eqnarray*}
q\mathbb E |\log G|^r&\le&\mathbb E \left|\sum_{l=0}^{n-1}\log A_1(l)\right|^r
+\frac{c_p^\delta}{K^\delta}\sum_{j=0}^{n-1}\left[\left(\mathbb E\left|\sum_{l=0}^{n-1}\log A_1(l)\right|^r\right)^{1/\beta}\right.\notag\\
&&\left.+\sum_{v=1}^{m+1}\sum_{k_1=0}^{n-j-2}\cdots\sum_{k_v=0}^{k_{v-1}-1}\left(\mathbb E \left|\sum_{l=0}^{n-1}\log A_1(l)\right|^r\Delta_{j+1}^{(p)}(k_1,\cdots, k_v)^{\delta}\right)^{1/\beta}\right.\;\;\notag\\
&&\left.+\sum_{v=1}^{m+1}\sum_{k_1=n-j-1}^{\infty}\cdots\sum_{k_v=0}^{k_{v-1}-1}\left(\mathbb E \left|\sum_{l=0}^{j+k_1+1}\log A_1(l)\right|^r\Delta_{j+1}^{(p)}(k_1,\cdots, k_v)^{\delta}\right)^{1/\beta}\right]^\beta.
\end{eqnarray*}
Notice that for any $k\geq 0$,
$$\mathbb E\left|\sum_{l=0}^{k}\log A_1(l)\right|^r\leq C (k+1)\mathbb E|\log A_1|^r<\infty.$$
To reach $q\mathbb E |\log G|^r<\infty$, we just need to show that for any $j\in\{0,1,\dots, n-1\}$ and $v\in\{1,2,\cdots, m+1\}$,
\begin{equation}\label{de}
\sum_{k_1=n-j-1}^{\infty}\cdots\sum_{k_v=0}^{k_{v-1}-1}\left(\mathbb E \left|\sum_{l=0}^{j+k_1+1}\log A_1(l)\right|^r\Delta_{j+1}^{(p)}(k_1,\cdots, k_v)^{\delta}\right)^{1/\beta}<\infty.
\end{equation}
Set $\tilde \delta=\frac{a}{a-r}\delta$. Then $0<\tilde \delta<\varepsilon$.
Since $\mathbb E\left(\frac{m_0(p\mathbf t)}{m_0(\mathbf t)^p}\right)^{\tilde\delta}<1$ and $\mathbb E\left(\mathbb E_\xi \bar W_1\right)^{\tilde\delta}<\infty$, according the proof of \cite[Theorem 2.4]{WH22+},   there exists a constant $\rho(p,\tilde\delta)\in(0,1)$ depending on $(p,\tilde\delta)$ such that 
$$\mathbb E \Delta_{0}^{(p)}(k_1,\cdots, k_v)^{\tilde \delta}\leq C \rho(p,\tilde\delta)^{k_1-v}.$$
 By H\"older's inequality, 
\begin{eqnarray*}
&&\mathbb E \left|\sum_{l=0}^{j+k_1+1}\log A_1(l)\right|^r\Delta_{j+1}^{(p)}(k_1,\cdots, k_v)^{\delta}\\
&\le& (j+k_1+1)^{r-1}\sum_{l=0}^{j+k_1+1}\mathbb E\left[\left|\log A_1(l)\right|^r\Delta_{j+1}^{(p)}(k_1,\cdots, k_v)^{\delta}\right]\\
&\le&(j+k_1+1)^{r-1}\sum_{l=0}^{j+k_1+1}\left(\mathbb E\left|\log A_1(l)\right|^a\right)^{r/a}\left(\mathbb E\Delta_{j+1}^{(p)}(k_1,\cdots, k_v)^{\tilde \delta}\right)^{1-r/a}\\
&\le &C(j+k_1+1)^{r} \rho(p,\tilde\delta)^{(k_1-v)(1-r/a)},
\end{eqnarray*}
which leads to \eqref{de}.
\end{proof}

Now let us give the proof of Theorem \ref{lm}, by using the moments of $W_n$  and  the decay rates of $\bar \phi(s)$.

\begin{proof}[Proof of Theorem \ref{lm}]
Since $\mathbb E m_0(\mathbf t)^{-\alpha}<1$ and $\mathbb E \left (\frac{Y_0 }{m_0(\mathbf t)}\right)^\alpha<\infty$ for some $\alpha>0$, by Lemma \ref{le3}, there exists $\epsilon>0$ small enough such that  $\sup_n \mathbb E W_n^\epsilon<\infty$. Suppose that the condition (i) is satisfied. Then it follows from Lemma \ref{le2} that $\sup_n \mathbb E W_n^{-\epsilon}<\infty$  for $\epsilon>0$ small enough. Noticing that $|\log x|\leq C(x^{\epsilon}+x^{-\epsilon})$, we  derive
$$\sup_n \mathbb E |\log W_n|^r\leq C(\sup_n \mathbb E W_n^{\epsilon}+\sup_n \mathbb E W_n^{-\epsilon})<\infty.$$

Now suppose that the condition (ii) is satisfied. Notice that
\begin{eqnarray}\label{pt4e}
\mathbb E|\log W_n|^r&=&\mathbb E|\log W_n|^r\mathbf{1}_{\{W_n\geq 1\}}+\mathbb E|\log W_n|^r\mathbf{1}_{\{W_n\le 1\}}\notag \\
&\leq &C\sup_n\mathbb E W_n^\epsilon+\sup_n\mathbb E|\log W_n|^r\mathbf{1}_{\{W_n\le 1\}}.\;\;\;\;
\end{eqnarray}
For  $x>0$, define
$$
g(x)=\left\{\begin{array}{ll}
(-\log x)^r & \text{if $0<x<x_r$;}\\
(-\log x_r)^r & \text{if $x\geq x_r$,}
\end{array}
\right.
$$
where $x_r=\min\{e^{r-1},1\}$. The function  $g(x)$ is   decreasing and convex  on $(0,\infty)$ satisfying $g(x)\geq |\log x|^r\mathbf{1}_{\{x< 1\}}$ for $x>0$. Thus,
\begin{equation}\label{p1e1}
\sup_n\mathbb E|\log W_n|^r\mathbf{1}_{\{W_n< 1\}}
\leq \sup_n\mathbb E g(W_n)\leq \sup_n\mathbb E g(\bar W_n)=\mathbb E g(\bar W),
\end{equation}
where we have used the monotonicity and convexity of $g(x)$ as well as \cite[Lemma 2.1]{hl12}. It can be seen that
\begin{eqnarray}\label{p1e2}
\mathbb E g(\bar W)&=&\mathbb E(-\log \bar W)^r\mathbf{1}_{\{\bar W< x_r\}}+(-\log x_r)^r\mathbb P(\bar W\ge x_r)\notag \\
&\le&\mathbb E(-\log \bar W)^r\mathbf{1}_{\{\bar W< x_r\}}+(-\log x_r)^r.
\end{eqnarray}
Recall that $\bar \phi(s)=\mathbb E e^{-s \bar W}$. Take $r_1$ satisfying $r<r_1<a$. By Proposition \ref{pb}, we have $\bar \phi(s)=O((\log s)^{-r_1})$. Therefore, we have
\begin{eqnarray}\label{ple3}
\mathbb E(-\log \bar W)^r\mathbf{1}_{\{\bar W< x_r\}}
&\le& C+ r\int_{x_r^{-1}}^\infty s^{-1}(\log s)^{r-1}\mathbb P(\bar W<s^{-1})\mathrm{d}s\nonumber \\
&\leq &C\left(1+  \int_{x_r^{-1}}^\infty s^{-1}(\log s)^{r-1}\bar \phi(s)\mathrm{d}s\right)<\infty.
\end{eqnarray} 
Combining \eqref{pt4e}-\eqref{ple3} gives $\sup_n \mathbb E |\log W_n|^r<\infty$. 
\end{proof}

		\section{Convergence rates of $\log W_n$}\label{BS3}
		In this section, we shall work on convergence rates of $\log W_n$, by assuming the existence of the logarithmic moments.  First, we see that  the submartingale $W_n$ can converge in $L^\alpha$ ($\alpha\in(0,1]$) very fast, with an exponential rate, under appropriate conditions.
			 		
\begin{lem}\label{s3l01}
	Let $\alpha\in(0,1]$. Assume that $\mathbb{E}m_0(\mathbf t)^{-\alpha}<1$ and $\mathbb{E}\left(\frac{Y_0}{m_0(\mathbf t)}\right)^\alpha<\infty$,  and $(H)$ is true for some $\varepsilon>0$ and $p\in (1,2]$ satisfying $p\varepsilon>\alpha$. Then there exists a constant $\rho\in (0,1)$ such that for $l>n$,
	\begin{equation}\label{s301}
		\mathbb E\left | W_l -W_n \right |^\alpha \le c\rho^n .
	\end{equation}
\end{lem}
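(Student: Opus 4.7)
The plan is to decompose $W_l-W_n$ along the genealogy of BRWIRE and control each piece by conditional independence, subadditivity of $|\cdot|^\alpha$ for $\alpha\in(0,1]$, and a quenched von Bahr--Esseen bound for the Biggins martingale. By the branching structure
$$W_n(\mathbf t)=\bar W_n(\mathbf t)+\sum_{k=0}^{n-1}\frac{1}{\Pi_{k+1}(\mathbf t)}\sum_{j=1}^{V_k}e^{\mathbf t\mathbf S_{0_kj}}\bar W^{(k,j)}_{n-k-1}(\mathbf t),$$
where $\bar W^{(k,j)}_{n-k-1}$ is the Biggins martingale of the sub-BRW rooted at the immigrant $0_kj$---conditional on $\xi$ and the immigrant data, these are independent copies of $\bar W_{n-k-1}$ in the shifted environment $T^{k+1}\xi$. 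This gives the three-term splitting $W_l-W_n=(\bar W_l-\bar W_n)+\sum_{k=0}^{n-1}T_k+\sum_{k=n}^{l-1}U_k$, where $T_k$ is the difference at times $l$ and $n$ coming from immigrants arriving by time $n$ and $U_k$ is the contribution of an immigrant batch born after time $n$. Subadditivity reduces the task to bounding the three pieces separately.

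For the Biggins part I would telescope over $j$ and apply the quenched von Bahr--Esseen bound $\mathbb E_\xi|\bar W_{j+1}-\bar W_j|^p\le 2(\Pi_j(p\mathbf t)/\Pi_j(\mathbf t)^p)\mathbb E_{T^j\xi}|\bar W_1-1|^p$, then pass to the $\alpha$-th moment by concave Jensen with exponent $\alpha/p\le 1$, and finally factorise the annealed expectation by independence of the environment coordinates. The assumption $p\varepsilon>\alpha$ together with convexity of $s\mapsto\log\mathbb E X^s$ converts $\mathbb E(m_0(p\mathbf t)/m_0(\mathbf t)^p)^\varepsilon<1$ into $\rho_1:=\mathbb E(m_0(p\mathbf t)/m_0(\mathbf t)^p)^{\alpha/p}<1$, and $(H)$ also yields $\mathbb E(\mathbb E_\xi|\bar W_1-1|^p)^{\alpha/p}<\infty$; summing the geometric bound $\mathbb E|\bar W_{j+1}-\bar W_j|^\alpha\le C\rho_1^j$ gives $\mathbb E|\bar W_l-\bar W_n|^\alpha\le C\rho_1^n$. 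The fresh-immigrant term is easiest: $\mathbb E[U_k\mid\xi,\mathcal Y_k]=Y_k/\Pi_{k+1}(\mathbf t)$ since $\bar W^{(k,j)}$ has quenched mean $1$, so concave Jensen and annealed independence of $Y_k/m_k(\mathbf t)$ from $\Pi_k(\mathbf t)$ give $\mathbb E|U_k|^\alpha\le\mathbb E(Y_0/m_0(\mathbf t))^\alpha\cdot\rho_0^k$ with $\rho_0:=\mathbb E m_0(\mathbf t)^{-\alpha}<1$, which sums over $k\ge n$ to a geometric tail.

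The main obstacle is the middle term $T_k$: applying von Bahr--Esseen directly to the inner sum over immigrants would produce a coefficient $Y_k(p\mathbf t)^{\alpha/p}$ whose moments are not controlled by the hypothesis $\mathbb E(Y_0/m_0(\mathbf t))^\alpha<\infty$. To avoid this I would instead use the $p=1$ triangle inequality inside the conditional expectation so that the coefficient becomes the desired $Y_k/\Pi_{k+1}(\mathbf t)$, and apply concave Jensen only at the end to obtain
$$\mathbb E\bigl[|T_k|^\alpha\,\big|\,\xi,\mathcal Y_k\bigr]\le\left(\frac{Y_k}{\Pi_{k+1}(\mathbf t)}\right)^\alpha\bigl(\mathbb E_{T^{k+1}\xi}|\bar W_{l-k-1}-\bar W_{n-k-1}|\bigr)^\alpha.$$
Taking the annealed expectation and factorising by independence of $\xi_0,\ldots,\xi_{k-1}$, $\xi_k$, and $\xi_{k+1},\ldots$ gives $\mathbb E|T_k|^\alpha\le C\rho_0^k\cdot\mathbb E(\mathbb E_\xi|\bar W_{l-k-1}-\bar W_{n-k-1}|)^\alpha$, whose second factor is bounded by $C\rho_1^{n-k-1}$ via the same telescoping and quenched von Bahr--Esseen argument used in the Biggins step (applied inside $\mathbb E_\xi$ and then raised to power $\alpha/p$). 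Summing $\mathbb E|T_k|^\alpha\le C\rho_0^k\rho_1^{n-k-1}$ over $k=0,\ldots,n-1$ yields $\sum_k\mathbb E|T_k|^\alpha\le C\rho^n$ for any $\rho\in(\max\{\rho_0,\rho_1\},1)$, and combined with the other two estimates this proves \eqref{s301}.
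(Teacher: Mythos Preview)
Your proof is correct and follows essentially the same approach as the paper. The only organizational difference is that the paper first telescopes $W_l-W_n=\sum_{k=n}^{l-1}(W_{k+1}-W_k)$ and then applies the genealogical decomposition \eqref{CRE2.2.23} to each increment, whereas you decompose $W_l-W_n$ directly into the Biggins piece, the old-immigrant pieces $T_k$, and the fresh-immigrant pieces $U_k$, telescoping only the Biggins part; the resulting convolution sum $\sum_k\rho_0^k\rho_1^{n-k-1}$ and its treatment are identical, and your handling of the $Y_k$ coefficient via concave Jensen on $\mathbb E[|T_k|\mid\xi,\mathcal Y_k]$ is exactly the paper's use of $\mathbb E(\mathbb E_{\xi,Y}|W_{k+1}-W_k|)^\alpha$.
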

	\begin{proof}
Notice that
		\begin{equation}\label{s302}
				\mathbb E\left | W_l -W_n \right |^\alpha \le  \sum_{k=n}^{l-1} \mathbb E\left | W_{k+1} -W_k \right |^\alpha.
		\end{equation}
According to the structure of the family tree $\mathbb{T}$,  we can decompose $W_k$ as follows: 
\begin{equation} \label{CRE2.2.23}
W_k=\bar W_k+\sum_{j=1}^{k} \Pi_j(\mathbf  t)^{-1}\sum_{i=1}^{V_{j-1}}e^{\mathbf t\mathbf S_{0_{j-1}i}}\bar W_{k-j}^{(0_{j-1}i)},
\end{equation}
where $\bar W_k^{(u)}$ denotes the Biggins martingale of the BRWRE originating from the particle $u\in\mathbb U$. 
By the above decomposition, we have
	\begin{equation*}
		W_{k+1}-W_{k}	 = \bar W_{k+1}-\bar W_k +\sum_{j=1}^{k}\Pi_j(\mathbf t)^{-1} \sum_{i=1}^{V_{j-1}}e^{\mathbf t\mathbf S_{0_{j-1}i}} (\bar W_{k+1-j}^{(0_{j-1}i)}-\bar W_{k-j}^{(0_{j-1}i)}) + \Pi_{k+1}(\mathbf t)^{-1}Y_k.
	\end{equation*}
Therefore,  realizing that $\mathbb E\left (\frac{Y_0}{m_0(\mathbf t)}\right)^\alpha<\infty$, we see that
		\begin{eqnarray}\label{s303}
\mathbb E \left | W_{k+1}-W_k \right |^\alpha&\leq&\mathbb E(\mathbb E_{\xi, Y}\left | W_{k+1}-W_k \right |)^\alpha\nonumber\\
	&\leq&\mathbb E(\mathbb E_{\xi}\left | \bar W_{k+1}-\bar W_k \right |)^\alpha\nonumber\\ 
	&\quad&+\sum_{j=1}^{k}\mathbb{E}\left[\Pi_j(\mathbf t)^{-1}\mathbb E_{\xi, Y}\left(\sum_{i=1}^{V_{i-1}}e^{\mathbf t\mathbf S_{v_{j-1}i}}\left |\bar W_{k+1-j}^{0_{j-1}i)}-\bar W_{k-j}^{(0_{j-1}i)}\right |\right )\right]^\alpha\nonumber\\
	&\quad&+(\mathbb E m_0(\mathbf t)^{-\alpha})^k\mathbb{E}\left (\frac{Y_0}{m_0(\mathbf t)}\right)^\alpha\nonumber\\
	&\leq&\mathbb E(\mathbb E_{\xi}\left | \bar W_{k+1}-\bar W_k \right |)^\alpha\nonumber\\
	&\quad&+\mathbb {E}
	\left (\frac{Y_0 }{m_0(\mathbf t)}\right)^\alpha \left(  \mathbb{ E}m_0(\mathbf t) ^{-\alpha }  \right)^{j-1}\sum_{j=1}^{k}\mathbb{E}\left( \mathbb E_\xi \left | 
	\bar W_{k+1-j}-\bar W_{k-j} \right |  \right) ^\alpha \notag\\
	&\quad&+ \mathbb E\left (\frac{Y_0}{m_0(\mathbf t)}\right)^\alpha
	(\mathbb E m_0(\mathbf t)^{-\alpha})^{ k}\nonumber\\
	&\leq&C\sum_{j=0}^{k+1}\left(  \mathbb{ E}m_0(\mathbf t) ^{-\alpha }  \right)^{j-1}\mathbb E\left ( \mathbb E_\xi 
	\left | \bar W_{k+1-j} - \bar W_{k-j} \right |  \right ) ^\alpha,
\end{eqnarray}
where we set $\bar W_{-1}=0$ by convention. We can calculate for $0\le j\le k$,
\begin{eqnarray*}
	\mathbb E_\xi\left | \bar W_{k+1-j}-\bar W_{k-j} \right |&\leq&
	\left( \mathbb E_\xi\left|\bar W_{k+1-j}-\bar W_{k-j} \right|^p\right)^{{1}/{p}}\\
	&\leq&C\left ( \frac{\Pi_{k-j}(p\mathbf t)}{\Pi_{k-j}(\mathbf t)^p}  \right ) ^{ {1}/{p} }\left ( \mathbb E_{T^{k-j}\xi }
	\left | \bar W_1-1 \right |^p  \right ) ^{{1}/{p} }.
\end{eqnarray*}
Thus
\begin{eqnarray}\label{s304}
\mathbb E\left ( \mathbb E_\xi 
\left | \bar W_{k+1-j} - \bar W_{k-j} \right |  \right ) ^\alpha&\leq&C\mathbb E\left ( \frac{\Pi_{k-j}(p\mathbf t)}{\Pi_{k-j}(\mathbf t)^p}  \right ) ^{{\alpha}/{p} }\mathbb E\left ( \mathbb E_\xi
\left | \bar W_1-1 \right |^p  \right ) ^{{\alpha}/{p} }\nonumber\\
	&\leq&C\left [ \mathbb{E}\left ( \frac{m_0(p\mathbf t)}{m_0(\mathbf t)^p}  \right )^{{\alpha }/{p} }   \right ] ^{k-j},
\end{eqnarray}
since $\mathbb E (\mathbb E_\xi \bar W_1^p)^{\frac{\alpha}{p}}\le \left [ \mathbb E(\mathbb E_\xi \bar W_{1}^p)^\varepsilon  \right ]^{\frac{\alpha }{p\varepsilon }}<\infty$. Combining \eqref{s302}, \eqref{s303} and \eqref{s304}  yields
\begin{eqnarray}\label{s305}
	\mathbb E\left | W_l-W_n \right |^\alpha&\leq&C\sum_{k=n}^{l-1} \left [ \sum_{j=0}^{k}(\mathbb Em_0(\mathbf t)^{-\alpha })^{j-1}\left ( \mathbb E\left (  \frac{m_0(p\mathbf t)}{m_0(\mathbf t)^p} \right )^{{\alpha }/{p} }
	\right )  ^{k-j}+(\mathbb Em_0(\mathbf t)^{-\alpha}) ^k\right] \nonumber\\
	&\leq&C\sum_{k=n}^{l-1}\sum_{j=0}^{k+1} \rho_ {\alpha} ^{k-1},
\end{eqnarray}
where $\rho_\alpha = \max\{\mathbb Em_0(\mathbf t)^{-\alpha},\mathbb E\left ( \frac{m_0(p\mathbf t)}{m_0(\mathbf t)^p}  \right )^{{\alpha }/{p } } \}<1$. since $\alpha<p\varepsilon$.  Take $\rho\in(\rho_\alpha,1)$.  It follows that
\begin{eqnarray}\label{s306}
	\sum_{k=n}^{l-1}\sum_{j=0}^{k+1}\rho_\alpha ^{k-1} \leq C\rho_\alpha ^{-1}\sum_{k=n}^{l-1}k\rho_\alpha ^k \leq  C\rho^n.
\end{eqnarray}	
Combining \eqref{s305} and \eqref{s306},   we  obtain  \eqref{s301}.
		\end{proof}
		
Under the conditions of Lemma \ref{s3l01}, we can further deduce the convergence rates of $\log W_n$  if the logarithmic moment	$\sup_n \mathbb E |\log W_n|^r$ exists for some $r>0$.	

For $l>n$, set					
$$\eta _{n.l}= \frac{W_l}{W_n} -1.$$ 
Then $$\log W_l-\log W_n = \log(\eta_{n.l}+1).$$ We have  the following conclusion which describes the decay rates of $\eta_{n,l}$ and $\log W_l-\log W_n$ in probability.

	\begin{pr}\label{s3l02}
Under the conditions of Lemma \ref{s3l01}, if $\sup _n\mathbb E\left | \log W_n \right |^r<\infty $ for some $r>0$, then  there exists a constant $\rho \in(0,1)$ such that for all $x>0$ and $l>n$,
	\begin{equation}\label{s307}
	\mathbb P(|\eta_{n.l}|>x)\leq C(x^{-\alpha}\rho^n +n^{-r}),
	\end{equation}
		\begin{equation}\label{s308}
		\mathbb P(|\log W_l-\log W_n|>x)\leq C\left [ (x^{-\alpha }+1)\rho ^n+n^{-r} \right ] .
	\end{equation}
\end{pr}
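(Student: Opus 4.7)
The two tail bounds are linked: once \eqref{s307} is established, \eqref{s308} follows from an elementary comparison between $\log(1+y)$ and $y$. So the substance of the argument lies in \eqref{s307}.

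For \eqref{s307}, the identity $|\eta_{n,l}| = |W_l-W_n|/W_n$ gives, for any threshold $c = c_n \in (0,1)$,
\begin{equation*}
\mathbb P(|\eta_{n,l}| > x) \leq \mathbb P(|W_l - W_n| > xc) + \mathbb P(W_n < c).
\end{equation*}
The first summand is handled by Markov's inequality applied to $|W_l-W_n|^\alpha$ together with Lemma \ref{s3l01}, yielding an upper bound of order $(xc)^{-\alpha}\rho_0^n$, where $\rho_0 \in (0,1)$ is the constant supplied by that lemma. The second summand is contained in $\{|\log W_n| > -\log c\}$, and another application of Markov's inequality, using $\sup_n \mathbb E|\log W_n|^r<\infty$, bounds it by $C(-\log c)^{-r}$.

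The calibration $c = \rho_0^{\beta n}$ with any $\beta \in (0, 1/\alpha)$ then produces $x^{-\alpha}\rho_0^{(1-\alpha\beta)n}$ and $C(\beta n|\log\rho_0|)^{-r} \asymp n^{-r}$ respectively. Setting $\rho = \rho_0^{1-\alpha\beta} \in (0,1)$ delivers \eqref{s307}, and since $\alpha \leq 1$ the admissible range for $\beta$ is always nonempty.

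For \eqref{s308}, one applies the elementary inequality $|\log(1+y)| \leq 2|y|$ valid for $|y| \leq 1/2$. On the event $\{|\eta_{n,l}| \leq 1/2\}$ the set $\{|\log(1+\eta_{n,l})| > x\}$ is empty when $x \geq 1$ and is contained in $\{|\eta_{n,l}| > x/2\}$ when $0 < x < 1$; the complementary event $\{|\eta_{n,l}| > 1/2\}$ is controlled by \eqref{s307} at $x = 1/2$. Combining the two parts yields \eqref{s308}. The one delicate point throughout is the choice of $c$: it must be small enough that $\mathbb P(W_n<c)$ is controlled by the logarithmic moment (forcing $-\log c \gtrsim n$) yet large enough that $c^{-\alpha}\rho_0^n$ remains geometrically small. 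The subgeometric window $c = \rho_0^{\beta n}$ threads this needle precisely when $\beta\alpha < 1$.
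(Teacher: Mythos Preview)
Your proof is correct and follows essentially the same approach as the paper: the paper also splits $\{|\eta_{n,l}|>x\}$ via a threshold $c=b^n$ with $b\in(0,1)$ chosen so that $b^{-\alpha}\rho_1<1$ (your parametrization $c=\rho_0^{\beta n}$, $\beta<1/\alpha$ is equivalent), applies Markov in $\alpha$-th moment and in $r$-th logarithmic moment to the two pieces, and then deduces \eqref{s308} from \eqref{s307} by the same comparison $|\log(1+y)|\le M|y|$ near the origin plus the tail event $\{|\eta_{n,l}|>c\}$. The only cosmetic difference is that the paper leaves $M,c$ generic while you take $M=2$, $c=1/2$.
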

	\begin{proof}
	First we prove \eqref{s307}. By Lemma \ref{s3l01},  there exists $\rho_1\in(0,1)$ such that 
\begin{equation}\label{s3010}
	\mathbb E|W_l-W_n|^\alpha\leq C\rho_1^n.
\end{equation}
Take $b\in (0,1)$ satisfying $b^{-\alpha}\rho_1<1$, and set $\rho=b^{-\alpha}\rho_1$.
	By Markov's inequality,  
	\begin{eqnarray}\label{s309}
		\mathbb P(|\eta_{n.l}|>x) &=& \mathbb P(|W_l-W_n|>W_nx)\nonumber\\
		&\leq&\mathbb P(|W_l-W_n|>xb^n)+\mathbb P(W_n<b^n)\nonumber\\
		&\leq&\mathbb P(|W_l-W_n|>xb^n)+\mathbb P\left(|\log W_n|>n|\log b|\right)\nonumber\\
		&\leq&\frac{\mathbb E\left | W_l-W_n \right |^\alpha  }{x^\alpha b^{\alpha n}}+\frac{ \sup_n\mathbb E\left | 
			\log W_n\right |^r   }{|\log b|^rn^r}.
	\end{eqnarray}
	 Combining \eqref{s309} and \eqref{s3010} gives \eqref{s307}, since $\sup_n\mathbb E|\log W_n|^r<\infty$.
	  
Then let us prove \eqref{s308}.  Notice that $|\log (x+1)|\leq M|x|$ on $(-c,\infty)$ for some positive constants $M$ and $c$. By \eqref{s307},
\begin{eqnarray*}
	\mathbb P(|\log W_l-\log W_n|>x) &\le& \mathbb P (|\log (\eta_{n.l}+1)|>x,\eta_{n.l}\geq-c)+\mathbb P(\eta_{n.l}\leq-c)\\
	&\leq&\mathbb P\left(|\eta_{n.l}|>\frac{x}{M}\right)+\mathbb P(|\eta_{n.l}|>c)\\
	&\leq&C\left [ \left ( \frac{x}{M}  \right )^{-\alpha }\rho ^n+n^{-r}+c^{-\alpha }\rho^n+n^{-r}  \right ]\\
	&\leq&C\left[(x^{-\alpha}+1)\rho^n+n^{-r}\right],
\end{eqnarray*}
which gives  \eqref{s308}.
	\end{proof}
	
Moreover, it is also possible to characterize the $L^q$ $(q>0)$ decay rates of $\log W_l-\log W_n$. The following result indicates that the  $L^q$ decay rates can be polynomial based on the conditions of Lemma \ref{s3l01} and the logarithmic moment of $W_n$.
	
\begin{pr}\label{s3l03}
	Let $q>0$.  Under the conditions of Lemma \ref{s3l01}, if  $\sup_n\mathbb E|\log W_n|^{\max\{q,1\}r}<\infty$ for some $r>1$, then we have for $l>n$, 
	\begin{equation}\label{s3011}
		\mathbb E|\log W_l-\log W_n|^q =O(n^{-(r-1)}).
	\end{equation}
\end{pr}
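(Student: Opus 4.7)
\textbf{Proof plan for Proposition \ref{s3l03}.} The strategy is to recover the $L^q$ rate from a tail bound by integrating the layer cake formula
$$\mathbb E|\log W_l-\log W_n|^q=\int_0^\infty qx^{q-1}\mathbb P(|\log W_l-\log W_n|>x)\,\mathrm{d}x$$
and splitting the integral at a cutoff $T_n>0$ to be optimized. On $[0,T_n]$ I would apply the tail bound \eqref{s308} from Proposition \ref{s3l02}; on $(T_n,\infty)$ that estimate is useless because of its $x$-uniform part $C\rho^n$, and we must instead appeal to Markov's inequality for a high moment of $\log W_l-\log W_n$, which is precisely what the hypothesis $\sup_n\mathbb E|\log W_n|^{\max\{q,1\}r}<\infty$ is there to supply.

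Write $X=\log W_l-\log W_n$ and set $s=\max\{q,1\}r$. The elementary inequality $|X|^s\leq C(|\log W_l|^s+|\log W_n|^s)$ combined with the hypothesis yields $\sup_{l,n}\mathbb E|X|^s<\infty$, so Markov gives $\mathbb P(|X|>x)\leq Cx^{-s}$; hence the far-tail integral is bounded by $\int_{T_n}^\infty qx^{q-1-s}\,\mathrm{d}x=CT_n^{q-s}$, which converges thanks to $s>q$ (this is where $r>1$ enters). On the near part, after a further splitting at $x=1$, the trivial bound $\mathbb P\leq 1$ handles the exponentially small neighbourhood of $0$ where \eqref{s308} blows up, while on $[1,T_n]$ one has $x^{-\alpha}\leq 1$ and \eqref{s308} contributes at most $CT_n^q(\rho^n+n^{-r})$.

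Collecting these estimates gives, up to exponentially small corrections,
$$\mathbb E|X|^q\leq CT_n^qn^{-r}+CT_n^{q-s},$$
and choosing $T_n=n^{r/s}$ balances the two terms, both becoming $n^{-r(s-q)/s}$. For $q\geq 1$ one has $s=qr$, so this exponent equals $-(r-1)$; for $0<q<1$ one has $s=r$, so it equals $-(r-q)$, which is $o(n^{-(r-1)})$ since $q<1<r$. In either case the claimed rate $O(n^{-(r-1)})$ follows.

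The main obstacle, and the reason the statement is not a direct corollary of Proposition \ref{s3l02}, is exactly the $x$-uniform part of the tail bound \eqref{s308}: it prevents the layer-cake integral from converging at infinity and forces one to import an independent far-tail estimate coming from the logarithmic moments of $W_n$. The seemingly asymmetric exponent $\max\{q,1\}r$ in the hypothesis is calibrated so that these two incompatible pieces dovetail at precisely the rate $n^{-(r-1)}$.
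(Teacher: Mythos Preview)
Your layer-cake argument is correct and reaches the claimed rate, though the treatment of the interval $[0,1]$ would benefit from one more sentence: after confining the trivial bound to the exponentially small set $[0,c_n]$ with, say, $c_n=\rho^{n/\alpha}$, you still need \eqref{s308} on $[c_n,1]$, where the $x^{-\alpha}\rho^n$ term integrates to something exponentially small and the $n^{-r}$ term contributes at most $n^{-r}$, which is absorbed into $T_n^qn^{-r}$.

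The paper takes a different and shorter route. Instead of layer-cake with a polynomial cutoff $T_n$, it splits $\mathbb E|\log W_l-\log W_n|^q$ according to the indicator $\{|\log W_l-\log W_n|\gtrless b^n\}$ for a fixed $b\in(0,1)$: the small-value part is trivially at most $b^{nq}$, and on the large-value part a single H\"older step gives
\[
\mathbb E|X|^q\mathbf 1_{\{|X|>b^n\}}\le \bigl(\mathbb E|X|^{qr}\bigr)^{1/r}\,\mathbb P(|X|>b^n)^{1-1/r}.
\]
Then \eqref{s308} is invoked only at the single point $x=b^n$ (with $b$ chosen so that $b^{-\alpha}\rho<1$), yielding $\mathbb P(|X|>b^n)\le Cn^{-r}$ and hence the bound $Cn^{-(r-1)}$ directly. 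Both proofs feed on exactly the same two ingredients, the tail estimate \eqref{s308} and the uniform $\max\{q,1\}r$-th logarithmic moment, but the paper combines them via H\"older at an exponentially small threshold, whereas you integrate them along the layer-cake with a polynomial threshold $T_n=n^{r/s}$. A minor bonus of your approach is that it makes visible the sharper rate $n^{-(r-q)}$ when $q<1$.
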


	\begin{proof}
Take $b\in (0,1)$, where the value of $b$ will be determined later.   We have
	\begin{eqnarray}\label{s3012}
  		\mathbb E|\log W_l-\log W_n|^q&=&	\mathbb E|\log W_l-\log W_n|^q\mathbf 1_{\{|\log W_l-\log W_n|<b^n\}}\nonumber\\
  		&\quad&+\mathbb E|\log W_l-\log W_n|^q\mathbf 1_{\{|\log W_l-\log W_n|\geq b^n\}}\nonumber\\
  		&\leq&b^{nq}+\mathbb E(|\log W_l-\log W_n|^q)\mathbf 1_{\{|\log W_l-\log W_n|> b^n\}}\nonumber.
	\end{eqnarray}
Using H\"older's inequality and   \eqref{s308}, we deduce that
\begin{eqnarray}\label{s3013}
	&\quad&\mathbb E(|\log W_l-\log W_n|^q)\mathbf 1_{\{|\log W_l-\log W_n|> b^n\}}\nonumber\\
	&\leq&\mathbb E(|\log W_l-\log W_n|^{qr})^{{1}/{r}}\mathbb P( |\log W_l-\log W_n|\geq b^n)^{1-{1}/{r}}\nonumber\\
	&\leq& C\left ( \sup_n\mathbb E|\log W_n|^{qr} \right )^{{1}/{r} }\mathbb P(|\log W_l-\log W_n|>b^n)^{1-{1}/{r} } \nonumber\\
	&\leq&C(b^{-\alpha n}\rho^n+\rho^n+n^{-r})^{1-{1}/{r}}\nonumber\\
	&\leq&Cn^{-(r-1)},
\end{eqnarray}
if we take $b\in(0,1)$ satisfying $b^{-\alpha}\rho\in(0,1)$. Combining \eqref{s3012} with \eqref{s3013} yields \eqref{s3011}.
	\end{proof}


\section{Auxiliary results}\label{BS4}
In this section, we present some auxiliary results which will play an key role in the proofs of theorems.
Recall that 
\begin{equation*}
S_n=\log \Pi_n(\mathbf t)-n\mu
\end{equation*}
is the corresponding centred random walk of the system, and 
\begin{equation*}
F_n(x)=\mathbb P \left(\frac{S_n}{\sqrt n\sigma}\le x \right)\quad \left(x \in \mathbb R \right)
\end{equation*}
is the distribution function of $\frac{S_n}{\sqrt n\sigma}$.
Since $S_n$ is the partial sum of i.i.d. sequence $\left\{\log m_i(\mathbf t)-\mu\right\}$, it is known the following results about the convergence rates in central limit theorem on $S_n$  according to classical knowledge in probability theory.

\begin{lem}
\label{s4l1}
		Let $X$ be a random variable with $\mathbb E X=\mu$. Denote $\sigma^2=\mathbb E(X-\mu)^2$ and $\mu_3=\mathbb E(X-\mu)^3$. Assume that $\sigma>0$.		
Let $X_n$ be independence copies of $X$ and set $S_n=\sum_{k=1}^n(X_k-\mu)$. 
		\begin{itemize}
			\item[(a)]\emph{(Non-uniform Berry-Esseen bound, \cite{B66, P95})} If $\mathbb E|X|^{a}<\infty$ for some $a>2$,
			then
			$$\left|\mathbb P\left(\frac{S_n  }{\sqrt{n}\sigma }\leq x\right)-\Phi(x)\right|\leq Cn^{-\delta/2}(1+|x|)^{-a} ,$$
			 where $\delta=\min\{a-2,1\}$;
			\item[(b)]\emph{(Exact convergence rate, \cite{L2})}
			If $X$ is non-lattice and $\mathbb E|X|^3<\infty$, then
			$$\lim_{n\rightarrow\infty}\sqrt{n}\sup_{x\in\mathbb R}\left\{\left|\mathbb P\left(\frac{S_n }{\sqrt{n}\sigma }\leq x\right)-\Phi(x)-\frac{1}{\sqrt{n}}Q(x)\right|\right\}=0,$$
		\end{itemize}
		where $\Phi(x)=\frac{1}{\sqrt {2\pi}}\int_{-\infty}^x e^{-t^2/2}\mathrm{d}t$ is   the standard normal distribution function,  $\varphi(x)=\frac{1}{\sqrt {2\pi}} e^{-x^2/2}$   the density function of the standard normal distribution, and $Q(x)=\frac{1}{6\sigma^3}\mu_3(1-x^2)\varphi(x)$. 
\end{lem}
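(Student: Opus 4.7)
The lemma records two classical results on i.i.d.\ sums, due to Bikelis-Nagaev type arguments and Esseen's expansion theory, proved in detail in \cite{B66, P95, L2}. My plan is to sketch the standard Fourier-analytic route. Write $\phi(t)=\mathbb E e^{it(X-\mu)/\sigma}$ and let $F_n$ denote the distribution function of $S_n/(\sqrt n\sigma)$, so that the characteristic function of $F_n$ is $\phi(t/\sqrt n)^n$. The backbone of both parts is Esseen's smoothing inequality,
\[
\sup_x |F_n(x)-\Phi(x)|\le \frac{1}{\pi}\int_{-T}^T \left|\frac{\phi(t/\sqrt n)^n-e^{-t^2/2}}{t}\right|\mathrm dt +\frac{C}{T}\qquad (T>0).
\]

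For part (a), first truncate at level $\sqrt n$, i.e.\ replace $X_k$ by $X_k\mathbf 1_{\{|X_k|\le\sqrt n\}}$; under $\mathbb E|X|^a<\infty$ the truncation error is $O(n^{-\delta/2})$ by Markov's inequality, so it suffices to analyse the truncated sum. On the truncated characteristic function one has the Edgeworth-type Taylor expansion of $\log\phi(t/\sqrt n)^n$ on $|t|\le c\sqrt n$, yielding $|\phi(t/\sqrt n)^n-e^{-t^2/2}|\le C n^{-\delta/2}|t|^{2+\delta}e^{-t^2/4}$, and Esseen's inequality with $T=c\sqrt n$ gives the uniform bound $O(n^{-\delta/2})$. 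The non-uniform weight $(1+|x|)^{-a}$ is then produced by Bikelis' method: one multiplies $F_n(x)-\Phi(x)$ by $x^k$ for $k=\lceil a\rceil$ and takes its Fourier transform, which after $k$ integrations by parts is controlled by the derivatives $\partial^k_t[\phi(t/\sqrt n)^n-e^{-t^2/2}]$; these derivatives are bounded by means of Fa\`a di Bruno's formula together with the moment hypothesis $\mathbb E|X|^a<\infty$.

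For part (b), the non-lattice hypothesis supplies the key crown-fact $\sup_{|t|\ge\varepsilon}|\phi(t)|<1$ for every $\varepsilon>0$, so the contribution of $|t|\in[\varepsilon\sqrt n,T]$ to Esseen's integral is geometrically small in $n$ (after choosing $T=C\sqrt n$ and using a standard two-piece bound that cuts off the tail of $\phi$). On the central range $|t|\le\varepsilon\sqrt n$, the three-term Taylor expansion $\phi(t)=1-t^2/2-i\mu_3 t^3/(6\sigma^3)+o(t^3)$ gives
\[
\phi(t/\sqrt n)^n = e^{-t^2/2}\Bigl(1+\frac{(it)^3\mu_3}{6\sigma^3\sqrt n}\Bigr)+o(n^{-1/2})
\]
uniformly on compacts. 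Since the inverse Fourier transform of $e^{-t^2/2}(it)^3$ corresponds, up to a universal constant, to $(1-x^2)\varphi(x)$, Fourier inversion together with Esseen's inequality identifies the correction with $Q(x)/\sqrt n$ and delivers the uniform $o(n^{-1/2})$ rate.

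The main obstacle is the non-uniform refinement in (a): the natural Fourier image of $(1+|x|)^a$ is singular, so passing from a uniform to a non-uniform bound requires either Cram\'er's conjugate distribution (exponential tilting) to transport central estimates into the tails, or the delicate integration-by-parts scheme of Bikelis-Nagaev paired with sharp tail bounds on truncated moments. In the present paper, since the conclusion is already available in \cite{B66, P95}, we would simply cite those theorems rather than reproduce the technical computations.
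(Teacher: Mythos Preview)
Your sketch is a reasonable outline of the classical Fourier-analytic arguments behind these results, and you correctly identify the key ingredients (Esseen's smoothing inequality, truncation, the Bikelis integration-by-parts device for the non-uniform weight, and the non-lattice control of $|\phi|$ for the Edgeworth expansion). However, the paper does not prove this lemma at all: it is stated purely as a citation of known results, with parts (a) and (b) attributed directly to \cite{B66, P95} and \cite{L2} respectively, and no argument is given in the text. In other words, the paper treats Lemma~\ref{s4l1} as a black-box input from the literature, exactly as you anticipate in your final sentence. So there is nothing to compare here beyond noting that your proposal supplies a proof sketch where the paper supplies only a reference.
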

	
\begin{lem}\label{s4l03}
Assume that $\mathbb E\left(\log m_0(\mathbf t)\right)^{a} < \infty$ for some $a>2$ and $\sup_n \mathbb E \left|\log W_n\right|^r < \infty$ for some $r>0$. Let $v\in(0,1]$ be a constant satisfying $v<r$. Set   $\delta=\min\{a-2,1\}$, $m=[\sqrt{n} ] $ and $a_n=n^{-{\delta }/(2v) } $.
Then for all $x \in \mathbb R$, we have 
\begin{equation}\label{s401}
\mathbb P\left(\frac{\log W_m }{\sqrt{n }\sigma } +\frac{S_n}{\sqrt{n }\sigma } \ge x-a_n, \frac{S_n}{\sqrt{n }\sigma }\le x  \right) \le Cn^{-{\delta_v }/{2} },
\end{equation}
\begin{equation}\label{s402}
\mathbb P\left(\frac{\log W_m }{\sqrt{n }\sigma } +\frac{S_n}{\sqrt{n }\sigma } \le x+a_n, \frac{S_n}{\sqrt{n }\sigma }\ge x  \right) \le Cn^{-{\delta_v }/{2} },
\end{equation}
where $\delta_v=\min\{a-2,v\}$.
\begin{proof}
We just prove  \eqref{s401}, and the proof of \eqref{s402} is similar.   By Lemma \ref{s4l1}(a),
\begin{equation}\label{s403}
\sup_{x \in \mathbb R}\left|F_n(x)-\Phi (x)\right| \le Cn^{-{\delta }/{2} } .
\end{equation}
Notice that
\begin{equation*}
\log W_m +S_n = \log W_m +S_m+(S_n-S_m).
\end{equation*}
Let $\nu_n(\mathrm{d}y,\mathrm{d}z)=\mathbb P\left(\frac{S_m}{\sqrt{n }\sigma }  \in \mathrm{d}y,\frac{\log W_m }{\sqrt{n }\sigma }\in \mathrm{d}z \right)$. Then
\begin{eqnarray}\label{s404}
&\quad&\mathbb P\left(\frac{\log W_m }{\sqrt{n }\sigma } +\frac{S_n}{\sqrt{n }\sigma } \ge x-a_n, \frac{S_n}{\sqrt{n }\sigma }\le x  \right)\nonumber\\
&=&\mathbb P\left(\frac{\log W_m }{\sqrt{n }\sigma } +\frac{S_m}{\sqrt{n }\sigma } +\frac{(S_n-S_m)}{\sqrt{n }\sigma }\ge x-a_n, \frac{S_m}{\sqrt{n }\sigma } +\frac{(S_n-S_m)}{\sqrt{n }\sigma }\le x  \right) \nonumber\\
&=&\int \mathbf 1_{\left\{a_n+z \ge0\right\}}\left[F_{n-m}\left( A\right)-F_{n-m}\left(B\right)\right]\nu_n(\mathrm{d}y,\mathrm{d}z),
\end{eqnarray}
where $A=\frac{\sqrt{n} }{\sqrt{n-m} }(x-y)$ and $B=\frac{\sqrt{n} }{\sqrt{n-m} }(x-a_n-y-z)$.
By \eqref{s403}, and using the mean value theorem, 
\begin{eqnarray}\label{s405}
&\quad&\left|F_{n-m}\left(A \right)-F_{n-m}\left(B\right)\right|\nonumber\\
&\le& 2\sup_{x \in \mathbb R}\left|F_{n-m}(x)-\Phi (x)\right|+\left|\Phi \left(A\right)-\Phi\left(B\right)\right|\nonumber\\
&\le& Cn^{-{\delta }/{2} } +\min\left\{1,\;\sup_{x \in \mathbb R}|\varphi (x)|\frac{\sqrt{n} }{\sqrt{n-m} }|a_n+z|\right\}\nonumber\\
&\le& C(n^{-{\delta }/{2} }+|z|^v),
\end{eqnarray}
where   we have used the fact that $\sup_{x \in \mathbb R}|\varphi(x)|\le C$.
Combining \eqref{s404} with \eqref{s405} yields
\begin{eqnarray*}
&\quad&\mathbb P\left(\frac{\log W_m }{\sqrt{n }\sigma} +\frac{S_n}{\sqrt{n }\sigma} \ge x-a_n, \frac{S_n}{\sqrt{n }\sigma}\le x \right ) \\
&\le& C\left(n^{-{\delta }/{2} }+\int |z|^v\nu_n(\mathrm{d}y,\mathrm{d}z)\right)\\
&\le& C\left(n^{-{\delta }/{2} }+n^{-v/2} \sup_n \mathbb E\left|\log W_n\right|^v\right)\\
&\le& Cn^{-{\delta_v }/{2} },
\end{eqnarray*}
which gives \eqref{s401}.
\end{proof}
\end{lem}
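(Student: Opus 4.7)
The plan is to exploit the independence between the pair $(S_m, \log W_m)$ and the increment $S_n - S_m$. Writing $\nu_n$ for the joint law of $(S_m/(\sqrt{n}\sigma), \log W_m/(\sqrt{n}\sigma))$, after conditioning on this pair at value $(y,z)$ the event in \eqref{s401} becomes the two-sided constraint $x - a_n - y - z \le (S_n - S_m)/(\sqrt{n}\sigma) \le x - y$, whose probability is exactly $F_{n-m}(A) - F_{n-m}(B)$ with $A = \frac{\sqrt n}{\sqrt{n-m}}(x-y)$ and $B = \frac{\sqrt n}{\sqrt{n-m}}(x-a_n-y-z)$ (provided $a_n+z \ge 0$, otherwise the integrand vanishes). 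This reduces the whole probability to an integral of $F_{n-m}(A) - F_{n-m}(B)$ against $\nu_n$.

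Next I would estimate $F_{n-m}(A) - F_{n-m}(B)$ in two complementary ways. First, adding and subtracting $\Phi$ and invoking the uniform Berry--Esseen bound of Lemma \ref{s4l1}(a) yields $|F_{n-m} - \Phi| \le Cn^{-\delta/2}$, while the mean value theorem applied to $\Phi$, together with $\|\varphi\|_\infty < \infty$, gives $|\Phi(A) - \Phi(B)| \le C|a_n + z|$. Second, the probability in question is trivially bounded by $1$. Combining these two bounds and using the elementary inequality $\min\{1, Cu\} \le C u^v$ for $u \ge 0$ and $v \in (0,1]$ produces the pointwise estimate
\[
|F_{n-m}(A) - F_{n-m}(B)| \le C\bigl(n^{-\delta/2} + |z|^v + a_n^v\bigr),
\]
and the particular choice $a_n = n^{-\delta/(2v)}$ is calibrated so that $a_n^v = n^{-\delta/2}$ exactly.

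Integrating this pointwise bound against $\nu_n$ reduces matters to controlling the quantity $\int |z|^v\,\nu_n(\mathrm{d}y,\mathrm{d}z) = (\sqrt{n}\sigma)^{-v}\,\mathbb E|\log W_m|^v$. Since $v < r$, Jensen's inequality together with the assumed uniform bound $\sup_n \mathbb E|\log W_n|^r < \infty$ gives $\mathbb E|\log W_m|^v \le C$, so this integral is of order $n^{-v/2}$. Adding the three contributions then produces the total bound $Cn^{-\delta_v/2}$ with $\delta_v = \min\{a-2,v\}$, which is \eqref{s401}. The symmetric bound \eqref{s402} follows by the identical argument after interchanging the roles of the lower and upper endpoints of the induced constraint.

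The one delicate point is the triple calibration between $a_n$, the Berry--Esseen error $n^{-\delta/2}$, and the loss coming from $\mathbb E|\log W_m|^v$: all three must sit at the common scale $n^{-\delta_v/2}$, and this forces both the choice $a_n = n^{-\delta/(2v)}$ and the restriction $v \le 1$ (needed for the concavity-style inequality $\min\{1,u\} \le u^v$). Verifying that the $a_n + z < 0$ region contributes nothing and that the scaling $m = [\sqrt n]$ keeps the ratio $\sqrt n/\sqrt{n-m}$ uniformly bounded (so that the mean value theorem is applied with a harmless prefactor) are routine bookkeeping.
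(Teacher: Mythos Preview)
Your proposal is correct and follows essentially the same route as the paper's own proof: the same conditioning on $(S_m,\log W_m)$ via the measure $\nu_n$, the same integral representation in terms of $F_{n-m}(A)-F_{n-m}(B)$, the same Berry--Esseen plus mean-value-theorem estimate, and the same integration of $|z|^v$ against $\nu_n$. Your write-up is in fact slightly more explicit about why the concavity bound $\min\{1,u\}\le u^v$ requires $v\le 1$ and about the calibration $a_n^v=n^{-\delta/2}$, points the paper leaves implicit.
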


\begin{lem}	\label{sl403}
Assume that $\mathbb E\left(\log m_0(\mathbf t)\right)^{a} < \infty$ for some $a>2$ and $\sup_n \mathbb E\left|\log W_n\right|^{r} < \infty$ for some $r>0$.  Let $v\in(0,1]$ be a constant satisfying $v<r$.  Set   $\delta=\min\{a-2,1\}$,  $m=\left[\sqrt n\right]$ and $\alpha _n = n^{-{\delta }/{(2v)} }|x|^\beta$ with $\beta \in \left(0,1\right)$. Then for $|x|\ge 1$, we have 
\begin{equation}\label{s406}
\mathbb P\left(\frac{\log W_m }{\sqrt{n }\sigma } +\frac{S_n}{\sqrt{n }\sigma } \ge x-\alpha_n, \frac{S_n}{\sqrt{n }\sigma }\le x  \right) \le Cn^{-{\delta_v }/{2} }(1+|x|)^{-\lambda } ,
\end{equation}
\begin{equation}\label{s407}
\mathbb P\left(\frac{\log W_m }{\sqrt{n }\sigma } +\frac{S_n}{\sqrt{n }\sigma} \le x+\alpha_n, \frac{S_n}{\sqrt{n }\sigma }\ge x \right ) \le Cn^{-{\delta_v }/{2} }(1+|x|)^{-\lambda } , 
\end{equation}
where $\delta_v=\min\{a-2,v\}$ and $\lambda = \min \left \{ r-v,a-\beta v,a(1-\frac{v}{r} ) \right \}$.
\end{lem}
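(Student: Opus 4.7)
The plan is to adapt the proof of Lemma \ref{s4l03} while carefully tracking the $|x|$-dependence. I would start from the same representation
\begin{equation*}
\mathbb{P}\left(\frac{\log W_m}{\sqrt{n}\sigma} + \frac{S_n}{\sqrt{n}\sigma} \geq x - \alpha_n, \frac{S_n}{\sqrt{n}\sigma} \leq x\right) = \int \mathbf{1}_{\{\alpha_n + z \geq 0\}}\left[F_{n-m}(A) - F_{n-m}(B)\right] \nu_n(\mathrm{d}y, \mathrm{d}z),
\end{equation*}
with $A = \frac{\sqrt{n}}{\sqrt{n-m}}(x - y)$, $B = A - \frac{\sqrt{n}}{\sqrt{n-m}}(\alpha_n + z)$, and $\nu_n$ the joint law of $\left(S_m/(\sqrt{n}\sigma), \log W_m/(\sqrt{n}\sigma)\right)$. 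To extract a factor $(1+|x|)^{-\lambda}$, I would split $\mathbb{R}^2$ into three pieces (for a small constant $c \in (0, 1)$): the core $\Omega_1 = \{|y| \leq c|x|, |z| \leq c|x|\}$, the $z$-tail $\Omega_2 = \{|y| \leq c|x|, |z| > c|x|\}$, and the $y$-tail $\Omega_3 = \{|y| > c|x|\}$. Each region will produce one of the three exponents in $\lambda$.

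On $\Omega_1$, for $n$ large both $|A|, |B| \geq c'|x|$ for some $c' > 0$, so the non-uniform Berry--Esseen bound (Lemma \ref{s4l1}(a)) yields $|F_{n-m}(A) - \Phi(A)| + |F_{n-m}(B) - \Phi(B)| \leq Cn^{-\delta/2}(1+|x|)^{-a}$. Combining $|A - B| \leq C(\alpha_n + |z|)$ with $\max_{[A, B]}\varphi \leq \varphi(c'|x|) \leq C_a(1+|x|)^{-a}$ (super-polynomial decay of $\varphi$) and the elementary inequality $\min\{1, u\} \leq u^v$ gives
\begin{equation*}
|\Phi(A) - \Phi(B)| \leq C(1+|x|)^{-a}\left(\alpha_n^v + |z|^v\right).
\end{equation*}
Since $\alpha_n^v = n^{-\delta/2}|x|^{\beta v}$, the $\alpha_n^v$-part contributes $Cn^{-\delta/2}(1+|x|)^{-(a - \beta v)}$, producing the exponent $a - \beta v$; the $|z|^v$-part integrates to $Cn^{-v/2}(1+|x|)^{-a}$, which is of the required order since $v \geq \delta_v$. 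On $\Omega_2$, one uses $|F_{n-m}(A) - F_{n-m}(B)| \leq Cn^{-\delta/2} + C(\alpha_n + |z|)^v$ together with the key bound $|z|^v \mathbf{1}_{\{|z| > c|x|\}} \leq (c|x|)^{v - r}|z|^r$ and $\sup_n \mathbb{E}|z|^r = O(n^{-r/2})$, which yields a contribution of order $Cn^{-r/2}(1+|x|)^{-(r - v)}$ and hence the exponent $r - v$.

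On $\Omega_3$, I would apply Hölder's inequality with conjugate exponents $r/v$ and $r/(r - v)$:
\begin{equation*}
\int_{\Omega_3} |z|^v \, \nu_n(\mathrm{d}y, \mathrm{d}z) \leq \left(\mathbb{E}|z|^r\right)^{v/r} \mathbb{P}\left(|y| > c|x|\right)^{1 - v/r}.
\end{equation*}
Combining the Rosenthal bound $\mathbb{E}|S_m|^a \leq Cm^{a/2}$ with Markov's inequality gives $\mathbb{P}(|y| > c|x|) \leq Cn^{-a/4}(1+|x|)^{-a}$, so the $\Omega_3$ contribution is of order $Cn^{-v/2 - a(1 - v/r)/4}(1+|x|)^{-a(1 - v/r)}$, producing the exponent $a(1 - v/r)$. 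Summing the three contributions (plus smaller error terms from the $Cn^{-\delta/2}$ and $\alpha_n^v$ parts in the tails) gives \eqref{s406}, and \eqref{s407} follows by an entirely symmetric argument. The main obstacle is the bookkeeping required to verify that each of the three $n$-exponents meets the target rate $n^{-\delta_v/2}$ (this uses $\delta \geq v\delta_v$, $r \geq \delta_v$, and $a \geq 2\delta_v$) and to carry out the Hölder interpolation cleanly; the subtle point is that the factor $(1+|x|)^{-(a - \beta v)}$ emerges only by combining the super-polynomial decay of $\varphi$ with the $v$-power trick, rather than using either alone.
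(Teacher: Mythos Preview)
Your approach is correct and arrives at the same three exponents $r-v$, $a-\beta v$, and $a(1-v/r)$, but the organization differs from the paper's. The paper does not split into the regions $\Omega_1,\Omega_2,\Omega_3$; instead it writes
\[
F_{n-m}(\mathcal A)-F_{n-m}(\mathcal B)
=\big[F_{n-m}(\mathcal A)-\Phi(\mathcal A)\big]+\big[\Phi(\mathcal A)-\Phi(\mathcal B)\big]+\big[\Phi(\mathcal B)-F_{n-m}(\mathcal B)\big],
\]
applies the non-uniform Berry--Esseen bound globally to the first and third brackets, and for the middle bracket observes that when $(x-y)(x-y-\alpha_n-z)\ge 0$ one has $\varphi(\zeta)\le\max\{\varphi(\mathcal A),\varphi(\mathcal B)\}$, so that the Gaussian weights $e^{-\frac{v}{2}(x-y)^2}$ and $e^{-\frac{v}{2}(x-\alpha_n-y-z)^2}$ survive; the complementary set is absorbed into $\{|y|>\tfrac12|x|\}\cup\{|\alpha_n+z|>\tfrac12|x|\}$. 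The H\"older step with conjugate exponents $r/v$ and $r/(r-v)$ is then applied to $\int e^{-c(x-y)^2}|z|^v\,\nu_n$, which is where $a(1-v/r)$ appears; the exponent $r-v$ arises from the $|z|$-tail term and from the analogue of $I_{n4}$ when $\lambda_1=r$. Your route replaces the Gaussian weights by hard cutoffs, so that on $\Omega_1$ the super-polynomial decay of $\varphi$ directly gives the $(1+|x|)^{-a}$ factor, while the H\"older interpolation is shifted to $\Omega_3$ and the $r-v$ exponent comes cleanly from $\Omega_2$ via $|z|^v\mathbf 1_{\{|z|>c|x|\}}\le (c|x|)^{v-r}|z|^r$. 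Both arguments rely on the same ingredients (Lemma~\ref{s4l1}(a), the $v$-power trick $\min\{1,u\}\le u^v$, and H\"older with $r/v$); your decomposition is a bit more elementary in that it avoids tracking the exponential weights through the integrals, while the paper's version makes the role of $\varphi(\mathcal A),\varphi(\mathcal B)$ more explicit. One small point worth tightening in your write-up: the claim $|A|,|B|\ge c'|x|$ on $\Omega_1$ needs $\alpha_n\le c''|x|$, which holds once $n$ is large enough (uniformly in $|x|\ge 1$ since $\alpha_n\le|x|^\beta\le|x|$); for the finitely many small $n$ the bound is trivial.
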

\begin{proof}
We just prove \eqref{s406}. The proof of \eqref{s407} is similar. The proof is similar to that of Lemma \ref{s4l03}. Similarly to \eqref{s404}, we have 
\begin{eqnarray}\label{s408}
&\quad&\mathbb P\left(\frac{\log W_m }{\sqrt{n }\sigma } +\frac{S_n}{\sqrt{n }\sigma } \ge x-\alpha_n, \frac{S_n}{\sqrt{n }\sigma }\le x  \right)\nonumber\\
&=&\int \mathbf 1_{\left\{\alpha_n-z \ge0\right\}}\left[F_{n-m}\left(\mathcal A\right)-F_{n-m}\left(\mathcal B\right)\right]\nu_n(\mathrm{d}y,\mathrm{d}z),
\end{eqnarray}
where $\mathcal A=\frac{\sqrt{n} }{\sqrt{n-m} }(x-y) $ and $\mathcal B=\frac{\sqrt{n} }{\sqrt{n-m} }(x-\alpha_n-y-z)$.
By Lemma \ref{s4l1}(a), for any $x \in \mathbb R$,
\begin{equation}\label{s409}
\left|F_n(x)-\Phi (x)\right|\le Cn^{-{\delta }/{2} } (1+|x|)^{-a} . 
\end{equation}
By the mean value theorem,  we have
\begin{eqnarray}\label{s409+}
&\quad&\left|\Phi\left(\mathcal A\right)- \Phi \left(\mathcal B\right)\right|\nonumber\\
&\le& \varphi (\zeta )\frac{\sqrt{n} }{\sqrt{n-m} } |\alpha_n+z|\nonumber\\
&\le&C\varphi (\zeta )|\alpha_n+z|,
\end{eqnarray}
where $\zeta$ takes value between $\mathcal A$ and $\mathcal B$. Notice that 
\begin{equation}\label{s411}
\varphi (\zeta)\le\max\left \{ \varphi \left(\mathcal A\right),\varphi \left(\mathcal B\right) \right \} 
\end{equation}
whenever $(x-y)(x-y-\alpha_n-z)\ge 0$, so that
combining \eqref{s409+} and \eqref{s411} gives 
\begin{eqnarray}\label{s410}
\left|\Phi \left(\mathcal A\right)-\Phi  \left(\mathcal B \right)\right|
\le C\min\left\{1,\;\left(e^{-\frac{1}{2}(x-y)^{2}  }+e^{-\frac{1}{2}(x-y-\alpha_n-z)^{2}  } \right)|\alpha_n+z|\right\}.
\end{eqnarray}
Moreover,
notice that 
\begin{equation}\label{eset}
\{(x-y)(x-y-\alpha_n-z)<0\}\subset\{|y|>\frac{1}{2}|x|\}\cup \{|\alpha_n+z|>\frac{1}{2}|x|\}.
\end{equation}
Thus,
combining \eqref{s408} with \eqref{s409} and \eqref{s410}, and using \eqref{eset},  we see that
\begin{eqnarray}\label{s413}
&\quad&\mathbb P\left(\frac{\log W_m }{\sqrt{n }\sigma } +\frac{S_n}{\sqrt{n }\sigma } \ge x-\alpha_n, \frac{S_n}{\sqrt{n }\sigma }\le x  \right)\nonumber\\
&\le& C(I_{n1}+I_{n2}+I_{n3}+I_{n4}+I_{n5}+I_{n6}),
\end{eqnarray}
where
\begin{eqnarray*}
I_{n1}&=&n^{-{\delta }/{2} } \int (1+|x-y|)^{-a}   \nu_n(\mathrm{d}y,\mathrm{d}z),\\
I_{n2}&=&n^{-{\delta }/{2} } \int (1+|x-\alpha_n-y-z|)^{-a}  \nu_n(\mathrm{d}y,\mathrm{d}z),\\
I_{n3}&=&\int e^{-\frac{v}{2}(x-y)^{2}  } |\alpha_n+z|^v \nu_n(\mathrm{d}y,\mathrm{d}z),\\
I_{n4}&=&\int e^{-\frac{v}{2}(x-\alpha_n-y-z)^{2}  }  |\alpha_n+z|^v\nu_n(\mathrm{d}y,\mathrm{d}z),\\
I_{n5}&=&\int_{|y|>\frac{1}{2}|x|}\nu_n(\mathrm{d}y,\mathrm{d}z),\\
I_{n6}&=&\int_{|\alpha_n+z|>\frac{1}{2}|x|}\nu_n(\mathrm{d}y,\mathrm{d}z).
\end{eqnarray*}

We first deal with $I_{n5}$ and $I_{n6}$. Let $c$ be a positive constant. We can calculate that by \eqref{s409},
\begin{eqnarray}\label{s414}
\int _{|y|>c|x|}\nu_n(\mathrm{d}y,\mathrm{d}z )
&=&\mathbb P\left(\frac{|S_m|}{\sqrt{n}   }>c|x| \right)\nonumber\\
&=&1-F_m\left(c\frac{\sqrt{n} }{\sqrt{m} } |x| \right)+F_m\left(-c\frac{\sqrt{n} }{\sqrt{m} } |x| \right)\nonumber\\
&\le& 2\left[1-\Phi \left(c\frac{\sqrt{n} }{\sqrt{m} } |x| \right)\right]+\left|F_m\left(c\frac{\sqrt{n} }{\sqrt{m} } |x| \right)-\Phi \left(c\frac{\sqrt{n} }{\sqrt{m} } |x| \right)\right|\nonumber\\
&\quad&+\left|F_m\left(-c\frac{\sqrt{n} }{\sqrt{m} } |x| \right)-\Phi \left(-c\frac{\sqrt{n} }{\sqrt{m} } |x| \right)\right|\nonumber\\
&\le& 2\left[1-\Phi \left(c\frac{\sqrt{n} }{\sqrt{m} } |x| \right)\right] + 2m^{-{\delta }/{2} } \left(1+c\frac{\sqrt{n} }{\sqrt{m} } |x|\right)^{-a} \nonumber\\
&\le& Cn^{-\delta/2}(1+|x|)^{-a }.  
\end{eqnarray}
Thus 
\begin{equation}\label{ein5}
I_{n5}\leq C n^{-\delta/2}(1+|x|)^{-a }.
\end{equation}
Since $\sup_n\mathbb E\left|\log W_n\right|^r<\infty$ for some $r>0$, we have
\begin{eqnarray}\label{s415}
\int _{|z|>c|x|}\nu_n(\mathrm{d}y, \mathrm{d}z)
&=&\mathbb P\left(\left|\frac{\log W_m}{\sqrt{n}  } \right|> c|x|\right)\nonumber\\
&\le& C\sup_n\mathbb E\left|\log W_n\right|^{r} n^{-{r}/{2} } |x|^{-r} \nonumber\\
&\le& C n^{-r/2} (1+|x|)^{-r}.
\end{eqnarray}
On the set $ \{|\alpha_n+z|>\frac{1}{4}|x|\}$, we have $|z|>\frac{1}{4}|x|-\alpha_n\ge \kappa |x|$ for some constant $\kappa>0$. From \eqref{s415} we derive
\begin{equation}\label{ein6}
I_{n6}\le\int_{|\alpha_n+z|>\frac{1}{4} |x|}\nu_n(\mathrm{d}y, \mathrm{d}z)\le \int _{|z|>\kappa |x|}\nu_n(\mathrm{d}y, \mathrm{d}z)\leq C n^{-r/2}(1+|x|)^{-r}.
\end{equation}

Next we consider $I_{n1}$ and $I_{n2}$.
For $I_{n1}$, by \eqref{s414}, 
\begin{eqnarray}\label{s416}
I_{n1} 
&\le& n^{-{\delta }/{2} }  \left[\int _{|y|\le\frac{1}{2}|x| }(1+|x-y|)^{-a}\nu_n(\mathrm{d}y,\mathrm{d}z) +\int _{|y|>\frac{1}{2}|x| }\nu_n(\mathrm{d}y,\mathrm{d}z)\right]\nonumber\\
&\le& Cn^{-{\delta }/{2} } \left[\left(1+\frac{1}{2}|x| \right)^{-a} +(1+|x|)^{-a}\right]\nonumber\\
&\le& Cn^{-{\delta }/{2} }(1+|x|)^{-a}.
\end{eqnarray}
Similarly, for $I_{n2}$, by \eqref{s414} and \eqref{ein6},
\begin{eqnarray}\label{s417}
I_{n2} 
&\le& n^{-{\delta }/{2} }  \left[\int _{|y+z+\alpha_n|\le\frac{1}{2}|x| }\left(1+|x-\alpha_n-y-z|\right)^{-a }\nu_n(\mathrm{d}y,\mathrm{d}z) \right.\notag\\
&&\left. +\int _{|y|>\frac{1}{4}|x| }\nu_n(\mathrm{d}y,\mathrm{d}z)+\int _{|\alpha_n+z|>\frac{1}{4}|x| }\nu_n(\mathrm{d}y,\mathrm{d}z)\right]\nonumber\\
&\le& Cn^{-{\delta }/{2} } \left[\left(1+\frac{1}{2}|x| \right)^{-a } +(1+|x|)^{-a }+\left(1+|x|\right)^{-r  }\right]\nonumber\\
&\le& Cn^{-{\delta }/{2} } \left(1+|x|\right)^{-\lambda _1} ,
\end{eqnarray}
where $\lambda_1 =\min \left \{ r,a \right \}$.

Finally, we deal with $I_{n3}$ and $I_{n4}$. For any constant $c>0$, using \eqref{s414}, we deduce that
\begin{eqnarray}\label{s418}
&\quad&\int e^{-c(x-y)^2}\nu_n(\mathrm{d}y,\mathrm{d}z) \nonumber\\
&\le& \int _{|y| \le \frac{1}{2} |x|}e^{-c(x-y)^2} \nu_n(\mathrm{d}y,\mathrm{d}z)+\int _{|y| > \frac{1}{2} |x|} \nu_n(\mathrm{d}y,\mathrm{d}z)\nonumber\\
&\le& e^{-\frac{c}{4}x^2 } + C(1+|x|)^{-a}.
\end{eqnarray}
It follows  by Hölder inequality that  for $s=\frac{r}{r-v}$, 
\begin{eqnarray}\label{s419}
&\quad&\int e^{-c(x-y)^2}|z|^v\nu_n(\mathrm{d}y,\mathrm{d}z) \nonumber\\
&\le& \left(\int e^{-cs(x-y)^2} \nu_n(\mathrm{d}y,\mathrm{d}z) \right)^{{1}/{s} } \left(\int |z|^r\nu_n(\mathrm{d}y,\mathrm{d}z) \right)^{{v}/{r} }\nonumber\\
&\le& \left[e^{-\frac{cs}{4}x^2} + C(1+|x|)^{-a} \right]^{{1}/{s} }\left(\mathbb E\left|\frac{\log W_m}{\sqrt{n}\sigma  } \right|^r\right)^{{v}/{r} }\nonumber\\
&\le& Cn^{-{v}/{2} }\left[e^{-\frac{cs}{4}x^2 } + C(1+|x|)^{-a} \right]^{{1}/{s} }\nonumber\\
&\le& Cn^{-{v}/{2} }(1+|x|)^{-a/{s} } .
\end{eqnarray}
Thus, by \eqref{s418} and \eqref{s419},
\begin{eqnarray}\label{s420}
I_{n3} 
&\le& C\alpha_n^v\int e^{-\frac{v}{2} (x-y)^2}\nu_n(\mathrm{d}y,\mathrm{d}z) +\int e^{-\frac{v}{2} (x-y)^2}|z|^v\nu_n(\mathrm{d}y,\mathrm{d}z)\nonumber\\
&\le& C\left[n^{-{\delta }/{2} } (1+|x|)^{-(a-\beta)}+n^{-{v}/{2} } (1+|x|)^{-a(1-\frac{v}{r} )} \right] \nonumber\\
&\le& Cn^{-{\delta_v }/{2} }(1+|x|)^{-\lambda} .
\end{eqnarray}
For $I_{n4}$, similarly, we can obtain
\begin{eqnarray}\label{s421}
I_{n4}
&\le& C\left[n^{-{\delta }/{2} } (1+|x|)^{-(\lambda_1-\beta) }+n^{-{v}/{2} } (1+|x|)^{-\lambda_1(1-\frac{v}{r} )} \right] \nonumber\\
&\le& Cn^{-{\delta_v }/{2} }(1+|x|)^{-\lambda} .
\end{eqnarray}
Combing \eqref{s413} with \eqref{ein5}, \eqref{ein6}, \eqref{s416}, \eqref{s417}, \eqref{s420} and \eqref{s421} leads to \eqref{s406}.
\end{proof}

\section{Proofs of theorems}\label{BS5}
In this section, we shall gives   proofs of the main theorems associated to the convergence rates in the central limit theorem on $\log Z_n$ which are introduced in Section \ref{BS1}.

\begin{proof}[Proof of Theorem \ref{tt2}]
By  Theorem \ref{lm}, we have $\sup_n\mathbb E\left| \log W_n  \right|^{r_1}<\infty$ for some $r_1\in(r,a)$ (we set $r=1$ if the condition (i) holds). 
Set $m=[\sqrt n]$ and $a_n=n^{-{\min\{a-2,1\}}/{(2v)}}$, where  $v=\min\{r,1\}$.
Observe that 
\begin{eqnarray*}
	&&\mathbb P\left(\frac{\log Z_n -n\mu}{\sqrt{n}\sigma }\leq x\right)\\
	&=&F_n(x)+\mathbb P\left(\frac{\log Z_n -n\mu}{\sqrt{n}\sigma }\leq x,\frac{S_n}{\sqrt n\sigma }>x \right)-\mathbb P\left(\frac{\log Z_n -n\mu}{\sqrt{n}\sigma }> x,\frac{S_n}{\sqrt n\sigma }\le x\right).
\end{eqnarray*}
According to Lemma \ref{s4l1}(a), we have \eqref{s403}. It remains  to prove 
\begin{equation}\label{s501}
	\sup_{x\in \mathbb R}\mathbb P\left(\frac{\log Z_n -n\mu}{\sqrt{n}\sigma }\ge x,\frac{S_n}{\sqrt n\sigma }\le x\right)\le Cn^{-{\delta }/{2}},
\end{equation}
\begin{equation}\label{s502}
	\sup_{x\in \mathbb R}\mathbb P\left(\frac{\log Z_n -n\mu}{\sqrt{n}\sigma }\le x,\frac{S_n}{\sqrt n\sigma }\ge x\right)\le Cn^{-{\delta }/{2}}  .
\end{equation}
Then we only prove \eqref{s501}, and the proof of \eqref{s502} is similar.  For $m<n$, denote
\begin{equation*}
	D_{m,n}=\frac{\log W_n-\log W_m}{\sqrt n \sigma }.
\end{equation*}
Then 
\begin{eqnarray}\label{s503}
	&&\mathbb P\left(\frac{\log Z_n -n\mu}{\sqrt{n}\sigma }\ge x,\frac{S_n}{\sqrt n\sigma }\le x\right)
	\notag\\
	&\le &
	\mathbb P\left(\frac{\log W_m}{\sqrt{n}\sigma }+\frac{S_n}{\sqrt n\sigma }\ge x-a_n,\frac{S_n}{\sqrt n\sigma }\le x\right)+\mathbb P\left ( \left | D_{m,n} \right |>a_n  \right ) .
\end{eqnarray}
 Without loss of generality, we  can  think that $0<\alpha<\min\{1, p\varepsilon, 2\varepsilon\}$. By Proposition  \ref{s3l02},
\begin{eqnarray*}
	\mathbb P\left ( \left | D_{m,n} \right | >a_n \right ) 
	&=&\mathbb P\left ( \left | \log W_n-\log W_m \right | >\sqrt n a_n\sigma\right )\nonumber\\
	&\le& C\left [ \left ( \sqrt{n}a_n\sigma   \right ) ^{-\alpha } \rho ^{m}  +m^{-r} \right ] \nonumber\\
	&\le& Cn^{-{\delta}/{2} }.
\end{eqnarray*}
According to Lemma \ref{s4l03}, we   know that the first term in \eqref{s503} is bounded by $Cn^{-\min\{{a-2,v }\}/{2} }$, which completes the proof.
\end{proof}
	
\begin{lem}\label{5llw}
Assume that $\mathbb E\left(\log m_0(\mathbf t)\right)^{a} < \infty$ for some $a>2$ and $\sup_n \mathbb E \left|\log W_n\right|^r < \infty$ for some $r>0$.  Let $v\in(0,1]$ be a constant satisfying $v<r$. Set $\delta=\min\{a-2,1\}$, $m=[\sqrt{n}]$ and $\alpha_n=n^{-\delta/(2v)}|x|^\beta$, with $\beta\in(0,1)$. Put $\delta_v=\min\{a-2,v\}$. If
\begin{equation}\label{5lw}
\mathbb E|\log W_n-\log W_m|^q=O(m^{-\gamma})
\end{equation}
for some $q>0$ and $\gamma>0$ satisfying $\gamma+q(1-\delta/v)\geq \delta_v$, then for  $|x|\geq 1$,
\begin{equation}\label{5e1}
\mathbb P\left(\frac{\log Z_n -n\mu}{\sqrt{n}\sigma }\ge x,\frac{S_n}{\sqrt n\sigma }\le x\right)\le C n^{-\delta_v/2}(1+|x|)^{-\lambda},
\end{equation}
\begin{equation}\label{5e2}
\mathbb P\left(\frac{\log Z_n -n\mu}{\sqrt{n}\sigma }\le x,\frac{S_n}{\sqrt n\sigma }\ge x\right)\le C n^{-\delta_v/2}(1+|x|)^{-\lambda},
\end{equation}
where $\lambda=\min\{\beta q, r-v,a-\beta v, a(1-\frac{v}{r})\}$.
\end{lem}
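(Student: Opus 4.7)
The plan is to mirror the proof of Theorem \ref{tt2}, but tracking the dependence on $|x|$ throughout, and to use the $L^q$ decay rate \eqref{5lw} in place of the probability bound from Proposition \ref{s3l02}. I would prove only \eqref{5e1}, since \eqref{5e2} follows by a symmetric argument. The starting point is the identity $\log Z_n - n\mu = \log W_m + (\log W_n - \log W_m) + S_n$. Setting $D_{m,n}=(\log W_n-\log W_m)/(\sqrt n\sigma)$, I would split
\begin{align*}
&\mathbb P\left(\tfrac{\log Z_n -n\mu}{\sqrt{n}\sigma }\ge x,\tfrac{S_n}{\sqrt n\sigma }\le x\right)\\
&\qquad\le \mathbb P\left(\tfrac{\log W_m}{\sqrt n\sigma}+\tfrac{S_n}{\sqrt n\sigma}\ge x-\alpha_n,\tfrac{S_n}{\sqrt n\sigma}\le x\right)+\mathbb P(|D_{m,n}|>\alpha_n).
\end{align*}

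For the first probability I would invoke Lemma \ref{sl403} directly; its hypotheses are exactly the ones in force, and its conclusion gives the bound $Cn^{-\delta_v/2}(1+|x|)^{-\lambda'}$ with $\lambda'=\min\{r-v,\,a-\beta v,\,a(1-v/r)\}$, which is $\geq \lambda$. The delicate term is the tail probability $\mathbb P(|D_{m,n}|>\alpha_n)$, since now the threshold $\alpha_n=n^{-\delta/(2v)}|x|^\beta$ depends on $x$ and must therefore produce both the $n^{-\delta_v/2}$ decay and the $(1+|x|)^{-\lambda}$ factor.

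For this term I would use Markov's inequality at level $q$, which converts the $L^q$ hypothesis \eqref{5lw} into
\begin{equation*}
\mathbb P(|D_{m,n}|>\alpha_n)\le \frac{\mathbb E|\log W_n-\log W_m|^q}{(\sqrt n\,\sigma\alpha_n)^q}\le C\,\frac{m^{-\gamma}}{n^{q/2}\alpha_n^q}.
\end{equation*}
Substituting $\alpha_n$ and using $m=[\sqrt n]$, the right-hand side becomes $C\,n^{-(\gamma+q(1-\delta/v))/2}|x|^{-\beta q}$. The assumption $\gamma+q(1-\delta/v)\geq\delta_v$ is precisely what makes the $n$-exponent $\geq\delta_v/2$, while the $|x|$-exponent $\beta q$ is absorbed by the $\beta q$ entry in the definition of $\lambda$. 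Combining the two estimates gives the bound $Cn^{-\delta_v/2}(1+|x|)^{-\lambda}$ claimed in \eqref{5e1}.

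The main obstacle is less a single technical hurdle than a bookkeeping one: one has to pick $\alpha_n$ of the form $n^{-\delta/(2v)}|x|^\beta$ so that simultaneously (a) Lemma \ref{sl403} can absorb the shift $\alpha_n$ into the exponent $a-\beta v$, and (b) the $L^q$ inequality for $D_{m,n}$, combined with the rate $m^{-\gamma}$, yields the correct $x$-decay $|x|^{-\beta q}$ and the correct $n$-decay $n^{-\delta_v/2}$. The algebraic compatibility of these requirements is encoded in the hypothesis $\gamma+q(1-\delta/v)\geq\delta_v$ and in the minimum defining $\lambda$; once this is verified, the proof reduces to assembling the two estimates.
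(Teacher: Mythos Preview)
Your proposal is correct and follows essentially the same argument as the paper: the same splitting via $D_{m,n}$ and the threshold $\alpha_n$, Lemma \ref{sl403} for the shifted term, and Markov's inequality at level $q$ combined with \eqref{5lw} for the tail of $D_{m,n}$, with the algebraic condition $\gamma+q(1-\delta/v)\geq\delta_v$ controlling the $n$-exponent. The only cosmetic difference is that the paper writes the threshold as $\sqrt n\,\sigma\alpha_n=n^{(v-\delta)/(2v)}\sigma|x|^\beta$ before applying Markov, whereas you factor out $n^{q/2}$ first; the resulting exponents are identical.
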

\begin{proof}
We just prove \eqref{5e1}, and \eqref{5e2} can be proved similarly.  It can be seen that  \eqref{s503}  still holds with $\alpha_n$ in place of $a_n$. By \eqref{5lw}, 
\begin{eqnarray}\label{5e3}
\mathbb P\left ( \left | D_{m,n} \right | >\alpha_n \right ) 
	&=&\mathbb P\left ( \left | \log W_n-\log W_m \right | >n^{(v-\delta)/(2v)}\sigma|x|^\beta\right )\nonumber\\
	&\le& C\frac{\mathbb E|\log W_n-\log W_m|^q}{n^{q(v-\delta)/(2v)}|x|^{\beta q}}\nonumber\\
&\le& C n^{-\frac{\gamma}{2}-\frac{q(v-\delta)}{2v}}	(1+|x|)^{-\lambda}\nonumber\\
 &\le&C n^{-\delta_v/2}(1+|x|)^{-\lambda}.
\end{eqnarray}
Moreover, we see that    \eqref{s406} holds from Lemma \ref{sl403}. The proof will be finished  by  combining \eqref{s503} (with $\alpha_n$ in place of $a_n$) with \eqref{5e3} and \eqref{s406}.
\end{proof}

\begin{proof}[Proof of Theorem \ref{tt20}]
For $|x|\leq1$, noticing that $(1+|x|)^{-\lambda}\ge2^{-\lambda}$, by Theorem \ref{tt2}, we deduce that
\begin{equation*} 
\left | \mathbb{P}\left ( \frac{ \log_{}{Z _{n}  }  -n\mu   }{\sqrt{n} \sigma  }\le x \right ) -\Phi \left ( x \right )  \right |\le Cn^{-{\delta }/{2} }\le Cn^{-{\delta }/{2}}(1+|x|)^{-\lambda}.
	\end{equation*}	
	
Now we consider $|x|\ge 1$. Similarly to the proof of Theorem \ref{tt2}, it suffices to show that   \eqref{5e1} and \eqref{5e2} hold with $\delta_v=\delta=\min\{a-2,1\}$.    Set $m=[\sqrt{n}]$ and $\alpha_n=n^{-\delta/2}|x|^\beta$, with $\beta\in(0,1)$. We shall use  Lemma \ref{5llw}.

We first  work on the assertion (a). Take $\lambda\in(0, a)$. Under the conditions of the assertion (a), by Theorem \ref{lm}, we have $\sup_n \mathbb E|\log W_n|^{r^2}<\infty$ for all $r>1$. By Proposition   \ref{s3l03}, 
$$\mathbb E|\log W_n-\log W_m|^r=O(m^{-(r-1)}).$$
Applying Lemma \ref{5llw} to $q=r$, $v=1$ and $\gamma=r-1$, and taking $\beta\in(0,1)$ small enough and  $r>1$ large enough such that 
\begin{equation*}
\begin{cases}
r-1+r(1-\delta)\geq \delta,\\
\lambda<\min\left\{\beta r, r^2-1, a-\beta, a(1-\frac{1}{r^2})\right\},
\end{cases}
\end{equation*}
we see that  \eqref{5e1} and \eqref{5e2} hold.

Next we deal with the assertion (b). By Theorem \ref{lm}, there exists $r$ satisfying $\max\{2\delta,1\}\le r<a$ such that $\sup_n \mathbb E|\log W_n|^r<\infty$. By Proposition   \ref{s3l03}, 
$$\mathbb E|\log W_n-\log W_m|=O(m^{-(r-1)}).$$
Noticing that $r\ge 2\delta$, and applying Lemma \ref{5llw} to $q=1$, $v=1$ and $\gamma=r-1$, we see that  \eqref{5e1} and \eqref{5e2} hold for $\lambda <\min\left\{\beta , r-1,a-\beta, a(1-\frac{1}{r})\right\}$.
\end{proof}

\begin{proof}[Proof of Corollary \ref{cobs}]
Set  $m=[\sqrt{n}]$ and $\alpha_n=n^{-\min\{a-2,1\}/(2v)}|x|^\beta$, where $\beta\in(0,1)$   and $v>0$   will be determined later.  The conditions of Corollary \ref{cobs} ensure $\sup_n\mathbb E |\log W_n|^r<\infty$ for $2<r<a^*$, and for any $\gamma>0$,
\begin{equation*} 
\mathbb E|\log W_n-\log W_m|^r=O(m^{-\gamma}).
\end{equation*}

For $\lambda<\lambda_0$, take $v=1$, $\delta=\min\{a-2,1\}$, and  choose $\beta\in(0,1)$ and $0<r<a<a^*$ large enough such that 
$$\lambda <\min\left\{\beta r, r-1, a-\beta, a(1-\frac{1}{r})\right\}.$$
Then take $\gamma>0$  such that $\gamma +r(1-\delta)\ge\delta$.
It follows from Lemma \ref{5llw} that \eqref{5e1} and \eqref{5e2} hold for this $\lambda$ and $\delta_v=\min\{a-2,1\}=\delta$. According to the proof of Theorem \ref{tt20}, we can obtain the desired result.

Now we consider the case that $a^*<\infty$ and $\lambda\in[\lambda_0, a^*)$. Fix $0<\delta<\min\{a^*-2, a^*-\lambda\}$. Choose $\beta\in(0,1)$ and  $\lambda <r<a<a^*$ large enough such that 
\begin{eqnarray*}
\delta< \min\{a-2,v\} \quad\text{and}\quad
\lambda=\min\left\{\beta r, r-v, a-\beta v, a(1-\frac{v}{r})\right\},
\end{eqnarray*}
where $v=r-\lambda$. Set $\delta_v=\min\{a-2,v\}$. Since we can take $\gamma>0$ large enough such that $\gamma +r(1-\min\{a-2,1\}/v)\ge\delta_v$,  by Lemma \ref{5llw}, we derive that \eqref{5e1} and \eqref{5e2} hold for this $\lambda$ and $\delta_v=\min\{a-2,v\}>\delta$.
\end{proof}

	\begin{proof}[Proof of Theorem \ref{tt3}]
	Take $1<r<a$ satisfying $p > \max\{1+\frac{a}{(a-1)\varepsilon},\frac{2a}{(a-1)\varepsilon}\}$.
	By Theorem \ref{lm}, we have $\sup_n \mathbb E|\log W_n|^r<\infty$.
		Set $m=[n^\beta]$ and $a_n=n^{-b}$, where $\beta\in(\frac{1}{2r}, \frac{1}{2})$ and $b>1/2$. Notice that 
		\begin{equation}\label{eo}
			\mathbb P\left(\frac{\log Z_n -n\mu}{\sqrt{n}\sigma }\leq x\right)\left\{
			\begin{array}{l}
				\leq \mathbb P\left(\frac{\log Z_{m} -m\mu}{\sqrt{n}\sigma }+\frac{S_n-S_{m} }{\sqrt{n}\sigma }\leq x+a_n\right)+\mathbb P(|D_{m,n}|>a_n)\\
				\geq \mathbb P\left(\frac{\log Z_{m} -m\mu}{\sqrt{n}\sigma }+\frac{S_n-S_{m}}{\sqrt{n}\sigma }\leq x-a_n\right)-\mathbb P(|D_{m,n}|>a_n)
			\end{array}.
			\right.
		\end{equation}
		It suffices to show that as $n$ tends to infinity, 
		\begin{equation}\label{t4ee1}
			\sqrt{n}\mathbb P(|D_{m,n}|>a_n)\rightarrow 0
		\end{equation}
		and
		\begin{eqnarray}\label{t4ee2}
			&&\mathbb P\left(\frac{\log Z_{m} -m\mu}{\sqrt{n}\sigma }+\frac{S_n-S_{m} }{\sqrt{n}\sigma }\leq x\pm a_n\right)\nonumber\\&=&\Phi(x)-\frac{1}{\sigma\sqrt{n}}\varphi(x)\mathbb E\log W+\frac{1}{\sqrt{n}}Q(x)+\frac{1}{\sqrt{n}}o(1).
		\end{eqnarray}
We can prove \eqref{t4ee2} by a way similar to \cite{HZG}, so here we omit its proof and just prove
 \eqref{t4ee1}. We   think that $0<\alpha<\min\{1, p\varepsilon, 2\varepsilon\}$.
 By Proposition  \ref{s3l02},
		\begin{eqnarray*}
	\sqrt{n}\mathbb P\left ( \left | D_{m,n} \right | >a_n \right ) 
	&=&\sqrt{n}\mathbb P\left ( \left | \log W_n-\log W_m \right | >\sqrt n a_n\sigma\right )\nonumber\\
	&\le& C\sqrt{n}\left [ \left ( \sqrt{n}a_n\sigma   \right ) ^{-\alpha } \rho ^{n^\beta}  +n^{-\beta r} \right ] \nonumber\\
	&\le& Cn^{1/2-\beta r}\to 0 
\end{eqnarray*}
as $n$ tends to infinity, which gives  \eqref{t4ee1}.
	\end{proof}

\end{document}